\pgfplotsset{compat=1.15}
\title{Operator Algebras over the $p$-adic integers}
\theoremstyle{definition}
\newtheorem{theorem}{Theorem}[section]
\newtheorem{definition}[theorem]{Definition}
\newtheorem{example}[theorem]{Example}
\newtheorem{lemma}[theorem]{Lemma}
\newtheorem{proposition}[theorem]{Proposition}
\newtheorem{corollary}[theorem]{Corollary}
\newtheorem{remark}[theorem]{Remark}
\newtheorem*{theorem*}{Theorem}
\newcommand*{\nb}{\nobreakdash}
\newcommand{\und}[1]{\underline{#1}}
\newcommand{\defeq}{\mathrel{:=}} 
\newcommand{\Mm}{\mathfrak{m}}
\newcommand{\an}{\mathrm{an}}
\newcommand{\zZ}{\mathbb{Z}}
\newcommand{\nN}{\mathbb{N}}
\newcommand{\fF}{\mathbb{F}}
\newcommand*{\Z}{\mathbb Z}
\newcommand*{\N}{\mathbb N}
\newcommand*{\C}{\mathbb C}
\newcommand*{\sbe}{\subseteq}
\DeclarePairedDelimiterX{\setgiven}[2]{\{}{\}}{#1\,{:}\,\mathopen{}#2}
\newcommand*{\Mat}{\mathbb M}
\newcommand{\zP}{\mathbb{Z}_{p}}
\newcommand{\Zp}{\mathbb{Z}_{p}}
\newcommand{\qP}{\mathbb{Q}_{p}}
\newcommand{\OO}{\mathcal{O}}
\newcommand{\BB}{\mathcal{B}}
\newcommand{\BUN}{\mathcal{B}_{\leq 1}}
\newcommand{\GG}{\mathcal{G}}
\newcommand{\HH}{\mathcal{H}}
\newcommand{\fP}{\mathbb{F}_p}
\newcommand{\Cest}{C^{\ast}}
\newcommand{\Gcfin}{G_c^{\mathrm{fin}}}
\newcommand*{\braket}[2]{\langle#1\!\mid\!#2\rangle}
    \def\@Obs#1{{\color{blue}{\uline{#1}}}}
    \def\Obs{\@ifstar{\@Obs}{\marginpar[\textcolor{blue}{\((\star)\)}]{\textcolor{blue}{\((\star)\)}}\@Obs}}
\begin{document}

\title{Operator algebras over the \(p\)-adic integers - II}
\author{Alcides Buss}
\email{alcides.buss@ufsc.br}
\address{Departamento de Matem\'atica\\
 Universidade Federal de Santa Catarina\\
 88.040-900 Florian\'opolis-SC\\
 Brazil}

\author{Luiz Felipe Garcia}
\email{lfgarcia98@gmail.com}
\address{Pós-Graduação em Matem\'atica\\
 Universidade Federal de Santa Catarina\\
 88.040-900 Florian\'opolis-SC\\
 Brazil}

\author{Devarshi Mukherjee}
\email{devarshi.mukherjee@maths.ox.ac.uk}
\address{University of Oxford\\ Mathematical Institute \\ Woodstock Rd\\ Oxford OX2 6GG, United Kingdom}

\begin{abstract}
We continue the study of operator algebras over the $p$-adic integers, initiated in our previous work \cite{BGM:padicI}. In this sequel, we develop further structural results and provide new families of examples. We introduce the notion of $p$-adic von Neumann algebras, and analyze those with trivial center, that we call ``factors''. In particular we show that ICC groups provide examples of factors. We then establish a characterization of $p$-simplicity for groupoid operator algebras, showing its relation to effectiveness and minimality. A central part of the paper is devoted to a $p$-adic analogue of the GNS construction, leading to a representation theorem for Banach $^*$-algebras over $\mathbb{Z}_p$. As applications, we exhibit large classes of $p$-adic operator algebras, including residually finite-rank algebras and affinoid algebras with the spectral norm. Finally, we investigate the $K$-theory of $p$-adic operator algebras, including the computation of homotopy analytic \(K\)-theory of continuous \(\zP\)-valued functions on a compact Hausdorff space and the analytic (non-homotopy invariant) \(K\)-theory of certain \(p\)-adically complete Banach algebras in terms of continuous \(K\)-theory. Together, these results extend the foundations of the emerging theory of $p$-adic operator algebras.
\end{abstract}

\thanks{The first author is partially supported by CNPq and Fapesc - Brazil. The third named author was funded by a DFG Eigenestelle (project number 534946574) and the Deutsche Forschungsgemeinschaft (DFG, German Research Foundation) under Germany's Excellence Strategy EXC 2044390685587, Mathematics Münster: Dynamics Geometry Structure and a UK Research and Innovation Horizon Europe Guarantee MSCA Postdoctoral Fellowship award.}

\subjclass[2020]{46L89, 46S10, 19D55}

\keywords{Operator Algebras, $p$-adic integers, $K$-theory, local cyclic homology}

\maketitle


\section{Introduction}

The theory of operator algebras has played a central role in functional analysis, noncommutative geometry, and representation theory. In the classical (Archimedean) setting, $C^*$-algebras and von Neumann algebras provide a rich framework that connects group representations, noncommutative topology, and index theory. Motivated by recent developments in non-Archimedean functional analysis and $p$-adic geometry, a parallel theory of operator algebras over $\mathbb{Z}_p$ and $\mathbb{Q}_p$ has been initiated.  

In our previous paper entitled `\emph{Operator algebras over the $p$-adic integers}'~\cite{BGM:padicI}, we introduced the basic definitions of $p$-adic operator algebras, established their first structural properties, and provided initial examples such as groupoid algebras, rotation algebras, and $p$-adic analogues of Cuntz algebras. We also developed an enveloping construction for involutive algebras over $\Z_p$, and studied basic $K$-theory, both in its homotopy analytic and continuous forms. These results laid the foundations for the subject and opened up several natural directions of further study.  

The present article continues this project, expanding the foundations and exploring new structural aspects of the theory. In particular, we address questions about bicommutants and centers, simplicity, representations via quasi-Hilbert spaces, and refinements of $K$-theory.  

Our contributions can be summarized as follows:
\begin{itemize}
    \item \textbf{$p$-adic von Neumann algebras.} We introduce a $p$-adic analogue of von Neumann algebras and analyze their centers, obtaining a description in terms of finite conjugacy classes of groups.
    \item \textbf{$p$-simplicity.} For ample Hausdorff groupoids, we characterize minimality and effectiveness in terms of $p$-simplicity of the associated operator algebra.
    \item \textbf{A $p$-adic GNS theorem.} We construct quasi-Hilbert spaces and prove a $p$-adic version of the GNS representation theorem for Banach $^*$-algebras over $\mathbb{Z}_p$. This yields broad classes of examples, including residually finite-rank and affinoid algebras.
    \item \textbf{$K$-theory.} We investigate both homotopy analytic and continuous $K$-theory for commutative $p$-adic operator algebras. Using Clausen’s descent formalism, we compute the homotopy analytic $K$-theory of $C(X,\mathbb{Z}_p)$ in terms of $\Gamma(X,K(\mathbb{F}_p))$. We also describe continuous $K$-theory for general $p$-adic operator algebras in terms of nuclear modules, following recent work of Efimov and collaborators.
\end{itemize}

These results enrich the structure theory of $p$-adic operator algebras and extend the framework developed in~\cite{BGM:padicI}, opening new perspectives for further interactions with noncommutative geometry, representation theory, and $p$-adic analytic geometry.  

The paper is organized as follows. Section~\ref{sec:vn} introduces $p$-adic von Neumann algebras and studies their centers. Section~\ref{sec:psimple} deals with $p$-simplicity and groupoid algebras. Section~\ref{sec:gns} develops the $p$-adic GNS construction and its consequences. Section~\ref{sec:kh} computes the homotopy analytic $K$-theory of commutative $p$-adic operator algebras of the form $C(X,\zP)$ for $X$ a compact Hausdorff space, while Section~\ref{sec:nonhomotopy} discusses non-homotopy invariant $K$-theories and continuous refinements.  

Throughout this paper let $p$ be a prime number.

\section{$p$-adic von Neumann algebras}\label{sec:vn}

In the Archimedean theory, von Neumann algebras are characterized either as double commutants or as strong/weak operator closures of $C^*$-algebras on Hilbert spaces. In the non-Archimedean case, it is natural to ask for an analogue of this construction for operator algebras over $\mathbb{Z}_p$. The aim of this section is to propose a first definition of $p$-adic von Neumann algebras and to examine their centers in the case of group algebras.

Recall from \cite{BGM:padicI} that a $p$-adic operator algebra is a Banach $^*$-algebra $A$ over $\mathbb{Z}_p$ that admits an isometric representation as a $*$-subalgebra of $\BUN(\qP(X))$, where $Q_p(X)$ is the non-Archimedean analogue of $\ell^2(X)$ introduced in \cite{BGM:padicI} following Clausnitzer--Thom \cite{Clausnitzer-Thom:p-adic}. Here $X$ is an arbitrary set. One may identify $\BUN(\qP(X))$ with $\BB(c_0(X,\mathbb{Z}_p))$ and the space $\HH=c_0(X,\mathbb{Z}_p)$ is a basic example of a ``quasi-Hilbert space'' when equipped with its canonical $\mathbb{Z}_p$-valued pairing, see Section~\ref{sec:gns} for more details.  

In the present paper we introduce quasi-Hilbert spaces as a general framework for $p$-adic functional analysis (see Section~\ref{sec:gns}). Motivated by the bicommutant theorem in the Archimedean case, we adopt the following working definition.

Let $X$ be a set. Let $M$ be a subset of $\BUN(\qP(X))$; define
\begin{align*}
    M' = \set{a \in \BUN(\qP(X)) \mid ax = xa\,\mbox{ for all } x \in M}.
\end{align*}

\begin{definition}[von Neumann $p$-adic operator algebra] Let $X$ be a set. A \textit{$p$-adic von Neumann  algebra} is a $\ast$-subalgebra $M$ of $\BUN(\qP(X))$ such that
\begin{align*}
    M'' = M.
\end{align*}
\end{definition}

\begin{remark}
At this stage it is not clear whether a $p$-adic analogue of the bicommutant theorem holds, that is, whether $M=M''$ is equivalent to $M$ being closed in an appropriate weak or strong operator topology. Developing the correct non-Archimedean topologies on $\BB(\HH)$ is an open problem, which we hope to address in future work.
\end{remark}

\begin{example}
For every $\ast$-subalgebra $B \subseteq \BUN(\qP(X))$, the commutant $B'$ is a $p$-adic von Neumann  algebra i.e. $B' = B'''$. To see this let $b \in B'$. Note that
\begin{align*}
    b \in B''' &\iff \forall c \in B'': \ cb = bc .
\end{align*}
Let $c \in B''$; then for every $d \in B'$ we have $cd = dc$. In particular, $cb = bc$ from which we conclude that $b \in B'''$. Now let $b \in B'''$, then, for every $c \in B''$ we have $cb = bc$. We claim that $B \subseteq B''$, to prove this, take $d \in B$, by definition, for every $e \in B'$ we have $ed = de$ concluding the claim. The element $b$ commutes with everything in $B''$, therefore it commutes with everyone in $B$ and so $b \in B'$.

In particular, if $B\sbe \BUN(\qP(X))$ is a $p$-adic operator algebra, then $B''$ is a $p$-adic von Neumann algebra. 
\end{example}

Another source of examples are intersections:

\begin{proposition}\label{prop:intersection-infinite}
Let $\{M_i : i \in I\}$ be a family of $p$-adic von Neumann algebras acting on the same $p$-adic Hilbert space $\HH=\qP(X)$. Then their intersection
\[
M := \bigcap_{i\in I} M_i
\]
is again a $p$-adic von Neumann algebra on $\HH$; equivalently $M''=M$.
\end{proposition}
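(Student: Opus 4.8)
The plan is to reduce everything to the standard Galois-connection properties of the commutant operation $M \mapsto M'$, which hold verbatim in this non-Archimedean setting because the definition of $M'$ is purely algebraic. First I would record three elementary facts: (i) the operation is order-reversing, i.e. $A \subseteq B$ implies $B' \subseteq A'$; (ii) $A \subseteq A''$ for every subset $A \subseteq \BUN(\qP(X))$; and (iii) consequently $A \mapsto A''$ is order-preserving. Fact (iii) follows by applying (i) twice. These are exactly the ingredients already exploited in the preceding example to establish $B' = B'''$.

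Next I would check that $M = \bigcap_{i \in I} M_i$ is itself a $\ast$-subalgebra of $\BUN(\qP(X))$: an intersection of $\ast$-subalgebras is closed under the algebra operations and under the involution, so this is immediate. Thus it is meaningful to ask whether $M'' = M$.

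For the nontrivial inclusion $M'' \subseteq M$, I would observe that $M \subseteq M_i$ for each $i \in I$ by definition of the intersection. Applying the order-preserving operation from (iii) gives $M'' \subseteq M_i'' = M_i$, where the equality is precisely the hypothesis that each $M_i$ is a $p$-adic von Neumann algebra. Since this holds for every $i$, intersecting over $I$ yields $M'' \subseteq \bigcap_{i \in I} M_i = M$. The reverse inclusion $M \subseteq M''$ is exactly fact (ii). Combining the two gives $M'' = M$, as claimed.

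I do not expect a serious obstacle here: the argument is a formal consequence of the closure-operator structure of the bicommutant, and the $p$-adic nature of $\BUN(\qP(X))$ enters only insofar as it guarantees that $M'$ is well defined as a subset of $\BUN(\qP(X))$. The one point worth verifying is that each commutant genuinely lands back in $\BUN(\qP(X))$, so that the operations compose as asserted; but this is built into the definition of $M'$ as a subset of $\BUN(\qP(X))$, so no additional estimate is required.
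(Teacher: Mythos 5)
Your proof is correct, but it follows a genuinely different route from the paper's. The paper's argument hinges on the commutant identity $\bigl(\bigcup_{i\in I} A_i\bigr)' = \bigcap_{i\in I} A_i'$: applying it to $A_i = M_i'$ exhibits $M = \bigcap_i M_i'' = \bigl(\bigcup_i M_i'\bigr)'$ as itself a commutant, and then the tricommutant identity $S''' = S'$ (equivalently, the fact from the preceding example that every commutant is a $p$-adic von Neumann algebra) yields $M'' = M$. You instead run the standard closure-operator argument: from the order-reversing property and $A \subseteq A''$ you deduce monotonicity of the bicommutant, so $M \subseteq M_i$ gives $M'' \subseteq M_i'' = M_i$ for every $i$, hence $M'' \subseteq M$, while $M \subseteq M''$ is automatic. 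Both arguments are purely formal consequences of the Galois connection induced by commutation, so neither uses anything $p$-adic. Yours is more economical, avoiding the union-intersection identity entirely and producing the two inclusions directly; the paper's has the side benefit of exhibiting $M$ explicitly as the commutant of the concrete set $\bigcup_{i\in I} M_i'$, which ties the proposition back to the earlier observation that commutants are always $p$-adic von Neumann algebras. Your explicit check that $M$ is a $\ast$-subalgebra of $\BUN(\qP(X))$ (which the definition of a $p$-adic von Neumann algebra requires) is a point the paper leaves implicit, so including it is a small improvement rather than a redundancy.
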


\begin{proof}
For arbitrary subsets $A_i \subseteq \BUN(\HH)$ one has the commutant identity
\[
\Big(\bigcup_{i\in I} A_i\Big)' = \bigcap_{i\in I} A_i'.
\]
Applying this to $A_i=M_i'$ gives
\[
\Big(\bigcup_{i\in I} M_i'\Big)' = \bigcap_{i\in I} M_i''.
\]
Since each $M_i$ is a von Neumann algebra, $M_i''=M_i$, hence the right-hand side equals $\bigcap_{i\in I} M_i = M$. Taking commutants on both sides, we obtain
\[
\Big(\bigcup_{i\in I} M_i'\Big)'' = M'.
\]
Taking commutants once more yields
\[
\Big(\big(\bigcup_{i\in I} M_i'\big)''\Big)' = M''.
\]
But $\big((\bigcup_i M_i')''\big)'=(\bigcup_i M_i')'=M$, so $M''=M$ as claimed.
\end{proof}

\begin{corollary}\label{cor:center-vn}
If $M$ be a $p$-adic von Neumann algebra on $\qP(X)$, then so is its commutant, and therefore also its \emph{center}
\[
Z(M) = M\cap M'.
\]
\end{corollary}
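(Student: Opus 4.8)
The plan is to reduce the statement to two facts already at our disposal: the computation in the Example above, that the commutant of a $\ast$-subalgebra of $\BUN(\qP(X))$ is a $p$-adic von Neumann algebra, and the stability of the class of $p$-adic von Neumann algebras under intersections recorded in Proposition~\ref{prop:intersection-infinite}.

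First I would check that $M'$ is a $p$-adic von Neumann algebra. Since a von Neumann algebra is in particular a $\ast$-subalgebra of $\BUN(\qP(X))$, this is exactly the content of the Example preceding Proposition~\ref{prop:intersection-infinite}: the general commutant identity $S' = S'''$, which follows from $S \subseteq S''$ by reversing inclusions twice, specialises to $M' = M''' = (M')''$, so $M'$ satisfies the defining equation of a $p$-adic von Neumann algebra. The one point that genuinely uses the hypotheses is that $M'$ is itself a $\ast$-subalgebra. Closure under sums and products is immediate from the ultrametric inequality together with submultiplicativity of the operator norm, which also keeps $M'$ inside the unit ball $\BUN$; closure under the involution uses that $M$ is self-adjoint. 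Concretely, if $a \in M'$ and $x \in M$, then applying the (isometric, anti-multiplicative) involution to $ax = xa$ gives $x^\ast a^\ast = a^\ast x^\ast$, and since $M^\ast = M$, as $x$ ranges over $M$ so does $x^\ast$, whence $a^\ast \in M'$.

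Next I would identify the center as an intersection. By definition $Z(M)$ consists of those $z \in M$ that commute with every element of $M$, and an element of $\BUN(\qP(X))$ commutes with all of $M$ precisely when it lies in $M'$; hence $Z(M) = M \cap M'$. Both $M$ (by hypothesis) and $M'$ (by the previous step) are $p$-adic von Neumann algebras acting on $\HH = \qP(X)$, so Proposition~\ref{prop:intersection-infinite} applied to the two-element family $\{M, M'\}$ shows that $M \cap M'$ is again a $p$-adic von Neumann algebra.

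The argument is essentially formal, so there is no serious analytic obstacle. The only place where the non-Archimedean setting must be invoked, rather than blindly transcribing the Archimedean proof, is the verification that $M'$ is a $\ast$-subalgebra: there one needs that the involution on $\BUN(\qP(X))$ is an isometric anti-automorphism and that the ultrametric norm makes the closed unit ball stable under addition, so that $M'$ is a genuine object of the theory and not merely the commutant of a set of operators.
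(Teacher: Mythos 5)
Your proof is correct and follows essentially the same route as the paper's intended argument: it establishes that $M'$ is a $p$-adic von Neumann algebra via the commutant identity $M' = M''' = (M')''$ (the content of the Example on commutants of $\ast$-subalgebras), and then applies Proposition~\ref{prop:intersection-infinite} to the family $\{M, M'\}$ to conclude that $Z(M) = M \cap M'$ is again a $p$-adic von Neumann algebra. Your additional verification that $M'$ is genuinely a $\ast$-subalgebra of $\BUN(\qP(X))$ (using self-adjointness of $M$ and the fact that the unit ball is closed under sums and products in the ultrametric setting) is a point the paper leaves implicit, but it does not change the structure of the argument.
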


\begin{remark}
The proofs above are purely algebraic and rely only on the formal properties of the commutant operation and the bicommutant condition. In the Archimedean theory one often deduces the same facts from topological arguments (e.g.\ by observing that intersections of weakly closed *-algebras are weakly closed). In the $p$-adic setting a parallel topological viewpoint requires a careful development of weak/strong operator topologies on $\BUN(\HH)$; establishing the equivalence between bicommutant and closure formulations in that non-Archimedean context is an interesting direction for future work.
\end{remark}

Next we consider $p$-adic group algebras as another natural source of von Neumann algebras. 
The following example already appears in Claussnitzer's thesis \cite{claussnitzer-thesis}*{Theorem~4.1.1} for countable groups.

\begin{example}\label{ex:group-vn}
Let $G$ be a discrete group. Consider the $p$-adic group algebra $c_0(G,\mathbb{Z}_p)$, represented on $\mathbb{Q}_p(G)$ by the left convolution action
\[
\lambda\colon c_0(G,\mathbb{Z}_p)\to \BB(\mathbb{Q}_p(G)),\qquad 
\lambda(\xi)(\eta)=\xi\ast \eta.
\]
Denote by $\mathcal{O}_p(G):=\lambda(c_0(G,\mathbb{Z}_p))$ the image of this representation. 
By \cite{BGM:padicI} we know that $\lambda$ is an isometric $^*$-homomorphism, so $\mathcal{O}_p(G)\cong c_0(G,\mathbb{Z}_p)$ as Banach $^*$-algebras over $\zP$. 

We claim that $\mathcal{O}_p(G)$ is already a $p$-adic von Neumann algebra, i.e.\ $\mathcal{O}_p(G)=\mathcal{O}_p(G)''$. 
Indeed, an operator $S\in \BB(\mathbb{Q}_p(G))$ belongs to $\mathcal{O}_p(G)'$ if and only if
\[
\lambda(\delta_g)S=S\lambda(\delta_g)\quad\forall g\in G,
\]
which is equivalent to the matrix identity 
\[
S_{g^{-1}h,k}=S_{h,gk}\qquad\forall g,h,k\in G.
\]
Given $x,y\in G$, define $T(x,y)\in \BB(\mathbb{Q}_p(G))$ by 
\[
T(x,y)_{h,k}=
\begin{cases}
1,&\text{if }(h,k)=(gx,gy)\text{ for some }g\in G,\\
0,&\text{otherwise.}
\end{cases}
\]
One checks that each $T(x,y)\in\mathcal{O}_p(G)'$. If $S\in\mathcal{O}_p(G)''$, then $ST(x,y)=T(x,y)S$ for all $x,y\in G$, which forces 
\[
S_{h,k}=S_{hg,kg}\qquad\forall g,h,k\in G.
\]
Defining $\phi_S\colon G\to\mathbb{Z}_p$ by $\phi_S(x):=S_{x,e}$, it follows from \cite{BGM:padicI}*{Proposition~4.9} that $\phi_S\in c_0(G,\mathbb{Z}_p)$ and that $\lambda(\phi_S)=S$. Thus $S\in\mathcal{O}_p(G)$, and we conclude that $\mathcal{O}_p(G)''=\mathcal{O}_p(G)$. Hence the group algebra $c_0(G,\mathbb{Z}_p)\cong \OO_p(G)$ is a $p$-adic von Neumann algebra.
\end{example}

The above shows that, in the context of $p$-adic operator algebras, the group algebra $c_0(G,\mathbb{Z}_p)\cong\mathcal{O}_p(G)$ is already a $p$-adic von Neumann algebra. This stands in sharp contrast with the classical (Archimedean) situation: for a discrete group $G$, the complex group algebra $\mathbb{C}G$ admits several different completions as $C^*$-algebras, 
such as the full group $C^*$-algebra $C^*(G)$, its reduced counterpart 
$C^*_{r}(G)\subseteq\mathcal{B}(\ell^2(G))$, and various intermediate exotic completions. 
Moreover, the group von Neumann algebra $L(G)=C^*_r(G)''\subseteq \mathcal{B}(\ell^2(G))$ 
is distinct from $C^*_r(G)$ unless $G$ is finite. 
In the $p$-adic setting, by contrast, all these possible completions collapse to a single algebra, 
namely $c_0(G,\mathbb{Z}_p)\cong\mathcal{O}_p(G)$.

\begin{remark}
The result above does not extend to groupoid $p$-adic operator algebras, that is, to the algebras 
$\mathcal{O}_p(G)\subseteq \BUN(\qP(G))$ defined from an étale groupoid $G$, see \cite{BGM:padicI} for details. 
A first simple obstruction for an algebra to be a $p$-adic von Neumann algebra is that it must be unital. 
For instance, the algebra of compact operators 
$\mathcal K_{\leq 1}(\qP(X))\subseteq \BUN(\qP(X))$ 
is not a $p$-adic von Neumann algebra unless $X$ is finite, because its commutant is trivial 
(see Proposition~\ref{prop:commutant-compacts} below) and therefore its bicommutant is the whole $\BUN(\qP(X))$. 
Observe that $\mathcal K_{\leq 1}(\qP(X))$ is (isomorphic to) the groupoid algebra $\mathcal{O}_p(G)$ 
of the pair groupoid $G=X\times X$, see \cite{BGM:padicI}*{Example~6.19}. 
Even if we adjoin a unit, considering the unitization 
$\mathcal K_{\leq 1}(\qP(X))+\zP\cdot 1\subseteq \BUN(\qP(X))$, 
this is not a $p$-adic von Neumann algebra in general: 
$\mathcal K_{\leq 1}(\qP(X))''=\BUN(\qP(X))$ (see Proposition~\ref{prop:commutant-compacts} below), so that any 
$^*$-subalgebra $A\subseteq \BUN(\qP(X))$ containing $\mathcal K_{\leq 1}(\qP(X))$ 
necessarily satisfies $A''=\BUN(\qP(X))$.
\end{remark}

\begin{proposition}\label{prop:commutant-compacts}
Let $\mathcal B=\BUN(\qP(X))$. Let
\[
\mathcal K \subseteq \mathcal B
\]
be the subalgebra of compact operators (restricted to the unit ball), as defined in \cite{Clausnitzer-Thom:p-adic}.
Then the commutant of $\mathcal K$ inside $\mathcal B$ is trivial:
\[
\mathcal K' = \{ \lambda\cdot 1 : \lambda \in \mathbb Z_p \},
\]
that is, any $T\in B$ with $TK=KT$ for all $K\in\mathcal K$ is a scalar operator, and the scalar lies in $\mathbb Z_p$.
\end{proposition}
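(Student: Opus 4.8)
The plan is to mimic the classical Schur-type argument that the commutant of all matrix units is scalar, adapted to the non-Archimedean bounded operators on $\qP(X)$. The key point is that $\mathcal K_{\leq 1}(\qP(X))$ contains enough ``rank-one'' operators—the analogues of the matrix units $e_{x,y}$ defined on the canonical basis $\{\delta_x : x\in X\}$ of $c_0(X,\mathbb{Z}_p)$—to pin down any commuting operator completely. Concretely, for $x,y\in X$ I would work with the operators $e_{x,y}\in\mathcal K$ sending $\delta_y\mapsto\delta_x$ and all other basis vectors to $0$; these are bounded by $1$ and lie in the compact operators, so they belong to $\mathcal K$.

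First I would fix $T\in\mathcal B$ with $TK=KT$ for all $K\in\mathcal K$, and record its ``matrix'' $T_{u,v}=\langle \delta_u, T\delta_v\rangle\in\mathbb Z_p$ with respect to the canonical pairing (the entries lie in $\mathbb Z_p$ since $T$ has norm $\leq 1$). Testing the commutation relation $Te_{x,y}=e_{x,y}T$ against the basis vector $\delta_y$ and pairing with $\delta_u$ yields, after a short computation, $T_{u,x}\,\delta_{y,y}$ on one side and $\delta_{u,x}\,T_{y,y}$ on the other (with the appropriate bookkeeping of indices). Comparing entries forces $T_{u,x}=0$ whenever $u\neq x$, and $T_{x,x}=T_{y,y}$ for all $x,y$. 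Thus $T$ is diagonal with a single common diagonal value $\lambda\in\mathbb Z_p$, i.e.\ $T=\lambda\cdot 1$ on each basis vector.

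The one genuinely non-classical step, and where I expect the main obstacle, is passing from the action on finite linear combinations of basis vectors to the conclusion on all of $\qP(X)=c_0(X,\mathbb{Z}_p)$: I must check that the diagonal operator $\lambda\cdot 1$ determined on the basis actually coincides with $T$ as a bounded operator, which requires that finite combinations of the $\delta_x$ are dense (in the relevant $p$-adic sense) in $c_0(X,\mathbb{Z}_p)$ and that $T$ is continuous, so that agreement on a dense subset extends to equality. This is where one uses that $c_0(X,\mathbb{Z}_p)$ consists precisely of families vanishing at infinity, so the finitely supported families are dense; continuity of $T$ (norm $\leq 1$) then does the rest. I would also note that the reverse inclusion $\{\lambda\cdot 1\}\subseteq\mathcal K'$ is trivial, since scalars commute with everything, so the only real content is the inclusion just described. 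Finally, I would remark that the scalar automatically lies in $\mathbb Z_p$ rather than $\mathbb Q_p$ precisely because $T$ is an operator on the unit ball, i.e.\ $T\in\BUN(\qP(X))$, which bounds the diagonal entries by $1$.
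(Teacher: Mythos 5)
Your proposal is correct and follows essentially the same argument as the paper: both commute $T$ against the elementary matrix units $E_{x,y}\in\mathcal K$, compare actions on basis vectors to force the off-diagonal entries to vanish and the diagonal entries to coincide, and then bound the resulting scalar by $\|T\|\le 1$ to place it in $\Z_p$. The density/continuity step you flag as the main obstacle is indeed the (routine) final ingredient, handled implicitly in the paper by noting that $T(\delta_x)=\lambda\,\delta_x$ for all $x$ determines the bounded operator $T$ on all of $c_0(X,\Z_p)$.
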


\begin{proof}
Denote by $(\delta_x)_{x\in X}$ the canonical delta-functions in $Q_p(X)$.
For $x,y\in X$ consider the elementary finite-rank operator $E_{x,y}\in B$ defined on the canonical basis by
\[
E_{x,y}(\delta_z) = \begin{cases} \delta_x & \text{if } z=y,\\ 0 & \text{if } z\neq y. \end{cases}
\]
Equivalently, the matrix of $E_{x,y}$ has the single nonzero entry $1$ at position $(x,y)$. Clearly every $E_{x,y}$ has norm $\le 1$ and hence $E_{x,y}\in\mathcal K$.

Let $T\in B$ satisfy $TE_{x,y}=E_{x,y}T$ for all $x,y\in X$. We compare the actions of both sides on a basis vector $\delta_z$:
\[
TE_{x,y}(\delta_z)=T(\delta_x)\cdot\mathbf{1}_{\{z=y\}},\qquad
E_{x,y}T(\delta_z)=T_{y,z}\,\delta_x,
\]
where $T_{u,v}$ denotes the matrix entries of $T$, i.e.\ $T(\delta_v)=\sum_{u} T_{u,v}\delta_u$.
Thus, for all $x,y,z\in X$,
\[
\mathbf{1}_{\{z=y\}}\,T(\delta_x)=T_{y,z}\,\delta_x.
\]

Fix arbitrary $x,y\in X$. If $z\neq y$ the left-hand side vanishes, hence the right-hand side must vanish; evaluating at the coordinate $\delta_x$ we obtain $T_{y,z}=0$ whenever $z\neq y$. Therefore for each fixed $y$ the only possibly nonzero entries in the \(y\)-th row of \(T\) occur at column \(z=y\); equivalently,
\[
T_{y,z}=0\quad\text{for }z\neq y.
\]
Now put $z=y$ in the displayed identity. For every $x,y$ we get
\[
T(\delta_x)=T_{y,y}\,\delta_x.
\]
The left-hand side does not depend on $y$, hence $T_{y,y}$ is independent of $y$. Write $\lambda:=T_{y,y}\in\mathbb Q_p$ (same for all $y$). Then
\[
T(\delta_x)=\lambda\,\delta_x\qquad(\forall x\in X),
\]
so \(T=\lambda\cdot 1\) is a scalar operator.

It remains to see that \(\lambda\in\mathbb Z_p\). Since \(T\) belongs to the closed unit ball \(B\) we have \(\|T\|\le 1\). But \(\|T\|=\sup_{\| \xi\|\le1}\|T\xi\|=\sup_{\| \xi\|\le1}\|\lambda\xi\|=|\lambda|_p\), therefore \(|\lambda|_p\le1\) and hence \(\lambda\in\mathbb Z_p\). This shows $\mathcal K' \subseteq \mathbb Z_p\cdot 1$.

Conversely, any scalar $\lambda\cdot 1$ with $\lambda\in\mathbb Z_p$ clearly commutes with every operator in $\mathcal K$, so $\mathcal K' = \mathbb Z_p\cdot 1$ as claimed.
\end{proof}

\subsection{Centers and factor $p$-adic von Neumann algebras}

\begin{definition}[Factor] Given a $p$-adic von Neumann algebra $M\sbe \BUN(\qP(X))$, we write $Z(M)=M\cap M'$ for its center. We say that $M$ is a \emph{factor} if its center is trivial, that is, $Z(M)=\zP\cdot 1$, only scalar multiples of the identity operator.
\end{definition}

One simple example of a factor is $\BUN(\qP(X))$ itself, as is easy to check that its center is trivial. 

Next we are going to describe when group $p$-adic operator algebras lead to factors.

\begin{proposition} Let $G$ be a discrete group. We then have 
\begin{align*}
    Z(c_0(G,\zP)) = \set{\phi \in c_0(G, \zP) \mid \forall g, h \in G: \ \phi(g^{-1}h) = \phi( hg^{-1}) }.
\end{align*}
\end{proposition}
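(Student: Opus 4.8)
The plan is to exploit the isometric isomorphism $\lambda\colon c_0(G,\zP)\congto\OO_p(G)=M$ from Example~\ref{ex:group-vn}, which identifies the algebraic center of $c_0(G,\zP)$ with $Z(M)=M\cap M'$. Indeed, since $\lambda$ is an injective homomorphism, an element $\lambda(\phi)$ lies in $Z(M)$ precisely when it commutes with every $\lambda(\psi)$, and $\lambda(\phi)\lambda(\psi)=\lambda(\phi\ast\psi)$, $\lambda(\psi)\lambda(\phi)=\lambda(\psi\ast\phi)$. Thus $\phi$ is central if and only if $\phi\ast\psi=\psi\ast\phi$ for all $\psi\in c_0(G,\zP)$, and the whole task reduces to analyzing this convolution identity.

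First I would reduce ``commutes with all $\psi$'' to ``commutes with all $\delta_g$''. The finitely supported functions are the $\zP$-linear combinations of the $\delta_g$ and are dense in $c_0(G,\zP)$; since convolution is continuous (we are in a Banach $\zP$-algebra), the map $\psi\mapsto\phi\ast\psi-\psi\ast\phi$ is continuous and $\zP$-linear, hence vanishes identically as soon as it vanishes on each generator $\delta_g$. Therefore $\phi$ is central if and only if $\phi\ast\delta_g=\delta_g\ast\phi$ for every $g\in G$.

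The remaining step is the explicit computation of the two convolutions. Using $(\xi\ast\eta)(x)=\sum_{y}\xi(y)\eta(y^{-1}x)$ one obtains, with no convergence issues since $\delta_g$ has finite support, $(\phi\ast\delta_g)(x)=\phi(xg^{-1})$ and $(\delta_g\ast\phi)(x)=\phi(g^{-1}x)$. Hence centrality of $\phi$ amounts to $\phi(xg^{-1})=\phi(g^{-1}x)$ for all $x,g\in G$; relabelling $x=h$ gives exactly the stated condition $\phi(hg^{-1})=\phi(g^{-1}h)$. Both inclusions of the asserted set equality are then immediate from this equivalence. (As a sanity check one may set $u=g^{-1}h$, so that $hg^{-1}=gug^{-1}$ and the condition reads $\phi(u)=\phi(gug^{-1})$, i.e.\ $\phi$ is constant on conjugacy classes, which is the expected $p$-adic analogue of a central/class function.)

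I do not anticipate a genuine obstacle here, as the argument is essentially a direct computation. The only points requiring a little care are the density-and-continuity reduction to the generators $\lambda(\delta_g)$, and bookkeeping of the identification under $\lambda$ so that ``the center of $c_0(G,\zP)$'' is understood as the algebraic center, equivalently $M\cap M'$ inside $\BUN(\qP(G))$.
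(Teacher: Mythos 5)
Your proof is correct, and it diverges from the paper's in one meaningful way. The forward direction is identical: both you and the paper test centrality against the delta functions $\delta_g$ and compute $\phi\ast\delta_g$, $\delta_g\ast\phi$ explicitly. The difference is in the converse. The paper proves it by a direct manipulation of the convolution sum for an \emph{arbitrary} $\psi\in c_0(G,\zP)$: writing $(\phi\ast\psi)(h)=\sum_{g}\phi(g^{-1}h)\psi(g)$, applying the hypothesis $\phi(g^{-1}h)=\phi(hg^{-1})$ termwise, and recognizing the result as $(\psi\ast\phi)(h)$ (the infinite sum and its rearrangement being unproblematic by ultrametric convergence, since $\psi$ vanishes at infinity). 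You instead argue softly: the map $\psi\mapsto\phi\ast\psi-\psi\ast\phi$ is $\zP$-linear and continuous, finitely supported functions are dense in $c_0(G,\zP)$, so vanishing on the generators $\delta_g$ suffices. Both routes are sound; yours trades the explicit infinite-sum bookkeeping for the (standard, but worth citing) facts that convolution is submultiplicative on $c_0(G,\zP)$ and that $\zP$-spans of deltas are dense, while the paper's computation is entirely self-contained. One further remark: your preliminary bookkeeping through $\lambda$ and Example~\ref{ex:group-vn} is harmless but unnecessary. Since $\lambda$ is an injective $\ast$-homomorphism and $Z(M)=M\cap M'$ is by definition the set of elements of $M$ commuting with all of $M$ --- i.e.\ the algebraic center --- the statement concerns nothing more than the algebraic center of the convolution algebra, which is where the paper works directly from the start.
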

\begin{proof}
Let $\phi \in Z(c_0(G,\zP))$. For each $g_0 \in G$, the delta function $\delta_{g_0} \in c_0(G,\zP)$, and we have
\begin{align*}
    \phi \ast \delta_{g_0} = \delta_{g_0} \ast \phi.
\end{align*}
It is easy to see that for an arbitrary element $h \in G$, we have \(\phi(g_0^{-1}h) = \phi(hg_0^{-1})\), so that \begin{align*}
    Z(c_0(G,\zP)) \subseteq \set{\phi \in c_0(G, \zP) \mid \forall g, h \in G: \ \phi(g^{-1}h) = \phi(hg^{-1}) }.
\end{align*}

Now suppose $\phi \in c_0(G, \zP)$ is such that \(\phi(g^{-1}h) = \phi(hg^{-1})\). Then for an arbitrary \(\psi \in c_0(G,\zP)\) and \(h \in G\), we get
\begin{align*}
    (\phi \ast \psi)(h) &= \sum_{h \in G} \phi(k^{-1}h) \psi(k) = \sum_{g \in G} \phi(g^{-1} h) \psi(g) = \sum_{g \in G} \phi(hg^{-1}) \psi(g) \\
    &= \sum_{g \in G} \psi(g) \phi(hg^{-1}) = (\psi \ast \phi)(h),
\end{align*}
concluding that $\phi \in Z(c_0 (G, \zP))$
\end{proof}

Given $g\in G$, we denote by $C(g):=\{hgh^{-1}\in G: h\in G \}$ the conjugacy classes of $g$ in $G$. The above proposition says that $F\in Z(c_0(G,\zP))$ if and only if $F$ is constant on conjugacy classes. Notice that the conjugacy classes are the orbits of the conjugation action of $G$ on itself: the action given by $c_g(h):=ghg^{-1}$. In particular, $hC(g)h^{-1}=C(g)$. Therefore, $F$ is central in $c_0(G,\zP)$ if and only if it induces a function on the orbit space $G/c:=\{C(g): g\in G\}$.

Let $\Gcfin:=\{g\in G: |C(g)|<\infty\}$. Then $\Gcfin$ is a normal subgroup of $G$.

How can we relate the center of $c_0(G,\zP)$ to $\Gcfin$?

\begin{corollary}
    The center of $c_0(G,\zP)$ is trivial, that is, $Z(c_0(G,\zP))=\zP$ iff $\Gcfin$ is the trivial group, that is, $G$ is an i.c.c group -- the non-trivial conjugacy classes are all infinite: $|C(g)|=\infty$ for all $g\not=e$.
\end{corollary}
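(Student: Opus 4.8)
The plan is to read off, from the preceding proposition, that $Z(c_0(G,\zP))$ consists exactly of the class functions lying in $c_0(G,\zP)$, and then to exploit the decay condition built into $c_0$. Concretely, if $\phi$ is constant on conjugacy classes, then for every $n$ the set $\{g\in G:\phi(g)\notin p^n\zP\}$ is a union of conjugacy classes, and the $c_0$-condition requires this set to be finite for each $n$. The interplay between "union of conjugacy classes'' and "finite'' is what drives the whole statement.

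The crux, which I expect to be the heart of the argument even though it is short, is the observation that a central $\phi$ must vanish on every \emph{infinite} conjugacy class. Indeed, suppose $C(g)$ is infinite and $\phi(g)=c\neq 0$ with $|c|_p=p^{-m}$. Since $\phi$ is constant on $C(g)$, every one of the infinitely many elements of $C(g)$ satisfies $\phi\notin p^{m+1}\zP$, contradicting the $c_0$-decay taken with $n=m+1$. Hence $\phi\equiv 0$ on $C(g)$.

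For the implication $\Gcfin=\{e\}\Rightarrow Z=\zP$, I would assume $G$ is i.c.c., so that every $g\neq e$ has $|C(g)|=\infty$. Given any $\phi\in Z(c_0(G,\zP))$, the crux forces $\phi(g)=0$ for all $g\neq e$, leaving $\phi=\phi(e)\,\delta_e$. As $\delta_e$ is the convolution unit, this means $\phi\in\zP\cdot 1$, and therefore $Z(c_0(G,\zP))=\zP$.

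For the converse I would argue by contraposition: if $G$ is not i.c.c., choose $g\neq e$ with $|C(g)|<\infty$, and consider the indicator $\mathbf{1}_{C(g)}$. It is finitely supported, hence lies in $c_0(G,\zP)$, and it is constant on conjugacy classes, so by the proposition it is central. Since $g\neq e$ is not conjugate to $e$, we have $\mathbf{1}_{C(g)}(e)=0\neq 1=\mathbf{1}_{C(g)}(g)$, so $\mathbf{1}_{C(g)}$ is not a scalar multiple of $\delta_e$. Thus $Z(c_0(G,\zP))\supsetneq\zP$, which gives the contrapositive and completes the proof. The only genuinely non-formal input is the vanishing-on-infinite-classes step; the two implications are then immediate bookkeeping.
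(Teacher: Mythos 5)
Your proof is correct and follows essentially the same route as the paper: both directions hinge on the same two ingredients, namely that a central element is a class function (the preceding proposition) which by the $c_0$-decay must vanish on every infinite conjugacy class, and that the indicator function $\mathbf{1}_{C(g)}=\sum_{h\in C(g)}\delta_h$ of a finite non-trivial conjugacy class is a central element not in $\zP\cdot\delta_e$. Your version merely makes the decay argument more explicit (quantifying with $|c|_p=p^{-m}$) and verifies centrality of the indicator via the proposition rather than by direct conjugation, which are cosmetic differences.
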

\begin{proof}
    Suppose $G$ is not i.c.c, and take $e\not= g\in G$ with $|C(g)|<\infty$. Define $\phi:=\sum_{h\in C(g)} \delta_h$, that is, $\phi(h)=1$ if $h\in C(g)$ and $\phi(h)=0$ otherwise. Then $\delta_k \phi \delta_{k^{-1}}=\sum_{h\in C(g)}\delta_{khk^{-1}}=\phi$ so that $\phi\in Z(c_0(G,\zP))$, and therefore the center is non-trivial.

    Now assuming that $G$ is i.c.c. we show that $Z(c_0(G,\zP))=\zP$. If $G$ is the trivial group, we have nothing to do: in this case $c_0(G,\zP)=\zP$. If $e\not= g\in G$, then $|C(g)|=\infty$ by assumption. If $\phi\in Z(c_0(G,\zP))$, then $\phi$ is constant on $C(g)$ by the previous proposition.
    Since $C(g)$ in infinity and $\phi$ vanishes at infinity, this can only happen if $\phi$ is zero on $C(g)$.
    But this implies in particular that $\phi(g)=0$. Therefore $\phi$ is a multiple of $\delta_e$, which is the unit of $c_0(G,\zP)$, that is, $\phi\in \zP=\zP\cdot 1$, as desired.
\end{proof}

Now consider the family $\set{C_i}_{i \in I}$ of all finite conjugacy classes of $G$. Define for each $i \in I$,
\begin{align*}
    \chi_i = \sum_{g \in C_i} \delta_g.
\end{align*} It is easy to see that $\set{\chi_i \mid i \in I}$ is a linearly independent set of maps in $Z(c_0(G, \zP))$. 

\begin{proposition} 
We have  \(Z(c_0(G, \zP)) = \overline{\mathrm{span}} \set{\chi_i \mid i \in I}\).
\end{proposition}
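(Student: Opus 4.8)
The plan is to prove the two inclusions separately, leaning on the previous proposition (central $\Leftrightarrow$ constant on conjugacy classes) together with the ultrametric structure of $c_0(G,\zP)$.

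First I would dispose of the inclusion $\overline{\mathrm{span}}\{\chi_i \mid i \in I\} \subseteq Z(c_0(G,\zP))$. Each $\chi_i$ is the indicator function of the finite conjugacy class $C_i$, hence constant on every conjugacy class of $G$, so by the previous proposition $\chi_i \in Z(c_0(G,\zP))$. The center is cut out by the conditions $\phi(g^{-1}h) = \phi(hg^{-1})$ for $g,h \in G$, each the kernel of the continuous functional $\phi \mapsto \phi(g^{-1}h) - \phi(hg^{-1})$; being an intersection of such kernels it is a closed $\zP$-submodule, and therefore contains the closed span of the $\chi_i$.

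For the reverse inclusion, take $\phi \in Z(c_0(G,\zP))$. By the previous proposition $\phi$ is constant on each conjugacy class. On an infinite conjugacy class this constant value must be $0$, since a nonzero value repeated at infinitely many points would contradict $\phi \in c_0(G,\zP)$ (vanishing at infinity). Hence $\supp\phi \subseteq \bigsqcup_{i\in I} C_i$, and writing $\lambda_i \in \zP$ for the value of $\phi$ on $C_i$ we obtain $\phi = \sum_{i\in I} \lambda_i \chi_i$ as a pointwise identity. It remains to upgrade this to a norm-convergent series, which is the one genuinely analytic step and the main obstacle. The hypothesis $\phi\in c_0(G,\zP)$ means that $\{g : |\phi(g)|_p \ge p^{-n}\}$ is finite for every $n$; since the $C_i$ are disjoint and nonempty, this set equals $\bigsqcup_{\,i\,:\,|\lambda_i|_p \ge p^{-n}} C_i$, forcing $\{i : |\lambda_i|_p \ge p^{-n}\}$ to be finite for each $n$, i.e. $\lambda_i \to 0$.

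Because $\|\chi_i\| = 1$ we then have $\|\lambda_i \chi_i\| = |\lambda_i|_p \to 0$, and in the non-Archimedean Banach space $c_0(G,\zP)$ a series converges precisely when its terms tend to $0$. Thus $\sum_{i} \lambda_i \chi_i$ converges in norm; its limit agrees with $\phi$ pointwise, since any finite partial sum $\sum_{i\in F}\lambda_i\chi_i$ already equals $\phi$ on $\bigsqcup_{i\in F}C_i$ and vanishes off it, so the limit is exactly $\phi$. This realizes $\phi$ as a norm-limit of finite $\zP$-combinations of the $\chi_i$, giving $\phi \in \overline{\mathrm{span}}\{\chi_i \mid i \in I\}$ and completing the equality. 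The only nonformal ingredient is this translation of the abstract vanishing-at-infinity condition into the concrete statement $\lambda_i\to 0$, after which the ultrametric convergence criterion reassembles $\phi$ from its conjugacy-class data; all remaining steps are bookkeeping with the characterization of central elements.
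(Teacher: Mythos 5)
Your proof is correct and follows essentially the same route as the paper's: centrality is equivalent to constancy on conjugacy classes, vanishing at infinity forces the value zero on infinite classes and forces the coefficients $\lambda_i$ to tend to zero, and the ultrametric convergence criterion then reassembles $\phi$ as a norm-convergent series in $\overline{\mathrm{span}}\{\chi_i\}$. If anything, you make explicit two points the paper leaves implicit, namely the closedness of the center (as an intersection of kernels of continuous functionals) and the vanishing of $\phi$ on infinite conjugacy classes.
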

\begin{proof}
Let $\phi \in Z(c_0(G, \zP))$.
Note that $\chi_i$ is a class function and therefore $\chi_i \in Z(c_0(G, \zP))$. There are at most countably many $g \in G$ such that $|\phi(g)| \neq 0$. To see this, note that for each $n \in \nN$, there are at most finitely many $g$'s with $|\phi(g)| > p^{-n}$. As $\phi$ is constant on the conjugacy classes we can set
\begin{align*}
    K = \set{i \in I \mid \text{for } g \in C_i: \phi(g) \neq 0}.
\end{align*}
Then $K$ is a countable set, and setting $a_i\in \zP$ to be the constant value of $\phi$ at $C_i$, we get
\begin{align*}
    \phi = \sum_{i \in K} a_i \chi_i.
\end{align*}
Writing \(K\) as a filtered union of finite sets \(K = \bigcup_{n \in \N} K_n\), we may express \(\phi\) as a uniform limit of finite partial sums
\begin{align*}
    \phi = \lim_{n \to \infty} \sum_{i \in K_n} a_i \chi_i = \sum_{i \in K} a_i \chi_i,
\end{align*}
as required.
\end{proof}

\begin{remark}
It remains open to determine to what extent a classification of ``$p$-adic factors'' is possible. The non-Archimedean setting may present new phenomena: for instance, it is not clear whether idempotents behave analogously to projections, and whether the classical type classification (I, II, III) can be carried over. We leave these questions for future work.
\end{remark}

\section{$p$-Simplicity}\label{sec:psimple}

\begin{definition} A $p$-adic operator algebra $A$ is said to be \emph{$p$-simple} if the algebraic quotient $A/pA$ is simple as an $\fP$-algebra. 
\end{definition}

Just as we can check that a Hausdorff ample groupoid is effective and minimal by looking at its $C^*$-algebra, one can also check this by looking at its $p$-adic operator algebra.

\begin{proposition} Let $\GG$ be a Hausdorff ample groupoid. Its $p$-adic group algebra $\OO_p(\GG)$ is $p$-simple if and only if $\GG$ is effective and minimal.
\end{proposition}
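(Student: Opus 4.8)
The plan is to reduce the statement modulo $p$ and to recognize the resulting $\fP$-algebra as a classical object whose simplicity is already understood. Recall from the group case (Example~\ref{ex:group-vn}) that $\OO_p(G)\cong c_0(G,\zP)$; the same analysis in \cite{BGM:padicI} identifies the groupoid algebra $\OO_p(\GG)$ with the closure of the convolution algebra $C_c(\GG,\zP)$ of locally constant, compactly supported $\zP$-valued functions on the ample groupoid $\GG$, namely $C_0(\GG,\zP)$ with its $p$-adic sup-norm. The first step is therefore to compute $\OO_p(\GG)/p\,\OO_p(\GG)$.

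First I would verify the $\fP$-algebra isomorphism
\[
\OO_p(\GG)/p\,\OO_p(\GG)\;\cong\;C_c(\GG,\fP),
\]
where the right-hand side is the Steinberg algebra of $\GG$ over $\fP$. Pointwise reduction $\zP\onto\fP$ induces a surjective homomorphism $C_0(\GG,\zP)\to C_0(\GG,\fP)$ compatible with convolution, since reduction is a ring map and the convolution sums are finite on an ample groupoid. Its kernel consists of the $f$ with values in $p\zP$; for such $f$ the function $p^{-1}f$ again lies in $C_0(\GG,\zP)$, so the kernel is exactly $p\,C_0(\GG,\zP)$. Finally, because $\fP$ is discrete, a continuous $\fP$-valued function vanishing at infinity is automatically compactly supported, so $C_0(\GG,\fP)=C_c(\GG,\fP)$. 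This yields the displayed identification and reduces $p$-simplicity of $\OO_p(\GG)$ to simplicity of the Steinberg algebra $C_c(\GG,\fP)$.

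With this reduction in hand, the proposition follows from the algebraic simplicity theorem for ample groupoid algebras: for a Hausdorff ample groupoid $\GG$ and a field $k$, the Steinberg algebra $C_c(\GG,k)$ is simple if and only if $\GG$ is effective and minimal (Brown–Clark–Farthing–Sims; Steinberg). Applying this with $k=\fP$ gives exactly the claimed equivalence. For a self-contained treatment one runs the two standard directions. For the \emph{if} direction, given a nonzero ideal $I\idealin C_c(\GG,\fP)$, effectiveness is used (via the key local-constancy argument) to produce a nonzero element of $I$ supported on the unit space $\GG^{(0)}$; the trace $I\cap C_c(\GG^{(0)},\fP)$ is then an invariant ideal, hence corresponds to a nonempty open invariant subset of $\GG^{(0)}$, which minimality forces to be all of $\GG^{(0)}$, so $I=C_c(\GG,\fP)$. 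For the \emph{only if} direction, failure of minimality produces a proper open invariant $U\subsetneq\GG^{(0)}$ and the honest ideal $C_c(\GG|_U,\fP)$, while failure of effectiveness produces a nontrivial ideal from the interior of the isotropy; either contradicts simplicity.

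The genuinely new point, and where I expect the main difficulty, is the first step: confirming that passing to the Banach completion and then reducing modulo $p$ returns precisely the algebraic Steinberg algebra, with no extra completion surviving. The argument above shows this hinges on two facts — that the kernel of reduction is exactly $p\,C_0(\GG,\zP)$, and that $\fP$-valued functions vanishing at infinity are compactly supported — both special to the non-Archimedean, totally disconnected setting and both of which fail over $\C$. Once the identification with $C_c(\GG,\fP)$ is secured, the simplicity equivalence is a citation; the only care needed is that the invoked theorem is stated over a general base field (in particular in positive characteristic $p$) and for Hausdorff ample groupoids without a second-countability hypothesis, matching the generality of $\GG$ here.
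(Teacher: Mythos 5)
Your proposal is correct and follows essentially the same route as the paper: compute $\OO_p(\GG)/p\,\OO_p(\GG)$, identify it with the Steinberg algebra over $\fP$, and invoke Steinberg's simplicity theorem (\cite{Steinberg:Simplicity}*{Theorem 3.5}). The only cosmetic difference lies in the identification step --- the paper extends the reduction map $\zP\GG \to \fP\GG$ by continuity from the dense Steinberg subalgebra and checks its kernel is $p\,\OO_p(\GG)$, whereas you reduce pointwise using the concrete description $\OO_p(\GG)\cong C_0(\GG,\zP)$ and observe that $C_0(\GG,\fP)=C_c(\GG,\fP)$; both yield the same isomorphism.
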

\begin{proof}
Let $R$ be a ring. Let $R\GG$ be the $R$-algebra spanned by the characteristic functions $\chi_U$ with $U$ being compact open local bisections. We define the product in this algebra by the convolution
\begin{align*}
    \phi \ast \psi (g) = \sum_{s(h) = s(g)} \phi(gh^{-1})\psi(h)
\end{align*}
It is well-known that this sum is finite.

The canonical projection $\pi: \zP \to \fP$ induces a surjective morphism
\begin{align*}
    \tilde{\pi} : \zP \GG &\twoheadrightarrow \fP\GG \\
    \chi_U &\mapsto \chi_U
\end{align*}
whose kernel is the principal ideal $p \zP \GG$. Consider $\fP \GG$ with the discrete topology and $\zP \GG$ with the sup norm. To see that $\tilde{\pi}$ is continuous, let $\psi \in \tilde{\pi}^{-1}(\phi)$. Consider the open neighborhood $V = \BB(\psi, \frac{1}{p})$, then for every $\theta \in V$ the element $\theta - \psi$ is a multiple of $p$, hence 
\begin{align*}
    \tilde{\pi}(\theta) = \tilde{\pi}(\psi) = \phi
\end{align*}
and $\psi \in V \subseteq \tilde{\pi}^{-1}(\phi)$, showing that $\tilde{\pi}^{-1}(\phi)$ is open. 

The algebra $\zP \GG$ is dense in $\OO_p(\GG)$ so that any element $\phi \in \OO_p (\GG)$ may be written as a limit $\phi = \lim_n \phi_n$ with $\phi_n \in \zP \GG$. As the sequence $(\phi_n)_n$ is Cauchy, there exists $n_0 \in \nN$ such that for every $n, m > n_0$, we have
\begin{align*}
    ||\phi_n - \phi_m|| < \frac{1}{p}.
\end{align*}
Consequently, the difference $\phi_n - \phi_m$ is a multiple of $p$ and
    \(\tilde{\pi}(\phi_n) = \tilde{\pi}(\phi_m)\), using which we may conclude that the sequence $( \tilde{\pi}(\phi_n))_n$ is eventually constant. Define
\begin{align*}
    \overline{\pi}(\phi) = \lim_n \tilde{\pi}(\phi_n), 
\end{align*}
which is continuous by definition. The map $\overline{\pi}$ extends $\tilde{\pi}$, and it is easy to see that its kernel is \(p \OO_p(\GG)\). Consequently, we have 

\begin{align*}
    \frac{\OO_p(\GG)}{p\OO_p(\GG)} \cong \fP \GG,
\end{align*}
and by \cite{Steinberg:Simplicity}*{Theorem 3.5}, this happens if and only if $\GG$ is effective and minimal.
\end{proof}

\section{A $p$-adic GNS Theorem}
\label{sec:gns}

Recall that following \cite{BGM:padicI}*{Corollary~4.5}, we have an isomorphism
\begin{align*}
    \BB_{\leq 1}(\qP(X)) \cong \BB(c_0(X,\zP)).
\end{align*}
Consequently, a Banach $\ast$-algebra over $\zP$ is a $p$-adic operator algebra if and only if it can be represented isometrically on $c_0(X,\zP)$ carrying the canonical pairing, that is, the pairing given by
$$\braket{\xi}{\eta}=\sum_{x\in X}\xi(x)\eta(x).$$
This indicates that the spaces $c_0(X,\zP)$ could also serve as models for $p$-adic Hilbert spaces. While this might not be the final definition, we propose in this section a general definition of what we call a `quasi-Hilbert space' (over the $p$-adic integers), and prove an analogue of the GNS-representation theory of Banach $*$-algebras over $\zP$. Along the way, we will show that if we expand the range of pairings on $c_0(X,\zP)$, we can represent virtually every Banach $\ast$-algebra (see Theorem \ref{thm:GNS}) on a quasi-Hilbert space.

Recall that a $\zP$-module $M$ is \emph{$p$-adically complete} if the canonical map
\[
M \longrightarrow \varprojlim_{n} M/p^nM
\]
is an isomorphism. The following theorem shows that the structure of any discretely valued, \(p\)-adically complete Banach \(\zP\)-module is particularly simple:

\begin{theorem}\label{thm:everyone-is-c0}
Let $M$ be a $p$-adically complete Banach $\zP$\nb-module whose norm takes values in $|\zP|$ (i.e.\ $\|M\|\subseteq|\zP|$). Then there exists a set $X$ and an isometric isomorphism of Banach $\zP$-modules
\[
M \cong c_0(X,\zP).
\]
\end{theorem}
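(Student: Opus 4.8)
The plan is to produce an ``orthonormal basis'' for $M$ by lifting an $\Fp$-basis of the reduction $M/pM$, and then to identify $M$ with $c_0(X,\zP)$ via the coordinates in this basis, where $X$ indexes the chosen basis. The model to keep in mind is that in $c_0(X,\zP)$ the canonical family $(\delta_x)$ satisfies $\|\sum_x a_x\delta_x\|=\sup_x|a_x|$ and $\{\,\xi:\|\xi\|\le p^{-n}\,\}=p^n c_0(X,\zP)$; essentially the whole proof amounts to transporting these two features to $M$.

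First I would isolate the crucial structural lemma, which is the real content: the norm filtration of $M$ coincides with its $p$\nobreakdash-adic filtration,
\[
\{\,v\in M : \|v\|\le p^{-n}\,\} = p^nM \qquad (n\ge 0).
\]
The inclusion $\supseteq$ is immediate from $\|pv\|\le|p|\,\|v\|$. For $\subseteq$ one exploits that the norm takes values in $|\zP|$ (so $\|v\|<1$ already forces $\|v\|\le p^{-1}$) together with $p$\nobreakdash-adic completeness in order to solve $v=pw$ with $w\in M$ whenever $\|v\|\le p^{-1}$, and then iterate. I expect \emph{this} to be the main obstacle: it is precisely the step that can fail for a general ultrametric $\zP$\nobreakdash-module, and it must genuinely use both the discreteness hypothesis $\|M\|\subseteq|\zP|$ and completeness (in the paper's notion of Banach $\zP$-module this should amount to the norm being the $p$-adic gauge); everything afterwards is formal. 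In particular the lemma yields the dictionary we need: $v\notin pM$ if and only if $\|v\|=1$, and, using the isometry $\|\lambda v\|=|\lambda|\,\|v\|$, that $M$ is $p$-torsion-free.

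Granting the lemma I would argue as follows. The reduction $\ovr M:=M/pM$ is an $\Fp$\nobreakdash-vector space; choose a basis $(\ovr e_x)_{x\in X}$ and lift each $\ovr e_x$ to some $e_x\in M$, which then has $\|e_x\|=1$ since $e_x\notin pM$. Define
\[
\Phi\colon c_0(X,\zP)\longrightarrow M,\qquad \Phi\big((a_x)_x\big)=\sum_{x\in X}a_x e_x.
\]
As $a_x\to 0$ and $M$ is complete, the finite partial sums form a Cauchy net, so $\Phi$ is a well-defined $\zP$\nobreakdash-linear map. The isometry $\|\Phi(a)\|=\sup_x|a_x|$ follows from the lemma by reduction to the finite case: writing $\sup_x|a_x|=p^{-m}$ and factoring $a_x=p^m b_x$ with $\sup_x|b_x|=1$, the combination $\sum_x b_x e_x$ has nonzero image in $\ovr M$ (the $\ovr e_x$ are independent and some $\ovr b_x\ne 0$), hence norm exactly $1$ by the lemma, so $\|\Phi(a)\|=p^{-m}\|\sum_x b_x e_x\|=p^{-m}$. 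In particular $\Phi$ is injective.

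Finally, surjectivity is obtained by successive approximation along the $p$\nobreakdash-adic filtration. Given $m\in M$, its class in $\ovr M$ is a finite combination $\sum_x\ovr{a^{(0)}_x}\,\ovr e_x$; subtracting the corresponding lift places $m-\sum_x a^{(0)}_x e_x$ into $pM$, where by the lemma it equals $p m_1$ with $m_1\in M$, and one repeats with $m_1$. Collecting coefficients produces $a_x=\sum_{k\ge 0}p^k a^{(k)}_x\in\zP$ with $(a_x)_x\in c_0(X,\zP)$ and $\Phi((a_x)_x)=m$ by completeness, so $\Phi$ is onto. Hence $\Phi$ is an isometric isomorphism $M\cong c_0(X,\zP)$. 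The only genuinely non-formal ingredient is the structural lemma above; the construction of the basis, the convergence of $\Phi$, the isometry estimate, and the approximation scheme are then routine once that lemma is in hand.
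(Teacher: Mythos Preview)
Your proposal is correct and follows essentially the same approach as the paper: lift an $\Fp$-basis of $M/pM$ to norm-one elements of $M$, define $\Phi\colon c_0(X,\zP)\to M$ by $\xi\mapsto\sum_x\xi(x)e_x$, and verify isometry and surjectivity using the identification of the norm filtration with the $p$-adic filtration. The paper's proof is terser (it defers to \cite{Cortinas-Meyer-Mukherjee:NAHA}*{Theorem~2.4.2} for the details), whereas you make the structural lemma $\{v:\|v\|\le p^{-n}\}=p^nM$ explicit and correctly flag it as the only non-formal ingredient; otherwise the two arguments coincide.
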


\begin{proof}
This is essentially \cite{Cortinas-Meyer-Mukherjee:NAHA}*{Theorem 2.4.2}. Set $pM=\{pm:m\in M\}$ and consider the $\fF_p$-vector space $V:=M/pM$. Choose a basis $\{\bar\omega_x:x\in X\}$ of $V$ and lift each $\bar\omega_x$ to $\omega_x\in M$ (so necessarily $\|\omega_x\|=1$). Define
\[
\Phi\colon c_0(X,\zP)\to M,\qquad \Phi(\xi)=\sum_{x\in X}\xi(x)\omega_x.
\]
The map is well-defined because $\xi\in c_0(X,\zP)$ has values tending to $0$, hence the sum converges in $M$ by ultrametricity. One checks that $\Phi$ is $\zP$-linear and isometric: the ultrametric inequality yields $\|\Phi(\xi)\|\le\|\xi\|$, while the choice of lifts implies that if $\|\xi\|=1$ then $\Phi(\xi)\not\in pM$, hence $\|\Phi(\xi)\|=1$. Surjectivity follows from $p$-adic completeness: every $m\in M$ can be written (uniquely) as a convergent $p$-adic series in the lifted basis elements, so $m$ lies in the range of $\Phi$. This gives the claimed isometric identification.
\end{proof}

\begin{remark}\label{rem:divisibility-and-norms}
If $M\cong c_0(X,\zP)$ is a discretely valued $p$-adically complete Banach $\zP$-module, then:

\begin{enumerate}
    \item An element $\xi\in M$ satisfies $\|\xi\|<1$ if and only if $\xi$ is divisible by $p$, i.e.\ $\xi=p\eta$ for some $\eta\in M$. In this case one may unambiguously write $p^{-1}\xi:=\eta\in M$.
    
    \item The normalisation of an element \(\xi \in M\) is given by \(\eta = p^{-\nu_p(\xi)}\xi\) (rather than $\xi/\|\xi\|$ as in the Archimedean case), where \(\nu_p(\xi)\) is the \(p\)-adic valuation of \(\xi\).
\end{enumerate}
\end{remark}

\begin{definition}
A \textit{quasi-Hilbert space} is a $p$-adically complete Banach $\zP$-module $\HH$ with $\|\HH\|\subseteq|\zP|$, together with a nondegenerate symmetric bilinear form $\langle -,- \rangle$ such that
\[
\sup_{\|\eta\|\leq 1}\,|\langle \eta,\xi\rangle| \;=\;\|\xi\|
\]
for every $\xi\in\HH$. 
\end{definition}

The pairing associated with a quasi-Hilbert space $\HH$ satisfies the Cauchy–Schwarz inequality and is therefore continuous. Indeed, take $\xi,\eta\in\HH$ with $\xi\neq 0$ and denote by $\eta = p^{-\nu_p(\xi)}\xi$ the normalisation. We then have
\[
|\langle \xi,\eta\rangle|
= |p^n \langle \zeta,\eta\rangle|
\leq |p^n|\,\|\zeta\|\,\|\eta\|
= p^{-n}\|\eta\|
= \|\xi\|\,\|\eta\|.
\]
Thus the $p$-adic analogue of the Cauchy–Schwarz inequality holds.

\begin{example}
A basic example is $\HH=c_0(X,\zP)$, the Banach $\zP$-module of $\zP$-valued functions on $X$ vanishing at infinity, 
endowed with the supremum norm and the canonical pairing
\[
\braket{\xi}{\eta}:=\sum_{x\in X}\xi(x)\eta(x).
\]
By Theorem~\ref{thm:everyone-is-c0}, every quasi-Hilbert space is isometrically isomorphic, as a Banach $\zP$-module, 
to some $c_0(X,\zP)$, although the pairing may differ from the canonical one above. 

With respect to the canonical pairing, the functions $(\delta_x)_{x\in X}$ form an ``orthonormal basis'': each $\delta_x$ has norm one, 
\[
\braket{\delta_x}{\delta_y}=0\quad(x\neq y), \qquad 
\braket{\delta_x}{\delta_x}=1.
\]
Moreover, $(\delta_x)_{x\in X}$ is an ``orthonormal basis'' in the sense that their $\zP$-linear span is dense in $c_0(X,\zP)$, and every element 
$\xi\in c_0(X,\zP)$ admits the expansion
\[
\xi=\sum_{x\in X}\braket{\xi}{\delta_x}\,\delta_x.
\]
\end{example}

It is not clear whether arbitrary quasi-Hilbert spaces admit orthogonal bases. However, in the finite rank case such bases do exist, as we shall see in Theorem~\ref{theo:finite-rank-q-Hilb}.

\begin{definition}
Let $\HH$ be a quasi-Hilbert space. For a linear operator $T\colon \HH \to \HH$ we define its operator norm by
\[
\|T\| := \sup_{\|\xi\|\leq 1}\,\|T(\xi)\|.
\]
We denote by $\BB(\HH)$ the Banach $\ast$-algebra (over $\zP$) consisting of all bounded $\zP$-linear operators on $\HH$ which are 
\emph{adjointable} with respect to the pairing of $\HH$. 
Thus $T\in \BB(\HH)$ if and only if $T$ is $\zP$-linear, bounded, and there exists $T^*\in \BB(\HH)$ with
\[
\braket{\xi}{T(\eta)}=\braket{T^*(\xi)}{\eta}\qquad\forall\,\xi,\eta\in \HH.
\]
\end{definition}

We shall now prove a $p$-adic analogue of the GNS construction, replacing Hilbert spaces and states by quasi-Hilbert spaces and \emph{quasi-states}: 

\begin{definition}[Quasi-states]
A \emph{quasi-state} on a $\ast$-Banach algebra $A$ is a continuous $\zP$-linear functional 
$\phi\colon A\to \zP$ such that:
\begin{enumerate}
    \item $\phi(a^{\ast})=\phi(a)$ for all $a\in A$;
    \item $\|\phi\|=1$.
\end{enumerate}
\end{definition}

\begin{remark}
The above definition is motivated by the fact that on a (complex) unital C$^*$-algebra $A$, 
a state can equivalently be defined as a bounded linear functional $\phi\colon A\to \C$ with 
$\|\phi\|=\phi(1)=1$. In our setting, however, we drop the condition $\phi(1)=1$, even if the algebra is unital. 
For instance, the functional $\phi(\lambda):=-\lambda$ on $A=\C$ is a ``quasi-state'' 
which is not positive, and hence not a state in the usual sense.

More generally, if we allow arbitrary (complex) unital Banach $*$-algebras, then even 
imposing $\|\phi\|=\phi(1)=1$ does not necessarily imply positivity 
($\phi(a^*a)\geq 0$ for all $a\in A$). A simple example is 
$A=\C\times\C$ with the coordinatewise product, the sup-norm, and the involution 
$(\alpha,\beta)^*:=(\bar\beta,\bar\alpha)$. The functional 
$\phi(\alpha,\beta):=\tfrac{\alpha+\beta}{2}$ satisfies $\phi(a^*)=\overline{\phi(a)}$ and 
$\|\phi\|=\phi(1)=1$, but it is not positive. 
Similar remarks apply to real Banach $*$-algebras.

Thus, a quasi-state should be regarded as a natural weakening of the usual notion of state, dropping positivity -- which has no good analogue in the 
$p$-adic setting -- in favor of the symmetry condition $\phi(a^*)=\phi(a)$.
\end{remark}

Let us begin with a class of Banach $^*$-algebras that admit a natural representation on quasi-Hilbert spaces.

\begin{definition}[Quasi-$C^*$-algebra]
A \emph{quasi-$C^*$-algebra over $\zP$} is a \(p\)-adically complete Banach $^*$-algebra $A$ such that:
\begin{itemize}
    \item the norm of $A$ takes values in $|\zP|$, i.e.\ $\|a\|\in|\zP|$ for all $a\in A$;
    \item the following condition holds:
    \begin{description}
        \item[(quasi-C*-property)] For every $a\in A$, there exists a quasi-state $\phi$ on $A$ such that
        \[
        \sup_{\|b\|,\|c\|\leq 1}\,|\phi(b^*ac)|=\|a\|.
        \]
    \end{description}
\end{itemize}
\end{definition}

\noindent
Not every Banach $^*$-algebra satisfies this property. The simplest counterexamples are Banach $^*$-algebras with zero multiplication ($ab=0$ for all $a,b\in A$), where the supremum above is always $0$ even if $\|a\|=1$. Finding nontrivial unital counterexamples is more subtle; an explicit example will be given below (see Remark~\ref{rem:not-quasi-C}).

\begin{remark}
The above quasi-\(C^*\)-property is motived by the following situation in the complex setting: for a complex \(C^*\)-algebra $A$ and $a\in A$,
\[
\|a\|=\sup_{\substack{\varphi\ \text{state on }A\\ \|b\|,\|c\|\le 1}} |\varphi(b^*ac)|.
\]
The proof proceeds by passing to the universal representation $\pi_{\mathrm{univ}}=\bigoplus_{\varphi\in S(A)}\pi_\varphi$, for which $\|\pi_{\mathrm{univ}}(a)\|=\|a\|$, and then selecting a summand $\pi_\varphi$ that approximates the norm. In the GNS-representation $(\pi_\varphi,\HH_\varphi,\xi_\varphi)$ the cyclicity of $\xi_\varphi$ implies that vectors of the form $\pi_\varphi(b)\xi_\varphi$ are dense in $\HH_\varphi$, so one can approximate unit vectors by such vectors and obtain
\[
\|a\|\approx|\langle\pi_\varphi(a)\pi_\varphi(c)\xi_\varphi,\pi_\varphi(b)\xi_\varphi\rangle|
=|\varphi(b^*ac)|.
\]
Letting the approximations tend to zero yields the claim. 
\end{remark}

Every quasi-state in a Banach $\ast$-algebra induces a $\ast$-representation on a quasi-Hilbert space.

\begin{theorem}[Representation associated to a quasi-state]\label{theo:correctedGNS001} Let $A$ be a unital Banach $\ast$-algebra which is $p$-adically complete, and let $\phi$ a quasi-state in $A$. Then there exists:
\begin{enumerate}
    \item a quasi-Hilbert space $\HH$ and a $\ast$-representation
    \begin{align*}
        \pi_{\phi} \colon A \to \BB(\HH_{\phi}).
    \end{align*}
    \item a cyclic vector $\xi_{\phi} \in \HH_{\phi}$ such that 
    \begin{align*}
        \phi(a) = \langle \xi_{\phi}, \pi_{\phi}(a)(\xi_{\phi}) \rangle_{\phi}.
    \end{align*}
\end{enumerate}
\end{theorem}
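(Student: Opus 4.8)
The plan is to mimic the classical GNS construction, replacing the positive-definite inner product by the $\zP$-bilinear form built from $\phi$ and Hilbert-space completion by $p$-adic completion. First I would define on $A$ the pairing $\langle a,b\rangle_\phi := \phi(a^*b)$. This is symmetric: since $(a^*b)^*=b^*a$ and $\phi(x^*)=\phi(x)$, we get $\langle b,a\rangle_\phi=\phi(b^*a)=\phi((a^*b)^*)=\phi(a^*b)=\langle a,b\rangle_\phi$. I would then introduce the radical $N_\phi:=\set{a\in A \mid \phi(b^*a)=0 \text{ for all } b\in A}$; by symmetry this coincides with the left radical, and the identity $\langle ca,b\rangle_\phi=\phi(a^*c^*b)=\langle a,c^*b\rangle_\phi$ shows that $N_\phi$ is a closed left ideal (closed since $\phi$ is continuous). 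Quotienting by a left ideal is exactly what is needed for left multiplication to descend to $A/N_\phi$.

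Next I would equip $A/N_\phi$ with the norm $\|[a]\|:=\sup_{\|b\|\leq 1}|\phi(b^*a)|$, which lands in $|\zP|$ (a supremum of elements of the discrete set $|\zP|$ is attained), and let $\HH_\phi$ be its $p$-adic completion; by Theorem~\ref{thm:everyone-is-c0} this is isometrically $c_0(X,\zP)$ for some $X$. The form descends to $A/N_\phi$ (if $a\in N_\phi$ then $\phi(b^*a)=0$) and is nondegenerate there by construction; I would extend it continuously to $\HH_\phi$ using the bound $|\langle a,b\rangle_\phi|\leq \|a\|\,\|[b]\|$, which follows because $x\mapsto\phi(x^*b)$ is a $\zP$-linear functional of norm $\|[b]\|$. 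The representation is then left multiplication, $\pi_\phi(c)[a]:=[ca]$. It is bounded with $\|\pi_\phi(c)\|\leq\|c\|$ because $\phi(b^*ca)=\phi((c^*b)^*a)$ together with $\|c^*b\|\leq\|c\|\,\|b\|$; it is multiplicative and unital; and it is a $\ast$-homomorphism into $\BB(\HH_\phi)$ since $\langle [b],\pi_\phi(c)[a]\rangle_\phi=\phi(b^*ca)=\phi((c^*b)^*a)=\langle\pi_\phi(c^*)[b],[a]\rangle_\phi$, so each $\pi_\phi(c)$ is adjointable with $\pi_\phi(c)^*=\pi_\phi(c^*)$.

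For the cyclic vector I would take $\xi_\phi:=[1]$, which makes sense because $A$ is unital. Then $\pi_\phi(a)\xi_\phi=[a]$, so $\pi_\phi(A)\xi_\phi=A/N_\phi$ is dense in $\HH_\phi$ (cyclicity), and $\langle\xi_\phi,\pi_\phi(a)\xi_\phi\rangle_\phi=\langle[1],[a]\rangle_\phi=\phi(1^*a)=\phi(a)$, giving the desired reconstruction of $\phi$.

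The main obstacle is verifying that $(\HH_\phi,\langle-,-\rangle_\phi)$ is genuinely a quasi-Hilbert space, i.e.\ the self-duality axiom $\sup_{\|\eta\|\leq 1}|\langle\eta,\xi\rangle_\phi|=\|\xi\|$. One inequality is automatic from the chosen norm: the contractions of $A$ map into the unit ball of $\HH_\phi$, so $\sup_{\|\eta\|\leq 1}|\langle\eta,\xi\rangle_\phi|\geq\sup_{\|b\|\leq 1}|\phi(b^*\xi)|=\|\xi\|$. The reverse inequality is the delicate point: it amounts to showing that the form-induced norm $\|[a]\|$ coincides with the quotient norm $\inf_{n\in N_\phi}\|a-n\|$ on $A/N_\phi$, equivalently that every $\eta\in\HH_\phi$ with $\|\eta\|\leq 1$ admits, modulo $N_\phi$, a representative $b\in A$ with $\|b\|\leq 1$ (so that $|\langle\eta,\xi\rangle_\phi|=|\phi(b^*\xi)|\leq\|\xi\|$). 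This is where I expect to use the quasi-state normalisation $\|\phi\|=1$, unitality, submultiplicativity, and crucially the discreteness of the valuation to select norm-one representatives; this compatibility of the two norms is the crux on which the construction rests, and I would isolate it as a separate lemma.
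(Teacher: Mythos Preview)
Your construction matches the paper's: the left ideal $N_\phi$, the seminorm $\|[a]\|_\phi=\sup_{\|b\|\le1}|\phi(b^*a)|$, the form $\phi(a^*b)$, left multiplication as the representation, and $\xi_\phi=[1]$ are exactly what the paper uses.

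Your analysis of the ``main obstacle'' can be sharpened, though. The self-duality inequality you call delicate is in fact immediate once you recall that a $p$-adically complete Banach $\zP$-algebra has $\|a\|\le1$ for \emph{every} $a\in A$: the constraint $\|b\|\le1$ in the definition of $\|[c]\|_\phi$ is then vacuous, so $|\langle[b],[c]\rangle|=|\phi(b^*c)|\le\|[c]\|_\phi$ holds for all $b\in A$, and by symmetry also $\le\|[b]\|_\phi$. This already gives continuity of the pairing in the $\phi$-norm and, by density, the inequality $\sup_{\|\eta\|\le1}|\langle\eta,\xi\rangle|\le\|\xi\|_\phi$ on the completion---no comparison with the quotient norm is needed. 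The statement you isolate (that $\|[a]\|_\phi<1$ forces a representative $a'\in a+N_\phi$ with $\|a'\|_A<1$, equivalently form-norm $=$ quotient norm) is instead what is required for the \emph{other} quasi-Hilbert axiom, $p$-adic completeness of $\HH_\phi$: one needs $\|[a]\|_\phi<1\Rightarrow[a]\in p(A/N_\phi)$. This is precisely the step the paper records (``then there exists an element $b\in I_\phi$ such that $\|a+b\|<1$'') en route to showing the norm on $\HH_\phi$ is the $p$-adic one. So you have identified the right auxiliary claim and are right to flag it as a separate lemma; it just enters for $p$-adic completeness rather than for self-duality.
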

\begin{proof}
Consider the left ideal \(I_{\phi} = \set{a \in A \mid \phi(b^{\ast}a) = 0 \ \forall b \in A}\) and the quotient $\zP$-module $A/I_{\phi}$. Then the assignment
\begin{align*}
    \|a + I_{\phi}\|_{\phi} = \sup_{\|b\| \leq 1} |\phi(b^{\ast}a)|
\end{align*} is a well-defined ultrametric norm. Indeed, if $a_0, a_1 \in A$ satisfy that $a_0 - a_1 \in I_{\phi}$, then $\phi(b^{\ast}a_0) = \phi(b^{\ast}a_1)$ for every $b \in A$.

If $\|a + I_{\phi}\|_{\phi} = 0$, then $\phi(b^{\ast}a) = 0$ for every $b$ with $\|b\| \leq 1$, but this implies that $a \in I_{\phi}$.

The property that $\|\lambda(a + I_{\phi})\|_{\phi} = |\lambda| \|a + I_{\phi}\|_{\phi}$ for all $a \in A$ and $\lambda \in k$ is immediate.

Let $a_0, a_1 \in A$, then
\begin{align*}
    \|a_0 + a_1 + I_{\phi}\|_{\phi} &= \sup_{\|b\| \leq 1} |\phi(b^{\ast}(a_0 + a_1))| \\
    &= \sup_{\|b\| \leq 1} |\phi(b^{\ast}a_0) + \phi(b^{\ast}a_1)| \\
    &\leq \sup_{\|b\| \leq 1} \max\set{|\phi(b^{\ast}a_0)|, |\phi(b^{\ast}a_1)|} \\
    &\leq \max\set{\sup_{\|b\| \leq 1} |\phi(b^{\ast}a_0)|, \sup_{\|b\| \leq 1} |\phi(b^{\ast}a_1)|} \\
    &= \max{\set{\|a_0 + I_{\phi}\|_{\phi}, \|a_1 + I_{\phi}\|_{\phi}}}
\end{align*}
which proves the ultrametric inequality.

Let $\HH_{\phi}$ be the completion of $A/I_{\phi}$ with the norm $\|\cdot\|_{\phi}$. In the quotient $A/I_{\phi}$ one can define the following symmetric nondegenerate bilinear form:
\begin{align*}
    \langle b + I_{\phi}, a + I_{\phi} \rangle_{\phi} = \phi(b^{\ast}a).
\end{align*}
Since $\phi$ is continuous and $\zP$ is complete, $\phi$ sends convergent sequences to convergent sequences, therefore one can extend $\langle \cdot, \cdot \rangle_\phi$ uniquely to $\HH_{\phi}$.

To see that the Banach module \(\HH_\phi\) is \(p\)-adically complete, let $a + I_{\phi} \in A/I_{\phi}$ be an element with $\|a + I_{\phi} \| < 1$. Then there exists an element $b \in I_{\phi}$ such that $\|a + b\| < 1$. As $A$ is $p$-adically complete, there exists an element $c \in A$ with $a + b = pc$. Thus $a + I_{\phi} = p(c + I_{\phi})$. We conclude that the quotient norm in $\HH_{\phi}$ is the $p$-adic norm and therefore is $p$-adically complete as the completion of a $p$-adically complete normed $\zP$-module is $p$-adically complete.

We now construct the canonical representation associated to \(\phi\). For every $a \in A$, define
\begin{align*}
    \pi_{\phi}(a) \colon \HH_{\phi} &\to \HH_{\phi}, \\
    c + I_{\phi} &\mapsto ac + I_{\phi}.
\end{align*}
We have
\begin{align*}
    \sup_{\|c\| \leq 1} \|\pi_{\phi}(a)(c)\|_{\phi} = \sup_{\|c\| \leq 1} \|ac + I_{\phi}\|_{\phi} = \sup_{\|c\| \leq 1} \sup_{\|b\| \leq 1} |\phi(b^{\ast}ac)| \leq \|a\| 
\end{align*}
where the last inequality comes from the fact that $\phi$ is contractive. This way we conclude that $\pi_{\phi}(a)$ is a bounded operator on $\HH_{\phi}$ and that the association
\begin{align*}
    \pi_{\phi} \colon A &\to \BB(\HH_{\phi}), \\
    a &\to \pi_{\phi}(a),
\end{align*}
is a contractive linear map. It is also clearly multiplicative and involutive, so a $\ast$-representation.

The cyclic vector mentioned is
\begin{align*}
    \xi_{\phi} = 1 + I_{\phi}.
\end{align*}
Note that it satisfies
\begin{align*}
    \phi(a) = \langle \xi_{\phi}, \pi_{\phi}(a)(\xi_{\phi}) \rangle_{\phi}.
\end{align*}\vskip -1,5pc
\end{proof}

The following proposition shows that the direct sum of quasi-Hilbert spaces can be naturally considered.

\begin{proposition} 
Let \(\{\HH_i = (c_0(X_i, \zP), \langle \cdot , \cdot \rangle_i)\}_{i \in I}\) be a family of quasi-Hilbert spaces. Define
\[
\HH = \bigoplus_{i \in I} \HH_i := \left(c_0\left(\bigsqcup_{i \in I} X_i, \zP\right), \langle \cdot , \cdot \rangle \right)
\quad \text{with} \quad
\langle f, g \rangle = \sum_{i \in I} \langle f|_{X_i}, g|_{X_i} \rangle_i.
\]
Then \(\HH\) is a quasi-Hilbert space.
\end{proposition}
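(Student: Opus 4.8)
The plan is to verify, one property at a time, the three defining features of a quasi-Hilbert space for the pair $\bigl(c_0(\bigsqcup_i X_i,\zP),\langle\cdot,\cdot\rangle\bigr)$: $p$-adic completeness with $\|\HH\|\subseteq|\zP|$, symmetry and nondegeneracy of the form, and the norm-recovery identity. The underlying Banach $\zP$-module is itself of the form $c_0(X,\zP)$ with $X=\bigsqcup_i X_i$, so its $p$-adic completeness and the inclusion $\|\HH\|\subseteq|\zP|$ are immediate from Theorem~\ref{thm:everyone-is-c0} and the structure of $c_0$-spaces. The elementary observation I would record first is that the sup-norm decomposes over the disjoint pieces: for $f\in\HH$ one has $\|f\|=\sup_{i\in I}\| f|_{X_i}\|_i$, where $\|\cdot\|_i$ is the sup-norm on $c_0(X_i,\zP)$. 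Moreover, since $f$ vanishes at infinity, for each $\varepsilon>0$ only finitely many $X_i$ meet the finite set $\{x:|f(x)|>\varepsilon\}$, so $\| f|_{X_i}\|_i\to 0$ along $I$ (only finitely many exceed any given $\varepsilon>0$).

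The first substantive point is well-definedness of the pairing, i.e.\ summability of $\langle f|_{X_i},g|_{X_i}\rangle_i$ over $I$ in $\zP$. For this I would invoke the Cauchy--Schwarz inequality established above for each $\HH_i$, giving $|\langle f|_{X_i},g|_{X_i}\rangle_i|\le\| f|_{X_i}\|_i\,\| g|_{X_i}\|_i\le\|g\|\cdot\| f|_{X_i}\|_i$. Combined with $\| f|_{X_i}\|_i\to 0$, the summands tend to $0$, so the family is summable and the series converges in the complete ring $\zP$ by the non-Archimedean summability criterion. Bilinearity and symmetry then follow termwise from the corresponding properties of each $\langle\cdot,\cdot\rangle_i$, using that a summable $p$-adic family may be split and reindexed freely.

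The heart of the argument is the norm-recovery identity $\sup_{\|\eta\|\le1}|\langle\eta,\xi\rangle|=\|\xi\|$ for every $\xi\in\HH$ (which also yields nondegeneracy, since $\|\xi\|>0$ forces some $\eta$ with $\langle\eta,\xi\rangle\ne0$). For the inequality $\le$ I would use the ultrametric bound $|\langle\eta,\xi\rangle|\le\max_i|\langle\eta|_{X_i},\xi|_{X_i}\rangle_i|$ together with Cauchy--Schwarz on each piece and $\|\eta|_{X_i}\|_i\le 1$, so each term is $\le\|\xi|_{X_i}\|_i\le\|\xi\|$. For $\ge$ I would exploit discreteness of $|\zP|$: since $\xi\in c_0$, the value $\|\xi\|=\sup_i\|\xi|_{X_i}\|_i$ is attained at some index $i_0$ with $\|\xi|_{X_{i_0}}\|_{i_0}=\|\xi\|$; applying the quasi-Hilbert norm condition of $\HH_{i_0}$ (whose supremum is again attained, as its values lie in the discrete set $|\zP|$) produces $\zeta\in\HH_{i_0}$ with $\|\zeta\|_{i_0}\le1$ and $|\langle\zeta,\xi|_{X_{i_0}}\rangle_{i_0}|=\|\xi\|$. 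Extending $\zeta$ by zero on every other $X_i$ yields $\eta\in\HH$ with $\|\eta\|\le1$ and $\langle\eta,\xi\rangle=\langle\zeta,\xi|_{X_{i_0}}\rangle_{i_0}$, whence $|\langle\eta,\xi\rangle|=\|\xi\|$, completing the equality.

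I expect the main obstacle to be bookkeeping rather than conceptual: carefully justifying the interchange of the (possibly uncountable) index sum with the bilinear operations, and establishing the decay $\| f|_{X_i}\|_i\to0$ that underpins both summability and the attainment of suprema. The extension-by-zero construction of the optimal test vector $\eta$ is the one genuinely non-formal step, and it relies crucially on the discreteness of the value group $|\zP|$ to turn the quasi-Hilbert suprema into attained maxima; without this, the $\ge$ direction would only yield an approximate inequality.
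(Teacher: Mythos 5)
Your proof is correct. In fact, the paper states this proposition without any proof at all, so there is nothing to compare against; your argument supplies precisely the verification the authors leave implicit. The two genuinely non-trivial points are exactly the ones you isolate: (i) well-definedness of the pairing, which needs the Cauchy--Schwarz inequality on each $\HH_i$ (established in the paper right after the definition of quasi-Hilbert space) together with the decay $\|f|_{X_i}\|_i\to 0$ along the cofinite filter on $I$, giving summability in the complete ring $\zP$; and (ii) the lower bound in the norm-recovery identity, where discreteness of $|\zP|$ is used twice -- once to see that $\sup_i\|\xi|_{X_i}\|_i$ is attained at some $i_0$, and once to see that the supremum in the norm condition for $\HH_{i_0}$ is attained -- so that the extension-by-zero test vector yields exact equality $|\langle\eta,\xi\rangle|=\|\xi\|$ rather than merely an approximation. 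One cosmetic remark: in the non-Archimedean setting, approximate attainment would actually suffice for the $\geq$ direction (a supremum is still a supremum), so the appeal to discreteness, while clean and correct, is slightly stronger than strictly needed; but since the definition of quasi-Hilbert space already forces $\|\HH\|\subseteq|\zP|$, your use of it is entirely appropriate.
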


We denote the quasi-Hilbert space constructed above by \(\bigoplus_i \HH_i = \HH\).

Finally, we are ready to discuss the first part of the main result of this section.

\begin{theorem}[p-adic GNS: First part] 
Let \(A\) be a unital quasi-\(\Cest\)-algebra. Then there exist a quasi-Hilbert space \(\HH\) and an isometric \(\ast\)-representation
\[
\pi \colon A \to \BB(\HH).
\]
\end{theorem}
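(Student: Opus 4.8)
The plan is to build a universal-type representation by assembling the quasi-state representations of Theorem~\ref{theo:correctedGNS001} over all quasi-states, in direct analogy with the classical universal representation $\bigoplus_\varphi \pi_\varphi$. Let $S(A)$ denote the set of all quasi-states on $A$. For each $\phi\in S(A)$, since $A$ is unital and $p$-adically complete (being a quasi-$C^*$-algebra), Theorem~\ref{theo:correctedGNS001} furnishes a quasi-Hilbert space $\HH_\phi$ and a $\ast$-representation $\pi_\phi\colon A\to\BB(\HH_\phi)$ which, as established in its proof, is contractive. I would then set
\[
\HH := \bigoplus_{\phi\in S(A)}\HH_\phi, \qquad \pi := \bigoplus_{\phi\in S(A)}\pi_\phi,
\]
where $\HH$ is a quasi-Hilbert space by the preceding proposition on direct sums.

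First I would check that $\pi$ is a well-defined $\ast$-representation into $\BB(\HH)$. Since each summand satisfies $\|\pi_\phi(a)\|\leq\|a\|$, the operator $\pi(a)$ is bounded with $\|\pi(a)\|=\sup_\phi\|\pi_\phi(a)\|\leq\|a\|$; in particular the uniform bound propagates the vanishing-at-infinity condition, so $\pi(a)$ maps the $c_0$-space over $\bigsqcup_\phi X_\phi$ into itself. Adjointability is inherited summand-wise: from $\pi_\phi(a)^\ast=\pi_\phi(a^\ast)$ one obtains $\langle\pi(a)f,g\rangle=\langle f,\pi(a^\ast)g\rangle$, so that $\pi(a)^\ast=\pi(a^\ast)\in\BB(\HH)$ and $\pi$ is a contractive $\ast$-homomorphism.

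The heart of the argument is the reverse inequality $\|\pi(a)\|\geq\|a\|$, and this is precisely where the quasi-$C^*$-property enters. Given $a\in A$, that property supplies a quasi-state $\phi$ with $\sup_{\|b\|,\|c\|\leq 1}|\phi(b^\ast ac)|=\|a\|$. For any $c\in A$ with $\|c\|\leq 1$ the class $c+I_\phi$ lies in the unit ball of $\HH_\phi$, since $\|c+I_\phi\|_\phi=\sup_{\|b\|\leq 1}|\phi(b^\ast c)|\leq\|c\|\leq 1$. Evaluating the operator norm on these particular vectors then yields
\[
\|\pi_\phi(a)\|\;\geq\;\sup_{\|c\|\leq 1}\|\pi_\phi(a)(c+I_\phi)\|_\phi\;=\;\sup_{\|c\|\leq 1}\sup_{\|b\|\leq 1}|\phi(b^\ast ac)|\;=\;\|a\|.
\]
Combined with contractivity this forces $\|\pi_\phi(a)\|=\|a\|$, whence $\|\pi(a)\|=\sup_\psi\|\pi_\psi(a)\|=\|a\|$. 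As $a$ was arbitrary, $\pi$ is isometric.

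The only genuinely delicate point is the bookkeeping for the direct sum: one must confirm that the norm of $\bigoplus_\phi\pi_\phi(a)$ equals the supremum of the summand norms and that the image again lies in the $c_0$-space over $\bigsqcup_\phi X_\phi$. Both follow from ultrametricity together with the defining vanishing-at-infinity condition, since the uniform bound $\|\pi_\phi(a)\|\leq\|a\|$ transfers the decay of $f$ to $\pi(a)f$. I do not anticipate a serious obstacle beyond this, as the analytic substance of the theorem has already been packaged into Theorem~\ref{theo:correctedGNS001} and into the quasi-$C^*$-property.
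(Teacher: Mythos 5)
Your proposal is correct and follows essentially the same route as the paper: form the direct sum $\HH=\bigoplus_{\phi}\HH_{\phi}$ over all quasi-states, note that each $\pi_{\phi}$ from Theorem~\ref{theo:correctedGNS001} is contractive, and use the quasi-$C^*$-property to produce, for each $a$, a quasi-state $\phi$ with $\sup_{\|b\|,\|c\|\le 1}|\phi(b^*ac)|=\|a\|$, which via the identity $\|ac+I_\phi\|_\phi=\sup_{\|b\|\le1}|\phi(b^*ac)|$ forces $\|\pi_\phi(a)\|=\|a\|$. Your version is, if anything, slightly more careful than the paper's, since you make explicit both the squeeze against contractivity and the fact that the vectors $c+I_\phi$ with $\|c\|\le1$ lie in the unit ball of $\HH_\phi$.
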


\begin{proof}
Define
\[
\HH = \bigoplus_{\phi \text{ quasi-state}} \HH_{\phi},
\]
where \(\HH_{\phi}\) is the quasi-Hilbert space associated with the quasi-state \(\phi\).

Consider the map
\[
\pi \colon A \to \BB(\HH), \quad a \mapsto (\pi_{\phi}(a))_{\phi}.
\]

It is clear that \(\pi\) is a contractive \(\ast\)-representation; it remains to show that \(\pi\) is isometric.

Fix \(a \in A\). By the quasi-C*-property, there exists a quasi-state \(\phi\) such that
\[
\begin{aligned}
\|a\| &= \sup_{\|b\|, \|c\| \leq 1} |\phi(b^{\ast} a c)| \\
&= \sup_{\|b\|, \|c\| \leq 1} \left|\langle b + I_{\phi}, ac + I_{\phi} \rangle_{\phi}\right| \\
&= \sup_{\|b\|, \|c\| \leq 1} \left|\langle b + I_{\phi}, \pi_{\phi}(a)(c) \rangle_{\phi}\right| \\
&= \sup_{\|c\| \leq 1} \|\pi_{\phi}(a)(c)\| \\
&= \|\pi_{\phi}(a)\|.
\end{aligned}
\]

This concludes the proof.
\end{proof}

To complete our main theorem, we show that every Banach $^*$-algebra $A$ whose norm takes values in $|\Z_p|$ (that is, $\|A\|\subseteq|\Z_p|$) can be represented isometrically as a $^*$-subalgebra of some quasi-C*-algebra. It will then follow that every such Banach $*$-algebra is isomorphic to a subalgebra of $\BB(\HH)$ for some quasi-Hilbert space $\HH$.

\begin{definition}[Ultra-antisymmetric elements] 
Let \(A\) be a Banach \(\ast\)-algebra. An element \(a \in A\) with \(\|a\|=1\) is called \emph{ultra-antisymmetric relative to \(A\)} if
\[
\|\,b^* a c + c^* a^* b\,\| < 1 \qquad \text{for all } b,c\in A \text{ with }\|b\|,\|c\|\leq 1.
\]
\end{definition}

\noindent
In trivial situations ultra-antisymmetric elements are abundant. For instance, if $A$ is a Banach $\ast$-algebra with zero product ($ab=0$ for all $a,b\in A$), then every element of norm $1$ is ultra-antisymmetric. The interesting case is when $A$ has nontrivial multiplication. In unital Banach $\ast$-algebras the existence of such elements is more restrictive, and genuine examples require some care. Below we present an example of an element $a$ such that 
\[
b^*ac + c^*a^*b = 0 \qquad \forall\, b,c\in A,
\]
which in particular makes $a$ ultra-antisymmetric. Despite the name, an ultra-antisymmetric element need not be antisymmetric, in the sense that \(a^*=-a\). 

\begin{example}
Consider $a=\begin{psmallmatrix}0&1\\0&0\end{psmallmatrix}\in \Mat_2(\Z_p)$ and let 
$A\subseteq \Mat_2(\Z_p)$ be the unital $*$-subalgebra generated by $1$ and $a$. 
We endow $A$ with the \emph{trivial involution}, i.e.\ $x^*=x$ for all $x\in A$. 
Note that $a^2=0$, and $A=\{\alpha 1+\beta a:\alpha,\beta\in\Z_p\}$ is commutative.

Let $b=\alpha+\beta a$ and $c=\gamma+\delta a$ with $\|b\|,\|c\|\le1$. 
Using $a^2=0$ and the trivial involution we compute
\[
b^* a c + c^* a^* b = b a c + c a b 
= \alpha\gamma\,a + \gamma\alpha\,a 
= 2\alpha\gamma\,a.
\]
Since $\|a\|=1$, it follows that
\[
\|b^* a c + c^* a^* b\| = |2\alpha\gamma|_p.
\]

If $p$ is odd, then $|2|_p=1$, so for instance with $b=c=1$ we obtain 
$\|b^*ac+c^*a^*b\|=1$, which shows that $a$ is not ultra-antisymmetric. 
On the other hand, if $p=2$, then $|2|_2=2^{-1}<1$, so for all 
$\alpha,\gamma\in\Z_2$ with $|\alpha|_2,|\gamma|_2\le1$ we have 
$|2\alpha\gamma|_2\le|2|_2<1$. Hence in this case
\[
\|b^* a c + c^* a^* b\| < 1\qquad \forall\, b,c\in A,\ \|b\|,\|c\|\le1,
\]
which proves that $a$ is ultra-antisymmetric relative to $A$.
\end{example}

In summary, the simple $2\times2$ example above yields a unital Banach $*$-algebra $A$ containing an ultra-antisymmetric element precisely in the case $p=2$.

Next we provide a similar $4\times4$ example, valid for odd primes $p$ such that $-1$ has a square root in $\Z_p$.

\begin{example}\label{ex:not-quasi-C}
Let $p$ be a prime for which an element $i\in\Z_p$ with $i^2=-1$ exists, and consider
\[
a=\begin{pmatrix}
0  & -1 & 0  & -i\\[4pt]
1  &  0 & -i &  0\\[4pt]
0  &  i &  0 & -1\\[4pt]
i  &  0 &  1 &  0
\end{pmatrix}\in \Mat_4(\Z_p),
\]
endowed with the involution given by matrix transpose. Let $A\subseteq \Mat_4(\Z_p)$ be the unital $^*$-subalgebra generated by $a$ and $1$. Then simple computations show that $a^*=-a$ (i.e. $a$ is anti-symmetric) and $a^2=0$, so that
$$A=\{ \alpha 1 + \beta a : \alpha,\beta\in\Z_p\}.$$
Moreover, $a$ is ultra-antisymmetric relative to $A$. Indeed,
for $\alpha,\beta\in\Z_p$ denote $x=\alpha+\beta a\in A$. The involution on $A$ is given by
\[
x^* = \alpha 1 + \beta a^* = \alpha 1 - \beta a,
\]
because the scalar involution is trivial and $a^*=-a$. Let now $b=\alpha+\beta a$ and $c=\gamma+\delta a$ be arbitrary elements of $A$. Using $a^2=0$ and $a^*=-a$ we compute
\begin{align*}
b^* a c
&= (\alpha - \beta a)\,a\,(\gamma + \delta a)
= \alpha\gamma\,a + \alpha\delta\,a^2 - \beta a a\gamma - \beta a a\delta a
= \alpha\gamma\,a,
\\
c^* a^* b
&= (\gamma - \delta a)(-a)(\alpha + \beta a)
= -\gamma\alpha\,a - \gamma\beta\,a^2 + \delta a a\alpha + \delta a a\beta a
= -\gamma\alpha\,a.
\end{align*}
Summing these two expressions yields
\[
b^* a c + c^* a^* b = (\alpha\gamma-\gamma\alpha)\,a = 0.
\]
Therefore
\[
\|b^* a c + c^* a^* b\| = 0 < 1
\]
for every \(b,c\in A\), proving that \(a\) is ultra-antisymmetric relative to $A$.
\end{example}

\begin{remark}\label{rem:not-quasi-C}
Example~\ref{ex:not-quasi-C} also provides a simple counterexample to the quasi-$C^*$-property.  Indeed, for the algebra $A$ generated by the $4\times 4$-matrix $a$ considered above we have $a^*=-a$ and $a^2=0$, and the calculation in the example shows
\[
b^*ac+c^*a^*b=0\qquad\text{for all }b,c\in A.
\]
If $\phi$ is any quasi-state on $A$ (so $\phi$ is continuous, $\phi(x^*)=\phi(x)$ and $\|\phi\|=1$), then
\[
\sup_{\|b\|,\|c\|\le1}|\phi(b^*ac)| \;=\; 0,
\]
because every $b^*ac$ is a scalar multiple of $a$ and, moreover, $\phi(a)=\phi(a^*)=-\phi(a)$, whence $\phi(a)=0$. On the other hand $\|a\|=1$ (for the sup-norm coming from the matrix embedding), so the equality
\[
\sup_{\|b\|,\|c\|\le1}|\phi(b^*ac)|=\|a\|
\]
cannot hold. Therefore $A$ is not a quasi-$C^*$-algebra.
\end{remark}

Some algebras do have ultra-antisymmetric elements, but every algebra can be emerged in a bigger algebra that does not have them:

\begin{theorem}\label{theo:matrices-antisymmetric}
    Let $A$ be a unital Banach $\ast$-algebra. The matrix algebra $\Mat_2(A)$ has no ultra-antisymmetric elements relative to itself.
\end{theorem}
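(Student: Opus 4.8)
The plan is to show directly that no $M\in\Mat_2(A)$ of norm $1$ can satisfy the defining inequality of ultra-antisymmetry: for each such $M$ I will produce explicit test elements $B,C$ with $\|B\|,\|C\|\le 1$ and $\|B^*MC+C^*M^*B\|=1$. The whole argument rests on the ultrametric description of the norm on $\Mat_2(A)$, namely $\|M\|=\max_{i,j}\|m_{ij}\|$ (the operator norm under the identification $\Mat_2(\BB(\HH))=\BB(\HH\oplus\HH)$), together with the elementary rules for the matrix units $E_{ij}$ (the matrix with $1_A$ in slot $(i,j)$ and zeros elsewhere): each $E_{ij}$ has norm $1$, the involution acts by $E_{ab}^*=E_{ba}$, and multiplication is $E_{ab}E_{cd}=\delta_{bc}E_{ad}$.

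First I would record the bookkeeping for how matrix units sandwich $M$. Writing $E_{\alpha\beta}NE_{\gamma\delta}=N_{\beta\gamma}E_{\alpha\delta}$, for $B=E_{ab}$ and $C=E_{cd}$ one computes
\[
B^*MC=E_{ba}ME_{cd}=m_{ac}E_{bd},\qquad C^*M^*B=E_{dc}M^*E_{ab}=(m_{ac})^*E_{db},
\]
and hence
\[
B^*MC+C^*M^*B=m_{ac}E_{bd}+(m_{ac})^*E_{db}.
\]
The decisive observation is that the two summands occupy the transposed slots $(b,d)$ and $(d,b)$, which are distinct precisely when $b\ne d$. This is exactly the extra room gained by passing from $A$ itself (where $b=d$ is forced, the two terms collapse to $(m+m^*)E_{11}$, and genuine ultra-antisymmetric elements can exist) to $\Mat_2(A)$.

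Now, given $M$ with $\|M\|=1$, the max-norm formula provides an entry $m_{ij}$ with $\|m_{ij}\|=1$. I would then choose $B=E_{i1}$ and $C=E_{j2}$ (so $a=i$, $c=j$, $b=1$, $d=2$), both of norm $\le 1$. The computation above specializes to
\[
B^*MC+C^*M^*B=m_{ij}E_{12}+(m_{ij})^*E_{21}.
\]
Since the slots $(1,2)$ and $(2,1)$ are distinct there is no cancellation, so by the max-norm formula and the isometry of the involution the norm of this matrix equals $\max(\|m_{ij}\|,\|(m_{ij})^*\|)=1$. Thus $\|B^*MC+C^*M^*B\|=1\not<1$, so $M$ is not ultra-antisymmetric; as $M$ was an arbitrary norm-one element, $\Mat_2(A)$ has no ultra-antisymmetric elements.

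I expect the only point requiring care to be the justification of the max-norm description $\|M\|=\max_{i,j}\|m_{ij}\|$ on $\Mat_2(A)$ (equivalently, that each $E_{ij}$ has norm $1$ and that the matrix norm dominates every entry norm); everything after that is the transparent off-diagonal placement argument. If the norm on $\Mat_2(A)$ is taken by definition to be this maximum norm, the obstacle disappears and the proof becomes purely formal.
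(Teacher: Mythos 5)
Your proposal is correct and follows essentially the same route as the paper: both arguments use matrix units of norm $\le 1$ as test elements and exploit the fact that $b^*Mc$ and $c^*M^*b$ land in transposed (hence distinct) off-diagonal slots, so no cancellation can occur, with everything resting on the max-norm description of $\Mat_2(A)$. The only cosmetic difference is that the paper argues by contradiction, testing the pairs $(e_{ij},1)$ to force every entry of $\tau$ to have norm $<1$ (contradicting $\|\tau\|=1$), whereas you make a single adaptive choice of two matrix units targeting a norm-one entry.
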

\begin{proof}
    Let $i, j \in \set{1,2}$ and $e_{ij}$ be the elementary matrix in $\Mat_2(A)$. Suppose that
    \begin{align*}
        \tau = \begin{pmatrix}
            a & b \\
            c & d
        \end{pmatrix}
    \end{align*}
    is ultra-antisymmetric. Then \(
        \|e_{11}\tau + \tau^* e_{11}\| < 1\) 
    and
    \begin{align*}
        e_{11}\tau + \tau^* e_{11} = \begin{pmatrix}
            a + a^* & b \\
            b^* & 0
        \end{pmatrix},
    \end{align*}
    so that $\|b\| < 1$. We also have
    \begin{align*}
        \|e_{12}\tau + \tau^* e_{21}\| < 1
    \end{align*}
    and
    \begin{align*}
        e_{12}\tau + \tau^* e_{21} = \begin{pmatrix}
            b^*+c & d \\
            d^* & 0
        \end{pmatrix},
    \end{align*}
    so that $\|d\| < 1$. Similarly, one sees that \(\|a\|<1\) and \(\|c\|<1\), implying that \(\|\tau\|<1\), which is a contradiction.\qedhere
\end{proof}

\begin{theorem} 
Let \(A\) be a unital Banach \(\ast\)-algebra equipped with the \(p\)-adic norm. If \(A\) contains no ultra-antisymmetric elements, then \(A\) is a quasi-\(\Cest\)-algebra. 
\end{theorem}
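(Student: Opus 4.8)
The plan is to verify the quasi-$C^*$-property for each fixed $a\in A$ separately, and the whole argument reduces to a single distance computation in a quotient module. First I would normalise: if $a=0$ any quasi-state works, and otherwise, since the norm is discretely valued and $A$ is $p$-adically complete, Remark~\ref{rem:divisibility-and-norms} lets me write $a=p^{n}a'$ with $\|a'\|=1$; as both sides of the quasi-$C^*$ identity scale by $|p^{n}|$ it suffices to treat the case $\|a\|=1$. Since $a$ is not ultra-antisymmetric, I obtain $b_0,c_0\in A$ with $\|b_0\|,\|c_0\|\le1$ and $\|e+e^{*}\|=1$, where $e:=b_0^{*}ac_0$ and $e^{*}=c_0^{*}a^{*}b_0$ (the norm is automatically $\le1$, so ``not $<1$'' forces ``$=1$''). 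Passing to $\ovr A:=A/pA$, this says $\ovr e+\ovr e^{*}\neq0$ in the $\fP$-algebra with involution $\ovr A$.

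The main construction is a symmetric norm-one functional that does not vanish at $e$. Let
\[
N:=\ovr{\mathrm{span}}_{\zP}\{x-x^{*}:x\in A\},
\]
a closed $\zP$-submodule, and write $q\colon A\to A/N$ for the quotient map; a continuous functional is symmetric ($\phi(x^{*})=\phi(x)$) exactly when it vanishes on $N$, so symmetric functionals are precisely the functionals on $A/N$. The key step, which I expect to be the main obstacle, is the claim that $\mathrm{dist}(e,N)=1$, i.e.\ that $q(e)$ still has norm $1$ in $A/N$. I would prove it by reduction modulo $p$: since $pA=\{x:\|x\|<1\}$, one has $\mathrm{dist}(e,N)<1$ if and only if $\ovr e$ lies in the image of $N$ in $\ovr A$, which is the $\fP$-subspace $\mathrm{Im}(1-*)=\{\ovr y-\ovr y^{*}\}$. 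But every such element is annihilated by $1+*$, so $\mathrm{Im}(1-*)\subseteq\ker(1+*)$; since $\ovr e+\ovr e^{*}\neq0$ we have $\ovr e\notin\ker(1+*)$, hence $\ovr e\notin\mathrm{Im}(1-*)$ and therefore $\mathrm{dist}(e,N)=1$.

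With the distance computation in hand, I would define $\phi_0$ on $\zP\cdot q(e)\subseteq A/N$ by $\phi_0(q(e))=1$; because $\|q(e)\|_{A/N}=1$ this functional has norm one. Using the non-Archimedean Hahn--Banach theorem (Ingleton's theorem, valid since $\qP$ is spherically complete and the norm is discretely valued), I extend it to a norm-one functional on $A/N$ and pull it back along $q$ to a functional $\phi\colon A\to\qP$. By construction $\phi$ is $\zP$-linear, continuous, vanishes on $N$ (so $\phi(x^{*})=\phi(x)$), and satisfies $\|\phi\|=1$; since every element of $A$ has norm $\le1$ it follows that $\phi(A)\subseteq\zP$, so $\phi$ is a genuine quasi-state. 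Finally $|\phi(b_0^{*}ac_0)|=|\phi(e)|=1$, whence
\[
\sup_{\|b\|,\|c\|\le1}|\phi(b^{*}ac)|=1=\|a\|,
\]
the reverse inequality $\le1$ being automatic. As $a\in A$ was arbitrary, $A$ satisfies the quasi-$C^*$-property. I would note in closing that the potential factor-of-two obstruction at $p=2$ never arises: working in $A/N$ I only require $\phi(e)$ to be a unit, not $\phi(e+e^{*})=1$, so the argument is uniform in $p$.
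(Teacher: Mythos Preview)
Your proof is correct and takes a genuinely different route from the paper's. The paper exploits the explicit isomorphism $A\cong c_0(X,\zP)$ (Theorem~\ref{thm:everyone-is-c0}) to write down the quasi-state by hand: since $\|b_0^{*}ac_0+c_0^{*}a^{*}b_0\|=1$, some coordinate $x\in X$ satisfies $|(b_0^{*}ac_0)(x)+(c_0^{*}a^{*}b_0)(x)|=1$, and one simply sets $\phi(f)=f(x)+f^{*}(x)$. This is entirely elementary and constructive; no Hahn--Banach is needed.

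Your argument instead isolates the obstruction abstractly: the inclusion $\mathrm{Im}(1-*)\subseteq\ker(1+*)$ in $A/pA$ forces $\mathrm{dist}(e,N)=1$, and Ingleton then manufactures the symmetric functional. This is more conceptual and would transport to settings without a preferred basis, at the cost of invoking the non-Archimedean Hahn--Banach theorem. One small point to tighten: Ingleton is a statement about vector spaces over spherically complete fields, whereas $A/N$ is only a $\zP$-module (and may have $p$-torsion); the clean way to run your extension step is to pass to $B=A\otimes_{\zP}\qP$ and $M=\overline{\{z-z^{*}:z\in B\}}$, observe that any $m\in M$ satisfies $m^{*}=-m$ so that $\|e-m\|<1$ would force $\|e+e^{*}\|<1$, and then apply Ingleton on $B/M$ before restricting back to $A$. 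Your notation ``$\phi\colon A\to\qP$'' already anticipates this, so it is a cosmetic fix rather than a gap.
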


\begin{proof}
If \(A = 0\), the result is trivial. Suppose \(A \neq 0\). Since \(A\) has a \(p\)-adically complete norm, there is an isomorphism of Banach \(\zP\)-modules
\[
A \cong c_0(X,\zP).
\]

Pick \(a \in A\) with \(a \neq 0\), and write \(u = \lambda a\) for some \(\lambda \in \zP\) such that \(\|u\| = 1\). As \(A\) contains no ultra-antisymmetric elements, there exist \(b, c \in A\) with \(\|b\|, \|c\| \leq 1\) satisfying
\[
\|b^* u c + c^* u^* b\| = 1.
\]

Hence, there exists \(x \in X\) such that
\[
|(b^* u c)(x) + (c^* u^* b)(x)| = 1.
\]

Define
\[
\phi \colon A \to \zP, \quad f \mapsto f(x) + (f^*)(x).
\]
Thus,
\(|\phi(b^* u c)| = 1\), so that \(|\phi(b^* a c)| = \|a\|\), as desired.
\end{proof}

Combining the previous two theorems, we immediately get:

\begin{corollary}
    For every $p$-adically complete unital Banach $*$-algebra $A$, the matrix algebra $\Mat_2(A)$ is a quasi-C*-algebra. In particular, any unital \(p\)-adically complete, involutive Banach \(\zP\)-algebra is Morita equivalent to a quasi-\(C^*\)-algebra. 
\end{corollary}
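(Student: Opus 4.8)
The plan is to derive both assertions formally from Theorem~\ref{theo:matrices-antisymmetric} and from the theorem immediately preceding this corollary. For the first assertion I would set $B=\Mat_2(A)$ and proceed in two steps. First, Theorem~\ref{theo:matrices-antisymmetric} tells us directly that $B$ has no ultra-antisymmetric elements relative to itself. Second, I would check that $B$ meets the standing hypotheses of the preceding theorem, namely that it is a unital Banach $\ast$-algebra whose $p$-adic norm is $p$-adically complete and takes values in $|\zP|$; granting this, that theorem yields at once that $B=\Mat_2(A)$ is a quasi-$\Cest$-algebra.

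The verification that $\Mat_2(A)$ inherits the required structure is routine but worth spelling out. Unitality is witnessed by the identity matrix. Equipping $\Mat_2(A)$ with the entrywise maximum norm and the involution ``conjugate-transpose'' (entrywise $\ast$ composed with matrix transpose), one checks submultiplicativity and isometry of the involution using the ultrametric inequality. Since $A$ is $p$-adically complete with $\|A\|\subseteq|\zP|$, Theorem~\ref{thm:everyone-is-c0} gives $A\cong c_0(X,\zP)$, whence $\Mat_2(A)\cong A^{4}\cong c_0(X\sqcup X\sqcup X\sqcup X,\zP)$ as Banach $\zP$-modules; in particular $\Mat_2(A)$ is again $p$-adically complete with norm values in $|\zP|$. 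This is exactly the input the preceding theorem needs.

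For the second assertion I would invoke the standard Morita equivalence between a unital ring and its $2\times 2$ matrix algebra: the bimodule of column vectors $A^{2}$ is a $\Mat_2(A)$--$A$ imprimitivity bimodule implementing an equivalence of module categories. Composing this with the first assertion shows that $A$ is Morita equivalent to the quasi-$\Cest$-algebra $\Mat_2(A)$, as claimed.

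The main point requiring care is not the matrix computation but the choice of the notion of Morita equivalence appropriate to the $p$-adic Banach $\ast$-algebra setting. At the purely algebraic level the equivalence $A\sim\Mat_2(A)$ is classical, and this is all that the statement literally requires; if, however, one wishes a completed (quasi-Hilbert-module) version compatible with the norms, one would need to promote $A^{2}$ to a quasi-Hilbert $A$-module and check that the induced bimodule norms are compatible. I expect this to be the only genuinely delicate step, with the remainder of the argument following immediately from the two preceding theorems.
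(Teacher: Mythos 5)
Your proposal is correct and follows exactly the paper's intended argument: the paper derives this corollary by "combining the previous two theorems" (Theorem~\ref{theo:matrices-antisymmetric} giving the absence of ultra-antisymmetric elements in $\Mat_2(A)$, and the theorem that such algebras are quasi-$C^*$-algebras), together with the classical Morita equivalence $A\sim\Mat_2(A)$. Your explicit verification that $\Mat_2(A)$ inherits unitality, $p$-adic completeness, and norm values in $|\zP|$ (via Theorem~\ref{thm:everyone-is-c0}) merely spells out what the paper leaves implicit in the word "immediately."
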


The following related result shows that a big class of $p$-adic operator algebras does not contain ultra-antisymmetric elements, and hence are quasi-C*-algebras.

\begin{proposition}\label{prop:no-ultra-antisymm}
Let $\HH=\qP(X)$ and suppose $A\subseteq \BUN(\HH)$ is a unital $^*$-subalgebra that contains all finite-rank operators (equivalently, it contains the matrix units $E_{r,s}$ for $r,s\in X$). 
Then $A$ contains no ultra-antisymmetric element: there is no $a\in A$ with $\|a\|=1$ such that
\[
\|b^* a c + c^* a^* b\| < 1\qquad\text{for all }b,c\in A,\ \|b\|,\|c\|\le1.
\]
\end{proposition}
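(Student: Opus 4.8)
The plan is to mimic the matrix-unit argument from Theorem~\ref{theo:matrices-antisymmetric}, exploiting the fact that $A$ contains all the elementary operators $E_{r,s}$. Suppose for contradiction that $a\in A$ has $\|a\|=1$ and is ultra-antisymmetric. The key idea is that $\|a\|=1$ means some matrix entry $a_{r,s}=\braket{\delta_r}{a\,\delta_s}$ is a unit in $\zP$ (i.e.\ $|a_{r,s}|=1$), since the operator norm on $\BUN(\qP(X))$ is the supremum of the absolute values of the matrix entries, and all these entries lie in $\zP$. Fix such a pair $(r,s)$.

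\emph{First} I would feed suitable matrix units into the ultra-antisymmetry inequality to isolate the entry $a_{r,s}$. Taking $b=E_{r,r}$ and $c=E_{s,s}$ (both of norm $\le 1$), one computes that $E_{r,r}^*\,a\,E_{s,s}=E_{r,r}\,a\,E_{s,s}=a_{r,s}E_{r,s}$, since conjugating $a$ by these diagonal matrix units extracts exactly the $(r,s)$-entry. \emph{Second}, the adjoint term $c^*a^*b=E_{s,s}\,a^*\,E_{r,r}=\overline{a_{r,s}}\,E_{s,r}=a_{r,s}\,E_{s,r}$ (here the pairing is $\zP$-valued and symmetric, so the relevant entry of $a^*$ at position $(s,r)$ equals $a_{r,s}$). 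The crucial point is that the two resulting operators $a_{r,s}E_{r,s}$ and $a_{r,s}E_{s,r}$ are supported on \emph{different} matrix positions when $r\neq s$, so there is no cancellation: the norm of their sum is $\max\{|a_{r,s}|,|a_{r,s}|\}=|a_{r,s}|=1$. This directly contradicts the ultra-antisymmetry condition $\|b^*ac+c^*a^*b\|<1$.

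The one case requiring separate attention is $r=s$, when the two contributions land on the same diagonal position and could in principle cancel. In that case $b^*ac+c^*a^*b = a_{r,r}E_{r,r}+a_{r,r}E_{r,r}=2a_{r,r}E_{r,r}$, which has norm $|2|_p|a_{r,r}|$; for $p=2$ this can drop below $1$, so the argument as stated would fail. The remedy is to avoid relying on a diagonal entry: I would instead argue that if $\|a\|=1$ then $a$ must have a unit entry $a_{r,s}$ with $r\neq s$, \emph{or} reduce to the off-diagonal case by replacing $a$ with $E_{r',r}\,a$ (still in $A$, still of norm~$1$) for a suitable $r'\neq r$ to shift the supporting row, thereby producing an off-diagonal witness. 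This case analysis is the main obstacle; once it is handled, the contradiction is immediate and the proof is complete.
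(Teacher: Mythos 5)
Your core idea --- plugging matrix units into the ultra-antisymmetry inequality so that the two terms $b^*ac$ and $c^*a^*b$ are supported at distinct matrix positions and therefore cannot cancel --- is exactly the mechanism of the paper's proof, and your off-diagonal case ($r\neq s$) is correct as written. But the diagonal case is a genuine gap, not a formality, and neither of your proposed remedies closes it as stated. The first remedy is simply false: the identity operator (or any diagonal operator, e.g.\ $E_{r,r}$ itself) has norm $1$ and carries all of its unit entries on the diagonal, so you cannot argue that a unit off-diagonal entry must exist. The second remedy --- replacing $a$ by $a'=E_{r',r}\,a$ with $r'\neq r$ --- does succeed, but only after a verification you omit: ultra-antisymmetry is a hypothesis about the specific element $a$, so before applying your off-diagonal argument to $a'$ you must show that $a'$ again satisfies the defining inequality. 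This holds because for any $b,c\in A$ with $\|b\|,\|c\|\le 1$,
\[
b^*a'c+c^*(a')^*b=(E_{r,r'}\,b)^*\,a\,c+c^*\,a^*\,(E_{r,r'}\,b),
\]
and $\tilde b:=E_{r,r'}\,b$ lies in $A$ with $\|\tilde b\|\le 1$, so the hypothesis on $a$ applies to the right-hand side; moreover $a'_{r',r}=a_{r,r}$ is a unit at an off-diagonal position, so $\|a'\|=1$. With this step supplied (and noting that both cases require $|X|\ge 2$ so that $r'$ exists), your proof becomes complete.

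For comparison, the paper avoids the case split entirely by decoupling the positions where the two terms land from the position $(u,v)$ of the unit entry: it fixes arbitrary distinct indices $i\neq j$ and takes $b=E_{u,i}$, $c=E_{v,j}$, so that $b^*ac=a_{u,v}E_{i,j}$ while $c^*a^*b$ is supported at $(j,i)$. Hence the $(i,j)$-entry of the sum equals $a_{u,v}$ regardless of whether $u=v$, and $\|b^*ac+c^*a^*b\|\ge|a_{u,v}|_p=1$ in all cases. This one choice subsumes both of your cases --- indeed it is essentially what your reduction produces after the fix above --- and is the cleaner way to write the argument.
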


\begin{proof}
Let $a\in A$ with $\|a\|=1$. Write the matrix entries of $a$ in the canonical basis $(\delta_x)_{x\in X}$ as $a_{r,s}=\braket{\delta_r}{a(\delta_s)}$. Since $\|a\|=1$ there exist indices $u,v\in X$ with $|a_{u,v}|_p=1$.

If $X$ is a singleton the statement is trivial, so we may assume that $X$ has at least two elements. Choose distinct indices $i,j\in X$. Put
\[
b=E_{u,i},\qquad c=E_{v,j},
\]
the corresponding rank-one operators (they belong to $A$ by hypothesis and have norm~$\le1$). A direct matrix calculation shows that the $(i,j)$-entry of
\[
T:=b^* a c + c^* a^* b
\]
equals \(a_{u,v}\). Indeed, using $E_{r,s}^*=E_{s,r}$ one checks
\[
(b^* a c)_{i,j}=a_{u,v},\qquad (c^* a^* b)_{i,j}=0
\]
because of the choice \(i\neq j\). Hence
\[
\|T\| \ge |T_{i,j}|_p = |a_{u,v}|_p = 1,
\]
contradicting the ultra-antisymmetry requirement \(\|T\|<1\). Therefore no such \(a\) exists.
\end{proof}

To complete the picture and show that essentially every Banach $*$-algebra can be represented on a quasi-Hilbert space, we need the following technical result.

\begin{proposition}\label{prop:subalgebr} Let $B$ be a Banach $*$-algebra over $\zP$ such that its norm only assumes values in $|\zP|$. Then, there exists a \(p\)-adically complete Banach $\ast$-algebra $A$ and an isometric $*$-homomorphism from $B$ to $A$. Moreover, if $B$ has finite rank, then $A$ can also be choosen to have finite rank. 
\end{proposition}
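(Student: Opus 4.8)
The plan is to take $A$ to be the $p$-adic completion of $B$, namely $A:=\varprojlim_n B/p^nB$, and to show that it inherits from $B$ all the required structure. Since each $p^nB$ is a two-sided $\ast$-ideal, every quotient $B/p^nB$ is a normed $\ast$-algebra for the quotient norm, and the transition maps $B/p^{n+1}B\to B/p^nB$ are contractive $\ast$-homomorphisms; hence the product and involution pass to the inverse limit and make $A$ a $\ast$-algebra over $\zP$. I would equip $A$ with the norm $\|x\|_A:=\sup_n\|x_n\|_{B/p^nB}$, an increasing supremum bounded by $1$. The first block of (routine) verifications is that this is a submultiplicative ultrametric norm, that it takes values in $|\zP|$ — each quotient norm is an infimum of elements of the discrete set $|\zP|$ and therefore again lies in $|\zP|$, while an increasing bounded sequence in $|\zP|$ stabilises — and that $A$ is norm-complete, so that $A$ is a genuine Banach $\ast$-algebra over $\zP$.

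Next I would check that $A$ is $p$-adically complete and that the canonical map $\iota\colon B\to A$ is an isometric $\ast$-homomorphism. The $p$-adic completeness is built into the construction: the transition maps are surjective, so $A/p^kA\cong B/p^kB$ and the canonical map $A\to\varprojlim_k A/p^kA$ is an isomorphism. Injectivity of $\iota$ follows from $\bigcap_n p^nB=0$, which holds because $\|p^ny\|\le p^{-n}$ whenever $\|y\|\le 1$. For the isometry, take $b\in B$ with $\|b\|_B=p^{-k}$; for $n>k$ every $c\in p^nB$ satisfies $\|c\|\le p^{-n}<\|b\|_B$, so the ultrametric inequality forces the quotient norm of $b$ in $B/p^nB$ to equal $\|b\|_B$, whence $\|\iota(b)\|_A=\|b\|_B$. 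I expect this to be the main obstacle, since it is precisely where the metric completeness packaged in the word ``Banach'' must be reconciled with the purely algebraic $p$-adic completion: one must confirm that passing to $\varprojlim_n B/p^nB$ neither enlarges nor shrinks the norms of elements of $B$, and that the limiting norm stays discretely valued and complete.

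Finally, for the finite-rank statement I would use the identification $A/pA\cong B/pB$. Recall that $B$ has finite rank exactly when $B\cong c_0(X,\zP)$ for a finite set $X$, equivalently when $\dim_{\fP}B/pB<\infty$. Since completion leaves $B/pB$ unchanged, $A$ satisfies $\dim_{\fP}A/pA=\dim_{\fP}B/pB<\infty$; lifting an $\fP$-basis of $A/pA$ and invoking $p$-adic completeness exhibits $A$ as a free $\zP$-module of the same finite rank, i.e.\ $A\cong c_0(X,\zP)$ with the same finite $X$. Thus the construction preserves finite rank, which completes the plan.
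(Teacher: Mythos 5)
Your route is genuinely different from the paper's. The paper does not form the $p$-adic completion at all: it passes to $C=B[1/p]=B\otimes\Qp$ with the extended norm $\|p^kb\|=p^{-k}\|b\|$ and takes $A$ to be the closed unit ball of $C$. There, $p$-adic completeness is immediate, because inside the unit ball of $C$ the condition $\|x\|<1$ literally coincides with divisibility by $p$ (one may form $p^{-1}x$ in $C$ and it stays in the ball), so the $p$-adic filtration agrees with the norm filtration and norm-completeness does the rest; preservation of finite rank is likewise immediate, since $A$ is a bounded $\Zp$-submodule of the finite-dimensional $\Qp$-space $B\otimes\Qp$. Your construction $A=\varprojlim_n B/p^nB$ with the supremum of the quotient norms can be made to work, and most of your verifications (values in $|\Zp|$, completeness of the sup norm, the isometry of $\iota$ via the isosceles inequality, finite rank via $A/pA\cong B/pB$) are correct as sketched, under the same implicit norm axiom the paper uses, namely that elements of $p^nB$ have norm at most $p^{-n}$.

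The one step that is not adequately justified --- and it is exactly where the content sits --- is the assertion that ``the transition maps are surjective, so $A/p^kA\cong B/p^kB$ and $A\to\varprojlim_k A/p^kA$ is an isomorphism.'' Surjectivity of the transition maps only gives a surjection $A\twoheadrightarrow B/p^kB$; identifying its kernel with $p^kA$ is not formal. Indeed, ``the completion is complete'' is a genuine theorem which fails for completions along general (non-finitely-generated) ideals; for the principal ideal $(p)$ it is true (Stacks Project, Tag 05GG), and in your normed setting it can be proved directly: an element $x\in\ker\bigl(A\to B/p^kB\bigr)$ can be written as a convergent series $\sum_{j\ge k}p^jd_j$ with $\|d_j\|\le 1$, and factoring gives $x=p^k\sum_{j\ge k}p^{j-k}d_j$, where the second series converges in $A$, so $x\in p^kA$. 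You should either cite this fact or include that argument. One further remark on what each approach buys: the inequality your isometry step uses ($c\in p^nB\Rightarrow\|c\|\le p^{-n}$), combined with norm-completeness of $B$ and the same series-factoring trick, shows that $\iota\colon B\to A$ is in fact also surjective; so your $A$ is canonically $B$ itself (i.e.\ under these axioms $B$ was already $p$-adically complete), whereas the paper's $A$ is in general a strictly larger algebra --- for example $B=\Zp\oplus\Zp$ with norm $\max(|a|,p^{-1}|b|)$ has paper's $A=\Zp\oplus p^{-1}\Zp$. Both outputs satisfy the proposition, but this collapse is worth being aware of when you compare the two constructions.
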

\begin{proof}
Consider the $\ast$-algebra
\begin{align*}
    C = B[1/p] = \set{p^{k}b \mid k \in \zZ, b\in B} 
\end{align*}
Define the norm
\begin{align*}
    \|p^kb\| = p^{-k} \|b\|.
\end{align*}
This is just the $\qP$-Banach space $B \otimes \qP$. Then the algebra
\begin{align*}
    A =\{x \in C \mid \|x\| \leq 1\}
\end{align*} is a closed $\zP$-subalgebra of $C$, hence is Banach. If $x\in A$ has $\|x\| < 1$, then $x$ is divisible by $p$ in $A$, therefore $A$ is $p$-adically complete. The algebra $B$ can be naturally viewed as a subalgebra of $A$. It is clear that $A$ has finite rank if $B$ does.
\end{proof}

\begin{theorem}[GNS]\label{thm:GNS}
    Every Banach $\ast$-algebra $A$ with $\|A\| \subseteq \vert \zP \vert $ can be isometrically represented inside $\BB(\HH)$ for some quasi-Hilbert space $\HH$.
\end{theorem}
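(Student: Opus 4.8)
The plan is to assemble the final theorem by chaining together the machinery developed throughout the section, using the $2\times 2$-matrix trick as the key device for manufacturing a quasi-$C^*$-algebra out of an arbitrary input. First I would take an arbitrary Banach $\ast$-algebra $A$ with $\|A\|\subseteq|\zP|$. Since the theorem as stated does not assume $A$ is unital or $p$-adically complete, my first move is to reduce to that case: apply Proposition~\ref{prop:subalgebr} to embed $A$ isometrically into a $p$-adically complete Banach $\ast$-algebra, and if necessary adjoin a unit (noting that unitization of a $p$-adically complete Banach $\ast$-algebra with norm in $|\zP|$ stays in this class). This lets me assume without loss of generality that $A$ is unital and $p$-adically complete.

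Next I would pass to $\Mat_2(A)$. By Theorem~\ref{theo:matrices-antisymmetric}, the algebra $\Mat_2(A)$ contains no ultra-antisymmetric elements relative to itself, and $\Mat_2(A)$ is still unital, $p$-adically complete, with norm in $|\zP|$ (the matrix sup-norm over $A$ preserves these properties). Then the theorem stating that a unital Banach $\ast$-algebra with the $p$-adic norm and no ultra-antisymmetric elements is automatically a quasi-$C^*$-algebra applies, so $\Mat_2(A)$ is a quasi-$C^*$-algebra. Now the first part of the $p$-adic GNS theorem (Theorem ``p-adic GNS: First part'') furnishes a quasi-Hilbert space $\HH$ and an isometric $\ast$-representation $\Mat_2(A)\hookrightarrow\BB(\HH)$.

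It remains to recover $A$ itself inside $\BB(\HH)$. The standard embedding $A\hookrightarrow\Mat_2(A)$ sending $a\mapsto \mathrm{diag}(a,0)=a\,e_{11}$ (the top-left corner) is an isometric $\ast$-homomorphism, since the matrix sup-norm of $\mathrm{diag}(a,0)$ equals $\|a\|$. Composing this with the isometric representation of $\Mat_2(A)$ yields an isometric $\ast$-representation of $A$ on $\HH$, which is exactly the claim.

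The main obstacle I anticipate is the bookkeeping in the reduction step: one must confirm that each auxiliary construction — the $B[1/p]$ passage of Proposition~\ref{prop:subalgebr}, the unitization, and the move to $2\times 2$ matrices — genuinely preserves both $p$-adic completeness and the condition $\|\cdot\|\subseteq|\zP|$, and that each inclusion is honestly \emph{isometric} rather than merely contractive. In particular, the corner embedding $a\mapsto a\,e_{11}$ must be checked to be isometric for the chosen matrix norm, and one should verify that unitization does not enlarge the value-group of the norm beyond $|\zP|$. These are routine but essential verifications; none involves a genuinely hard estimate, so the proof is essentially a matter of correctly invoking the earlier results in sequence.
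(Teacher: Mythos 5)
Your proposal is correct and follows essentially the same route as the paper's own proof: reduce to the unital, $p$-adically complete case via Proposition~\ref{prop:subalgebr}, pass to $\Mat_2(A)$ to kill ultra-antisymmetric elements (Theorem~\ref{theo:matrices-antisymmetric}), conclude that $\Mat_2(A)$ is a quasi-$C^*$-algebra, and apply the first part of the $p$-adic GNS theorem, recovering $A$ through the isometric corner embedding. The paper states these steps more tersely (leaving the unitization and corner-embedding verifications implicit), so your added bookkeeping is a faithful elaboration rather than a departure.
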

\begin{proof}
    Let $A$ be a Banach $\ast$-algebra. We may assume without loss of generality that \(A\) is unital. If $A$ is not $p$-adically complete, we may use \ref{prop:subalgebr} to embed it inside a $p$-adically complete Banach $\ast$-algebra. Consequently, we may also assume without loss of generality that $A$ is $p$-adically complete. If A has ultra-antisymmetric elements, we may embed it inside \(\Mat_2(A)\) by Theorem \ref{theo:matrices-antisymmetric} to get a $\ast$-algebra containing an isometric copy of $A$ without ultra-antisymmetric elements. As $\Mat_2(A)$ is a quasi-$\Cest$-algebra and thus admits an isometric $\ast$-representation, the result follows.
\end{proof}

\subsection{Finite-rank quasi-Hilbert spaces}

The goal of this section is to prove that essentially any finite rank Banach \(*\)-algebra is a \(p\)-adic operator algebra. To set this up, we first have the following preliminary lemmas:

\begin{lemma}\label{lem:finite-rank}
    Every finite rank (as a $\zP$-module) and $p$-adically complete Banach $\ast$-algebra $A$ with $\|A\| \subseteq |\zP|$ can be isometrically represented on a finite-rank quasi-Hilbert space.
\end{lemma}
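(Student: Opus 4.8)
The plan is to reduce Lemma~\ref{lem:finite-rank} to the GNS machinery already established, exploiting the finiteness hypothesis at every step so that all the quasi-Hilbert spaces that occur remain finite rank. First I would observe that since $A$ is a finite-rank, $p$-adically complete Banach $\zP$-algebra with $\|A\|\subseteq|\zP|$, Theorem~\ref{thm:everyone-is-c0} gives an isometric identification $A\cong c_0(X,\zP)$ with $X$ \emph{finite}; in particular $A$ is automatically unital after the harmless unitization (which preserves finite rank), so I may assume $A$ is unital without leaving the finite-rank world. The goal is then to run the argument of Theorem~\ref{thm:GNS} while keeping track of ranks.

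The key steps, in order, are as follows. If $A$ contains an ultra-antisymmetric element, I pass to $\Mat_2(A)$, which by Theorem~\ref{theo:matrices-antisymmetric} has no ultra-antisymmetric elements relative to itself; crucially $\Mat_2(A)$ is again finite rank, so this replacement stays inside our class and produces an isometric copy of $A$. Hence I may assume $A$ has no ultra-antisymmetric elements, so that by the theorem preceding Remark~\ref{rem:not-quasi-C} (``if $A$ contains no ultra-antisymmetric elements, then $A$ is a quasi-$\Cest$-algebra'') the algebra $A$ is a unital quasi-$\Cest$-algebra. I then invoke the first part of the $p$-adic GNS theorem to obtain a quasi-state $\phi$ realizing the norm of each fixed element, and build the associated representation $\pi_\phi\colon A\to\BB(\HH_\phi)$ from Theorem~\ref{theo:correctedGNS001}. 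The essential finiteness input is that each $\HH_\phi=A/I_\phi$ is a quotient of the finite-rank module $A$, hence finite rank, and the representation $\pi_\phi$ is the left-regular action on this quotient.

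The main obstacle is \emph{isometry on a single finite-rank space}: the proof of the first part of the GNS theorem achieves isometry only after taking the direct sum $\bigoplus_\phi\HH_\phi$ over \emph{all} quasi-states, which is typically infinite rank. To stay finite rank I would not take that full sum. Instead, since $A\cong c_0(X,\zP)$ has finite rank, its closed unit sphere modulo $pA$ is a finite set, so there are only finitely many norms to realize; I select, for each of the finitely many basis elements (or more safely, a finite generating set of the finite $\fP$-vector space $A/pA$), one quasi-state $\phi_k$ witnessing its norm via the quasi-$\Cest$-property, and form the \emph{finite} direct sum $\HH=\bigoplus_{k}\HH_{\phi_k}$. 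This $\HH$ is a finite-rank quasi-Hilbert space by the direct-sum proposition, and the representation $\pi=\bigoplus_k\pi_{\phi_k}$ is contractive; the isometry $\|\pi(a)\|=\|a\|$ for every $a$ then follows because for any $a$ the witnessing computation in the GNS proof shows $\|\pi_{\phi_k}(a)\|=\|a\|$ for the appropriate $k$, and one checks this finite collection suffices to detect the norm of \emph{every} element and not merely the chosen generators.

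The delicate point I would verify carefully is precisely that last claim: a finite set of quasi-states chosen to realize the norms of a spanning set of $A/pA$ actually realizes the norm of every $a\in A$. Here I would use ultrametricity together with the discreteness of $\|A\|\subseteq|\zP|$: writing $a$ in terms of the finite basis, the norm $\|a\|$ equals the maximum of the relevant coefficient valuations, and the quasi-$\Cest$-property applied through $\pi_{\phi_k}$ recovers exactly this maximum, so no element escapes detection. If a direct counting argument proves awkward, the safe fallback is to note that $A/pA$ is a \emph{finite} $\fP$-algebra, hence has only finitely many elements of norm $1$ to test, and to take one quasi-state per such element; the resulting finite sum is still finite rank and manifestly isometric.
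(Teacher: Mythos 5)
Your proposal follows the same skeleton as the paper's proof (unitize, pass to $\Mat_2(A)$ to eliminate ultra-antisymmetric elements, then take a finite direct sum of GNS representations), but it diverges at the crucial step of producing the finite family of quasi-states, and your primary mechanism for that step has a gap. The paper does not select quasi-states abstractly at all: it uses the module identification $A\cong c_0(X,\zP)$ with $X=\{x_1,\dots,x_n\}$ finite to define the $n$ concrete coordinate quasi-states $\phi_k(f)=f(x_k)+f^*(x_k)$, i.e.\ it re-runs the proof that ``no ultra-antisymmetric elements $\Rightarrow$ quasi-$\Cest$-algebra'' and observes that the witnessing quasi-state for \emph{any} $a\neq 0$ is automatically one of these $n$ functionals, because $\|b^*uc+c^*u^*b\|=1$ in the sup-norm over a finite set forces some coordinate $x_k$ to have $|(b^*uc+c^*u^*b)(x_k)|=1$. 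Detection of every norm is thus immediate, with no per-element choices.

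Your first proposed selection --- one quasi-state per basis element of $A/pA$ --- is not justified as written: a quasi-state $\phi_k$ witnessing $\|e_k\|$ controls $\phi_k(b^*e_kc)$ but says nothing about the cross terms $\phi_k(b^*e_jc)$ for $j\neq k$, so for a combination $a=\sum_j\lambda_je_j$ the term you want to isolate can be cancelled by the others; the claim that ``the quasi-$\Cest$-property applied through $\pi_{\phi_k}$ recovers exactly this maximum'' is precisely what is missing. You flagged this yourself, and your fallback does repair it: since $\|A\|\subseteq|\zP|$ is discrete, a supremum equal to $1$ is attained, and then for $v\equiv u\pmod p$ with $\|u\|=\|v\|=1$ the ultrametric inequality gives $|\phi(b^*vc)|=|\phi(b^*uc)+\phi(b^*(v-u)c)|=1$ for the witnesses of $u$; hence one quasi-state per nonzero class of the finite ring $A/pA$, each witnessing a norm-one lift, detects the norm of every element after normalization. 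This yields a correct proof, though with $p^n-1$ summands where the paper needs only $n$, and it leans on a perturbation lemma the paper's coordinate-functional argument avoids entirely. Both versions rely, as you note, on each $\HH_\phi=A/I_\phi$ being finite rank (and in fact already complete, being a finitely generated torsion-free $\zP$-module), so the finite direct sum stays finite rank.
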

\begin{proof}
    If $A$ is finite rank and $p$-adically complete, then so its unitization, so we may suppose $A$ is unital. If $A$ is finite rank and $p$-adically complete, then $\Mat_2(A)$ also is, so we may suppose $A$ contains no ultra-antisymmetric elements. By Theorem \ref{thm:everyone-is-c0}, the underlying \(\zP\)-module of \(A\) is isometrically isomorphic to \(c_0(X,\zP)\). Furthermore, since \(A\) is finite rank, we may take \(X\) to be finite. Setting $X = \set{x_1, \dots, x_n}$, consider the quasi-states
    \begin{align*}
        \phi_k \colon A = c_0(X,\zP) &\to \zP,\\
        f &\mapsto f(x_k) + (f^{\ast})(x_k) ,
    \end{align*}
    for $k \in \set{1, \dots, n}$. Pick $a \in A$ with $a \neq 0$. Write $u = \lambda a$ with $\lambda \in \zP$ and $\|u\| = 1$. As $A$ does not have any ultra-antisymmetric elements, there are $b, c \in A$ with $\|b\|, \|c\| \leq 1$ and
    \begin{align*}
        \|b^*uc + c^*u^*b\| = 1.
    \end{align*}
    Hence there is $x_k \in X$ with
    \begin{align*}
        |(b^*uc)(x_k) + (c^*u^*b)(x_k)| &= 1 \\
        |\phi_k(b^*uc)| &= 1 \\
        |\phi_k(b^*ac)| &= \|a\|.
    \end{align*}
    Let $\HH_k$ be the quasi-Hilbert space associated with $\phi_k$, and let
    \begin{align*}
        \HH = \bigoplus_{1 \leq k \leq n} \HH_k.
    \end{align*}
    Notice that $\HH$ is finite-rank. Just as in the GNS construction, consider
    \begin{align*}
        \pi \colon A &\to \BB(\HH), \\
        a &\mapsto (\pi_{\phi}(a))_{\phi}.
    \end{align*}
    It remains to show that it is isometric. Fix $a \in A$; we already showed that for some $k \in \set{1, \dots, n}$ we have
    \begin{align*}
        \|a\| &= \sup_{\|b\|, \|c\| \leq 1} |\phi_k(b^{\ast}ac)| \\
        &= \sup_{\|b\|, \|c\| \leq 1} |\langle b + I_{\phi_k}, ac + I_{\phi_k} \rangle_{\phi_k}| \\
        &= \sup_{\|b\|, \|c\| \leq 1}|\langle b + I_{\phi_k}, \pi_{\phi_k}(a)(c) \rangle_{\phi_k}| \\
        &= \sup_{ \|c\| \leq 1} \|\pi_{\phi_k}(a)(c)\| \\
        &= \|\pi_{\phi_k}(a)\| \\
        &= \|\pi(a)\|
    \end{align*}
    whence we conclude the claim.
\end{proof}

\begin{lemma}\label{theo:finite-rank-q-Hilb}
    Every nonzero finite rank quasi-Hilbert space $\HH$ admits a linearly independent set of generators $\delta_1, \dots, \delta_n$ with $\langle \delta_i, \delta_j \rangle = 0$ if $i \neq j$ and $\langle \delta_i, \delta_i \rangle = a_i \in \zP$ with $|a_i| = 1$. 
\end{lemma}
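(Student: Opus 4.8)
The plan is to read the statement as a $p$-adic diagonalization of a unimodular symmetric bilinear form, carried out by a $p$-adic Gram--Schmidt process with induction on the rank. First I would invoke Theorem~\ref{thm:everyone-is-c0} to fix an isometric module identification $\HH\cong c_0(X,\zP)$ with $X=\{x_1,\dots,x_n\}$ finite, and let $e_1,\dots,e_n$ be the corresponding norm-one module basis with Gram matrix $G=(\langle e_i,e_j\rangle)\in\Mat_n(\zP)$. The first key step is to show that $G$ is invertible over $\zP$, equivalently that its reduction $\overline{G}$ is a nondegenerate symmetric form over $\Fp$: if $\overline{G}$ had a nonzero kernel vector, lifting it would produce $\xi\in\HH$ with $\|\xi\|=1$ but $\langle\eta,\xi\rangle\in p\zP$ for every $\eta$ with $\|\eta\|\le1$, contradicting the defining identity $\sup_{\|\eta\|\le1}|\langle\eta,\xi\rangle|=\|\xi\|$ of a quasi-Hilbert space.

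The inductive heart of the argument is to produce a single \emph{anisotropic unit vector} $\delta_1$, i.e.\ one with $a_1:=\langle\delta_1,\delta_1\rangle$ a unit ($|a_1|=1$). Over $\Fp$ with $p$ odd this is automatic: a nonzero nondegenerate symmetric form cannot vanish identically on the diagonal, since polarization (dividing by $2$) would then force the whole form to vanish. Thus there is $\overline{v}$ with $\langle\overline v,\overline v\rangle\neq0$ in $\Fp$; lifting to $\delta_1\in\HH$ with $\|\delta_1\|=1$ gives $a_1\in\zP^\times$ by Remark~\ref{rem:divisibility-and-norms}.

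Next I would split off $\delta_1$ orthogonally. Because $a_1$ is a unit, the map $w\mapsto w-a_1^{-1}\langle w,\delta_1\rangle\delta_1$ is a well-defined $\zP$-linear contraction onto $W:=\delta_1^\perp$, giving an orthogonal decomposition $\HH=\zP\delta_1\oplus W$ of free modules with $W$ of rank $n-1$. In the adapted basis the Gram matrix becomes block diagonal $\mathrm{diag}(a_1,G')$; since the change of basis lies in $\mathrm{GL}_n(\zP)$ we get $\det G'\in\zP^\times$, so $(W,\langle-,-\rangle|_W)$ is again a finite-rank quasi-Hilbert space (the norm axiom is inherited precisely because the projection above is contractive). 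Induction on $n$ then yields $\delta_2,\dots,\delta_n$ in $W$, and together with $\delta_1$ these form the desired orthogonal basis with unit self-pairings; the base case $n=1$ is exactly the norm axiom.

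The step I expect to be the genuine obstacle is producing the anisotropic unit vector when $p=2$: there the polarization argument collapses, and in fact the conclusion fails as stated. The hyperbolic plane $\langle(a,b),(c,d)\rangle=ad+bc$ on $\zP^{2}$ satisfies all quasi-Hilbert axioms (one checks it recovers the sup-norm) yet is an \emph{even} form, so it admits no orthogonal basis with unit diagonal entries. I would therefore expect to have to assume $p$ odd, where the argument above is complete, or else to weaken the conclusion to a normal form that permits hyperbolic $2\times2$ blocks in the case $p=2$.
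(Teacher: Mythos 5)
Your proof is correct for odd $p$, and it takes a genuinely different route from the paper's. The paper extends the pairing to the rationalisation $\HH\hat\otimes\qP$, invokes diagonalisability of nondegenerate symmetric forms over a field of characteristic $\neq2$ (Lang), and then ``normalizes'' the resulting $\qP$-basis, reading off $|a_i|=1$ from the norm axiom. You instead work integrally: unimodularity of the Gram matrix over $\zP$ (deduced from the norm axiom), existence of an anisotropic unit vector after reduction mod $p$ (polarization, using $p$ odd), orthogonal splitting via the contractive projection $w\mapsto w-a_1^{-1}\langle w,\delta_1\rangle\delta_1$, and induction on the rank. Your route buys a correctness that the paper's argument actually lacks: nothing in the paper's proof guarantees that the rescaled orthogonal $\qP$-basis \emph{generates} $\HH$ as a $\zP$-module, and the final computation $\|\delta_i\|=\sup_{\|f\|\le1}|f(i)a_i|=|a_i|$ silently uses exactly that. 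Concretely, in $\zP^2$ with the standard pairing and $p=5$, the vectors $\delta_1=(2,1)$, $\delta_2=(-1,2)$ are orthogonal and of norm one, but they span an index-$5$ sublattice and $\langle\delta_1,\delta_1\rangle=5$ is not a unit; the paper's construction gives no reason why such a basis is avoided. Your inductive splitting is precisely what rules this out.

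Your reservation about $p=2$ is also well-founded, and it is a defect of the lemma as stated rather than of your argument. The hyperbolic pairing $\langle(a,b),(c,d)\rangle=ad+bc$ on $\Z_2^2$ satisfies every axiom of a quasi-Hilbert space (it induces the sup norm), yet $\langle v,v\rangle=2ab\in2\Z_2$ for all $v$, so no vector has unit self-pairing; a determinant comparison (a diagonal Gram matrix would have determinant in $4\Z_2$, while any $\mathrm{GL}_2(\Z_2)$ change of basis preserves the determinant $-1$ up to a unit square) even excludes orthogonal lattice bases altogether. So the lemma requires the hypothesis $p\neq2$, or a weaker conclusion permitting hyperbolic $2\times2$ blocks. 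This is harmless for the section's main theorem, which already assumes $p\neq2$, but Remark~\ref{rmk:finite-dim} is stated for all $p$ and should be restricted accordingly.
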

\begin{proof}
    Without loss of generality write $\HH = c_0(\set{1, \dots, n}, \zP) = \zP^n$. The pairing on $\HH$ extends naturally to a symmetric nondegenerate bilinear form $b$ on the rationalisation $\HH \hat\otimes \qP$. Since any nondegenerate symmetric bilinear form on a finite-dimensional vector space over a field with characteristic different than $2$ is orthogonalisable \cite{Lang:Algebra}*{XV.Theorem~3.1}, normalizing the basis on $\HH \otimes \qP$ we get a linearly independent set of generators $G = \set{\delta_1, \dots, \delta_n}$ with 
    \begin{align*}
        \langle \delta_i, \delta_j\rangle &= 0 \text{, for }i \neq j, \\
        \langle \delta_i, \delta_i\rangle &= a_i \neq 0.
    \end{align*}
    To show that $|a_i| = 1$, we notice that
    \begin{align*}
        1 = \|\delta_i\| = \sup_{\|f\| \leq 1 } \langle \delta_i, f\rangle = \sup_{\|f\| \leq 1 } \langle \delta_i, \sum_{1 \leq j\leq n} f(j) \delta_j\rangle = \sup_{\|f\| \leq 1 } f(i)a_i = a_i.
    \end{align*}
\vskip -1,5pc
\end{proof}

We will refer to a set of generators as in Theorem \ref{theo:finite-rank-q-Hilb} as an \emph{orthogonal basis}.

\begin{remark}\label{rmk:finite-dim}
    With this result, we see that for a finite-rank quasi-Hilbert space $\HH$ its algebra of bounded operators is isomorphic to $\BB(\HH) = \Mat_n(\zP)$ with the following adjoint operation
    \begin{align*}
        (b_{ij})^{\ast} = (b_{ji}a_ja_i^{-1})
    \end{align*}
    where $a_i$ are the elements in the diagonal of the diagonalization of the pairing associated to $\HH$.
\end{remark}

\begin{lemma}\label{lem:quadratic-extension}
    Suppose that $p \neq 2$. Let $u \in \zP$ be a $p$-adic integer such that the equation $x^2 = u$ does not have a solution on $\zP$. Then, the $\zP$-algebra $\zP[\sqrt{u}]$ with trivial involution and norm given by $\|a + b\sqrt{u}\| = \max\set{|a|_p, |b|_p}$ is a $p$-adic operator algebra.
\end{lemma}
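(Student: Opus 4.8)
The plan is to exhibit an explicit isometric \(*\)-representation of \(\zP[\sqrt u]\) on a \emph{canonical} quasi-Hilbert space \(\qP(X)\), thereby realizing it as a \(*\)-subalgebra of \(\BUN(\qP(X))\). First I would record that \(\zP[\sqrt u]\) satisfies the hypotheses needed to run the finite-rank machinery: it is a free \(\zP\)-module of rank \(2\) (basis \(1,\sqrt u\)), hence finite rank and \(p\)-adically complete; its norm \(\|a+b\sqrt u\|=\max\{|a|_p,|b|_p\}\) takes values in \(|\zP|\) and is submultiplicative because \(|u|_p\le 1\); and the involution is trivial, so every element is self-adjoint. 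In particular \(\sqrt u\) is self-adjoint with \((\sqrt u)^2=u\cdot 1\), and the whole problem reduces to producing a self-adjoint operator \(M\) with \(M^2=u\,\mathrm{Id}\) for which \(a+b\sqrt u\mapsto a\,\mathrm{Id}+bM\) is isometric.

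The key construction is as follows. Since \(p\neq 2\) there are at most two residues \(\bar t\in\Fp\) with \(\bar t^2=\bar u\), so I can choose \(t\in\zP\) with \(s:=u-t^2\in\zP^\times\). On the rank-two \emph{twisted} quasi-Hilbert space \(\HH_0=(\zP^2,\langle\cdot,\cdot\rangle)\) with orthogonal basis \(\delta_1,\delta_2\) and \(\langle\delta_1,\delta_1\rangle=s\), \(\langle\delta_2,\delta_2\rangle=1\) (a genuine quasi-Hilbert space since \(|s|_p=1\); cf.\ Lemma~\ref{theo:finite-rank-q-Hilb} and Remark~\ref{rmk:finite-dim}), set \(M=\begin{psmallmatrix} t & 1\\ s & -t\end{psmallmatrix}\). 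Using the twisted adjoint formula of Remark~\ref{rmk:finite-dim} one checks \(M^*=M\), and a direct computation gives \(M^2=(t^2+s)\,\mathrm{Id}=u\,\mathrm{Id}\). The map \(\pi(a+b\sqrt u)=a\,\mathrm{Id}+bM\) is then a \(*\)-homomorphism, and it is isometric: the entries of \(\pi(a+b\sqrt u)\) are \(a\pm bt\), \(b\) and \(bs\), so its operator norm is \(\max\{|a+bt|_p,|a-bt|_p,|b|_p\}\) (using \(|s|_p=1\)); since \(|2|_p=1\), the ultrametric identity \(|2a|_p=|a|_p\le\max\{|a+bt|_p,|a-bt|_p\}\) forces this to equal \(\max\{|a|_p,|b|_p\}=\|a+b\sqrt u\|\).

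It remains to pass from the twisted pairing to the canonical one, so that the target is literally \(\BUN(\qP(X))\). Here I would use that for \(p\) odd every unit of \(\zP\) is a sum of two squares (reduce mod \(p\), where every element is a sum of two squares; arrange one summand to be a nonzero square, and lift by Hensel). Writing \(s=c^2+d^2\) with \(c,d\in\zP\) (so \((c,d)\) is a primitive, and automatically unit-norm, vector), the assignment \(\delta_1\mapsto(c,d,0)\), \(\delta_2\mapsto(0,0,1)\) embeds \(\HH_0\) isometrically and \emph{orthogonally} into the canonical space \(\qP(\{1,2,3\})\), with orthogonal complement spanned by \((d,-c,0)\). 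Extending \(M\) by zero on this complement yields \(\tilde M\in\BUN(\qP(\{1,2,3\}))\) which is self-adjoint for the canonical pairing (because the embedding is a pairing isometry, so it intertwines the two adjoints) and satisfies \(\tilde M^2=u\,P\), where \(P=\pi(1)\) is the projection onto the image. The resulting \(\pi\) is an isometric \(*\)-homomorphism onto a \(*\)-subalgebra of \(\BUN(\qP(\{1,2,3\}))\), which is exactly the assertion.

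The step to anticipate as the main obstacle is precisely this last one: one cannot in general represent \(\sqrt u\) by a symmetric matrix with respect to the canonical pairing, since that would require \(u=\alpha^2+\beta^2\) with \(\beta\in\zP^\times\), and \(u\) need not be a sum of two squares (for instance \(u=p\) with \(p\equiv 3\bmod 4\)). The twist by the unit \(s=u-t^2\), together with the sum-of-two-squares unfolding of \(s\) (which, being a unit, is always representable), is exactly what circumvents this. Alternatively, one may simply invoke Lemma~\ref{lem:finite-rank} to obtain a representation on some finite-rank quasi-Hilbert space and then apply the same orthogonal-unfolding argument to convert it into a representation on \(\qP(X)\).
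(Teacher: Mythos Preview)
Your argument is correct. The paper's proof is considerably shorter: it invokes directly that every \(p\)-adic unit is a sum of two squares, writes \(u=x^2+y^2\), and represents \(\sqrt u\) by the symmetric matrix \(\begin{psmallmatrix} x & y\\ y & -x\end{psmallmatrix}\in\Mat_2(\zP)\) with the canonical (transpose) involution, checking isometry by hand. So the paper lands immediately on the canonical rank-\(2\) space, whereas you pass through a twisted rank-\(2\) space and then unfold into canonical \(\zP^3\) via the sum-of-two-squares decomposition of the auxiliary unit \(s=u-t^2\).

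What your detour buys is genuine extra generality: the paper's argument tacitly uses \(|u|_p=1\) (otherwise \(u\) need not be a sum of two squares, as you yourself observe for \(u=p\) with \(p\equiv 3\bmod 4\)), while the lemma as stated allows any non-square \(u\in\zP\). Your choice of \(t\) with \(s=u-t^2\in\zP^\times\) works uniformly, so your proof actually covers the full statement. (In the paper the lemma is only invoked for \(u\in\zP^\times\), so this gap is harmless there.)

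One caution: your closing ``alternatively'' is circular. Lemma~\ref{lem:finite-rank} produces a representation on a finite-rank quasi-Hilbert space \(\HH\), but converting that into a representation on a canonical \(\qP(X)\) requires knowing that \(\BB(\HH)\) is a \(p\)-adic operator algebra, and the proof of that (via Lemmas~\ref{lem:weird-matrix}, \ref{lem:weirder-matrix}) rests on the present lemma. Your main argument does not use this shortcut, so this does not affect correctness.
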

\begin{proof}
    It is a well-known fact from the theory of $p$-adic numbers that every $p$-adic integer with $p$-adic norm $1$ can be written as a sum of $2$ squares. Choose $x, y \in \zP$ such that $u = x^2 + y^2$
    
    Consider the representation
    \begin{align*}
        \phi \colon \zP[\sqrt{u}] &\to \Mat_2(\zP) \\
        a + b\sqrt{u} &\mapsto a \begin{pmatrix}
            1 & 0 \\
            0 & 1
        \end{pmatrix} + b\begin{pmatrix}
            x & y \\
            y & -x
        \end{pmatrix}
    \end{align*}
    It is easy to check that this is a isometric representation.
\end{proof}

\begin{lemma}\label{lem:weird-matrix}
    Let $p \neq 2$ be a prime number. Let $u \in \zP^{\times}$ a $p$-adic integer that is not a square. Consider the Banach $\zP$-algebra $A = \Mat_2(\zP)$ with involution given by
    \begin{align*}
        \begin{pmatrix}
        a & b\\
        c & d
    \end{pmatrix}^{\ast} = \begin{pmatrix}
        a & cu^{-1} \\
        bu & d
    \end{pmatrix}.
    \end{align*}
    The algebra $A$ is a $p$-adic operator algebra.
\end{lemma}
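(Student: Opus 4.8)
The plan is to exhibit an explicit isometric $\ast$-representation of $A$ on a \emph{canonical} quasi-Hilbert space, i.e.\ an isometric $\ast$-homomorphism $\rho\colon A\to(\Mat_N(\zP),\ \text{transpose})$ for a suitable $N$. Since the transpose is exactly the adjoint for the canonical pairing on $c_0(\{1,\dots,N\},\zP)\cong\qP(\{1,\dots,N\})$, such a $\rho$ identifies $A$ with a $\ast$-subalgebra of $\BUN(\qP(\{1,\dots,N\}))$ and thereby proves the claim. First I would rewrite the given involution in the closed form $M^\ast=G^{-1}M^{\mathsf T}G$ with $G=\mathrm{diag}(u,1)$; by Remark~\ref{rmk:finite-dim} this means $A\cong\BB(\HH)$ for the rank-two quasi-Hilbert space $\HH=\zP^2$ with pairing $\langle\xi,\eta\rangle=u\xi_1\eta_1+\xi_2\eta_2$. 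The key point is that one \emph{cannot} symmetrise this form inside $\Mat_2(\zP)$: conjugating by $\mathrm{diag}(t_1,t_2)$ converts $\ast$ into the transpose only if $t_1^2/t_2^2=u$, which is impossible over $\zP$ precisely because $u$ is a non-square. This is the conceptual obstacle, and it forces one to enlarge the matrix size.

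To get around it I would adjoin a matricial square root of $u$. Since $\lvert u\rvert_p=1$, write $u=x^2+y^2$ with $x,y\in\zP$ (as in Lemma~\ref{lem:quadratic-extension}) and set $J=\begin{psmallmatrix}x&y\\y&-x\end{psmallmatrix}\in\Mat_2(\zP)$, which satisfies $J^{\mathsf T}=J$, $J^2=uI$, and hence $J^{-1}=u^{-1}J$. I then define $\rho\colon A\to\Mat_4(\zP)$, viewed as $2\times2$ blocks of $2\times2$ matrices, by
\[
\rho\begin{pmatrix}a&b\\c&d\end{pmatrix}=\begin{pmatrix}aI&bJ\\ cJ^{-1}&dI\end{pmatrix}.
\]
Conceptually $\rho$ is the composite of conjugation by $\mathrm{diag}(\sqrt u,1)$, which lands in $\Mat_2(\zP[\sqrt u])$ and turns $\ast$ into the transpose over $\zP[\sqrt u]$ now that $u$ is a square there, with the entrywise application of the representation $\phi$ of Lemma~\ref{lem:quadratic-extension}. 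This viewpoint makes multiplicativity, unitality and injectivity immediate, but the displayed formula can also be used directly.

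It then remains to verify two things. The $\ast$-to-transpose compatibility is a one-line block computation: transposing the displayed matrix and using $J^{\mathsf T}=J$ gives $\begin{psmallmatrix}aI&cJ^{-1}\\ bJ&dI\end{psmallmatrix}$, while $\rho(M^\ast)$ has off-diagonal blocks $cu^{-1}J=cJ^{-1}$ and $buJ^{-1}=bJ$, so $\rho(M^\ast)=\rho(M)^{\mathsf T}$. The isometry statement is where the arithmetic of $u$ re-enters: for the ultrametric operator norm the norm of a matrix over $\zP$ equals the maximum of the absolute values of its entries, so I must show $\lVert bJ\rVert=\lvert b\rvert_p$ and $\lVert cJ^{-1}\rVert=\lvert c\rvert_p$; both reduce to $\max(\lvert x\rvert_p,\lvert y\rvert_p)=1$, which holds because $\lvert x^2+y^2\rvert_p=\lvert u\rvert_p=1$ forces at least one of $x,y$ to be a unit. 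Hence $\lVert\rho(M)\rVert=\max(\lvert a\rvert_p,\lvert b\rvert_p,\lvert c\rvert_p,\lvert d\rvert_p)=\lVert M\rVert$. I expect the main obstacle to be conceptual rather than computational: recognising that the non-squareness of $u$ genuinely obstructs a rank-two canonical model and that doubling the dimension via $\zP[\sqrt u]$ (equivalently via $J$) is the correct remedy; once $\rho$ is written down, the verifications are short. Identifying $(\Mat_4(\zP),\text{transpose})$ with $\BUN(\qP(\{1,2,3,4\}))$ through the isomorphism $\BUN(\qP(X))\cong\BB(c_0(X,\zP))$ of \cite{BGM:padicI} then finishes the proof.
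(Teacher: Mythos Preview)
Your proof is correct and follows essentially the same idea as the paper. The paper's version is phrased more abstractly: it embeds $A$ into $\Mat_2(\zP[\sqrt u])=\Mat_2(\zP)\otimes\zP[\sqrt u]$ via $\begin{psmallmatrix}a&b\\c&d\end{psmallmatrix}\mapsto\begin{psmallmatrix}a&b\sqrt u\\c\sqrt{u^{-1}}&d\end{psmallmatrix}$, where the target carries the transpose involution and is a $p$-adic operator algebra as a tensor product of two such. Your map $\rho$ is exactly this composed with the entrywise embedding $\zP[\sqrt u]\hookrightarrow\Mat_2(\zP)$ from Lemma~\ref{lem:quadratic-extension}, as you yourself note; your additional explicit verification that $\max(|x|_p,|y|_p)=1$ makes the isometry transparent, whereas the paper argues isometry by $p$-divisibility.
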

\begin{proof}
    By Lemma \ref{lem:quadratic-extension} we know that $\zP[\sqrt{u}]$ with trivial involution is a $p$-adic operator algebra. Consider the tensor product
    \begin{align*}
      \Mat_2(\zP[\sqrt{u}]) = \Mat_2(\zP) \otimes \zP[\sqrt{u}],
    \end{align*}
    with $\Mat_2(\zP)$ carrying the usual $p$-adic operator algebra structure. We know that $\Mat_2(\zP[\sqrt{u}])$ is a $p$-adic operator algebra. To represent $A$ on $\Mat_2(\zP[\sqrt{u}])$, consider
    \begin{align*}
        \phi \colon A &\to \Mat_2(\zP[\sqrt{u}]) \\
        \begin{pmatrix}
            a & b \\
            c & d
        \end{pmatrix} &\mapsto \begin{pmatrix}
            a & b\sqrt{u} \\
            c\sqrt{u^{-1}} & d
        \end{pmatrix}.
    \end{align*}
    One quickly checks that $\phi$ is a $\ast$-homomorphism and it is easy to check that it is isometric as every algebra involved is $p$-adically complete and a matrix on the domain is divisible by $p$ if, and only if is divisible by $p$ on the right.
\end{proof}

\begin{lemma}\label{lem:weirder-matrix}
    Let $p \neq 2$ be a prime number. Let $u \in \zP^{\times}$ a $p$-adic integer that is not a square. Let $\HH$ be a quasi-Hilbert space generated by $w_1, w_2, \dots, w_{2n}$ with 
    \begin{enumerate}
        \item $\langle w_k, w_l \rangle = 0$ for $k \neq l$.
        \item $\langle w_k, w_k \rangle = 1$ for $1 \leq k \leq n$ and $\langle w_l, w_l \rangle = u$ for $n < l \leq 2n$.
    \end{enumerate}
    Then the Banach $\ast$-algebra $\BB(\HH)$ is a $p$-adic operator algebra.
\end{lemma}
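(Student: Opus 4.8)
The plan is to realise $\BB(\HH)$ concretely as $\Mat_{2n}(\zP)$ carrying a \emph{twisted} transpose involution, and then to ``untwist'' it by conjugating with a diagonal matrix over the quadratic extension $\zP[\sqrt{u}]$, exactly in the spirit of Lemma~\ref{lem:weird-matrix} but now in block form.

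First I would apply Remark~\ref{rmk:finite-dim} to the prescribed orthogonal basis $w_1,\dots,w_{2n}$ (an orthogonal basis in the sense of Theorem~\ref{theo:finite-rank-q-Hilb}), whose diagonal pairing values are $a_i=1$ for $i\le n$ and $a_i=u$ for $n<i\le 2n$. This identifies $\BB(\HH)$ with $\Mat_{2n}(\zP)$ equipped with the involution $(M^{\ast})_{ij}=M_{ji}\,a_j a_i^{-1}$. Writing a matrix in $n\times n$ blocks $M=\left(\begin{smallmatrix}P&Q\\R&S\end{smallmatrix}\right)$, this involution becomes $\left(\begin{smallmatrix}P&Q\\R&S\end{smallmatrix}\right)^{\ast}=\left(\begin{smallmatrix}P^t&uR^t\\u^{-1}Q^t&S^t\end{smallmatrix}\right)$, which is precisely the block analogue of the $2\times2$ twisted involution treated in Lemma~\ref{lem:weird-matrix}.

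Next, by Lemma~\ref{lem:quadratic-extension} the ring $\zP[\sqrt{u}]$ (with trivial involution) is a $p$-adic operator algebra, and hence so is $\Mat_{2n}(\zP[\sqrt{u}])\cong\Mat_{2n}(\zP)\otimes\zP[\sqrt{u}]$, whose involution is the plain transpose over $\zP[\sqrt{u}]$. I would then set $D=\mathrm{diag}(1,\dots,1,\sqrt{u},\dots,\sqrt{u})\in\Mat_{2n}(\zP[\sqrt{u}])$ (with $n$ entries of each kind; note $\sqrt{u}$ and $\sqrt{u^{-1}}=u^{-1}\sqrt{u}$ are units of $\zP[\sqrt{u}]$, so $D$ is invertible there) and define $\phi(M)=DMD^{-1}$, that is $\phi\left(\begin{smallmatrix}P&Q\\R&S\end{smallmatrix}\right)=\left(\begin{smallmatrix}P&\sqrt{u^{-1}}\,Q\\\sqrt{u}\,R&S\end{smallmatrix}\right)$. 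Being a conjugation, $\phi$ is a unital injective algebra homomorphism into $\Mat_{2n}(\zP[\sqrt{u}])$. The one substantive identity is that the twisted involution on the source is conjugate to the transpose: since $(D^{\pm2})_{ii}=a_i^{\pm1}$ one computes $M^{\ast}=D^{-2}M^tD^{2}$, whence $\phi(M^{\ast})=DM^{\ast}D^{-1}=D^{-1}M^tD=\phi(M)^t$. Thus $\phi$ is a $\ast$-homomorphism onto its image.

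Finally I would verify that $\phi$ is isometric, so that $\BB(\HH)$ embeds as a (closed, finite-rank) $\ast$-subalgebra of the $p$-adic operator algebra $\Mat_{2n}(\zP[\sqrt{u}])$ and is therefore itself a $p$-adic operator algebra. As $u\in\zP^{\times}$, we have $\|\sqrt{u}\|=\|\sqrt{u^{-1}}\|=1$; since all norms here take values in $|\zP|$ and are computed entrywise, multiplying the off-diagonal blocks by these units alters no entry's $p$-adic valuation, so $\|\phi(M)\|=\|M\|$ (equivalently, $M$ is divisible by $p^k$ if and only if $\phi(M)$ is). The only point requiring genuine care is the bookkeeping of the twisted involution — getting the block formula for $\ast$ correct and matching it against conjugation by $D$; I expect this (mild) matching to be the main obstacle, all remaining steps being formal or routine.
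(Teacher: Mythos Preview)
Your proof is correct. Both your argument and the paper's end up embedding $\BB(\HH)$ isometrically as a $\ast$-subalgebra of $\Mat_{2n}(\zP[\sqrt{u}])$ with the plain transpose, but the routes differ slightly. The paper first identifies $\BB(\HH)$ (with its twisted transpose) with the tensor product $\Mat_2(\zP)^w\otimes\Mat_n(\zP)$, where $\Mat_2(\zP)^w$ carries the $2\times2$ twisted involution of Lemma~\ref{lem:weird-matrix}, and then invokes that lemma together with closure of $p$-adic operator algebras under tensor products. You instead bypass Lemma~\ref{lem:weird-matrix} entirely by performing the conjugation by $D=\mathrm{diag}(1,\dots,1,\sqrt{u},\dots,\sqrt{u})$ directly in size $2n$, effectively inlining the proof of that lemma in block form. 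Your approach is marginally more self-contained and makes the untwisting $M^\ast=D^{-2}M^tD^2\Rightarrow\phi(M^\ast)=\phi(M)^t$ transparent; the paper's tensor factorisation is more modular and reusable. (Incidentally, your block formula for the involution and the paper's differ by swapping $u\leftrightarrow u^{-1}$ in the off-diagonal blocks; your derivation from Remark~\ref{rmk:finite-dim} is the one consistent with $(b_{ij})^\ast=(b_{ji}a_ja_i^{-1})$, and in any case both conventions yield an isomorphic $\ast$-algebra.)
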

\begin{proof}
    Following \ref{rmk:finite-dim}, it follows that $\BB(\HH)$ is isomorphic to $\Mat_{2n}(\zP)$ (as a Banach algebra) with involution given by
    \begin{align}\label{eq:involution-weird}
        \begin{pmatrix}
            A_{00} & A_{01} \\
            A_{10} & A_{11}
        \end{pmatrix}^{\ast} = \begin{pmatrix}
            A_{00}^t & u^{-1}A_{10}^t \\
            uA_{01}^t & A^t_{11}
        \end{pmatrix}
    \end{align}
    dividing in blocks with size $n \times n$. 

    Let $\Mat_2(\zP)^w$ be the $p$-adic operator algebra of Lemma \ref{lem:weird-matrix}. Consider the map
    \begin{align*}
        \phi \colon \BB(\HH) &\to \Mat_2(\zP)^w \otimes \Mat_n(\zP), \\
        \begin{pmatrix}
            A & B \\
            C & D
        \end{pmatrix} &\mapsto e_{11} \otimes A + e_{12} \otimes B + e_{21} \otimes C + e_{22} \otimes D
    \end{align*}
    The map $\phi$ is a homomorphism of $\zP$-algebras because of classical matrix theory, it preserves involution thanks to \eqref{eq:involution-weird}, it is isometric because every algebra involved is $p$-adically complete and a matrix is divisible by $p$ on the domain if, and only if is divisible by $p$ on the codomain. It is surjective because both algebras have rank $(2n)^2$ as $\zP$-modules and $\phi$ is surjective.
\end{proof}

\begin{theorem}
    Let $p \neq 2$ be a prime number. Let $\HH$ be a finite-rank quasi-Hilbert space, then $\BB(\HH)$ is a $p$-adic operator algebra.
\end{theorem}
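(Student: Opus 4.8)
The plan is to reduce the general finite-rank quasi-Hilbert space to the balanced situation already resolved in Lemma~\ref{lem:weirder-matrix}. First I would apply Lemma~\ref{theo:finite-rank-q-Hilb} to fix an orthogonal basis $\delta_1,\dots,\delta_n$ of $\HH$ with $\langle\delta_i,\delta_j\rangle=0$ for $i\neq j$ and $\langle\delta_i,\delta_i\rangle=a_i$, where each $a_i\in\zP^\times$. The decisive number-theoretic input is that, for $p\neq2$, the group of square classes $\zP^\times/(\zP^\times)^2$ has order two: by Hensel's lemma a unit is a square in $\zP$ precisely when its reduction is a square in $\Fp^\times$, and the squares form an index-two subgroup of the cyclic group $\Fp^\times$. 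This is exactly where the hypothesis $p\neq2$ enters. Fixing once and for all a non-square unit $u$, every $a_i$ is therefore of the form $c_i^2$ or $u\,c_i^2$ for some $c_i\in\zP^\times$.

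Next I would rescale each basis vector by $\delta_i\mapsto c_i^{-1}\delta_i$. Since $|c_i|_p=1$, this preserves the unit norm and does not disturb orthogonality, while the new self-pairing becomes either $1$ or $u$. After reordering I may thus assume $\HH$ carries an orthogonal basis with $m$ vectors of self-pairing $1$ and $k$ vectors of self-pairing $u$. Recall that in such a basis the module norm is the sup-norm: by Remark~\ref{rem:divisibility-and-norms} an element lies in $p\HH$ (equivalently has norm $<1$) iff all its coordinates do, so the orthogonal basis is genuinely ``orthonormal'' for $\|\cdot\|$. The cases $\HH=0$ and $m=0$ or $k=0$ cause no trouble and are swept into what follows.

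I would then embed $\HH$ orthogonally into the balanced quasi-Hilbert space $\HH'$ carrying $N$ vectors of self-pairing $1$ and $N$ of self-pairing $u$, where $N=\max(m,k)$; concretely $\HH$ is the sub-quasi-Hilbert space spanned by $m$ of the first block and $k$ of the second, and $\HH'=\HH\oplus\HH^\perp$ with $\HH^\perp$ spanned by the remaining basis vectors. Extending operators by zero on $\HH^\perp$ yields a map $\BB(\HH)\to\BB(\HH')$, $T\mapsto\tilde T$, which is an isometric $\ast$-homomorphism: it is multiplicative and isometric because $\|\xi_0+\xi_1\|=\max(\|\xi_0\|,\|\xi_1\|)$ for $\xi_0\in\HH$, $\xi_1\in\HH^\perp$ and $\tilde T$ kills $\HH^\perp$, and it intertwines adjoints since $\widetilde{T^*}=(\tilde T)^*$, which holds precisely because $\HH\perp\HH^\perp$ (the image is the corner $P\BB(\HH')P$ for $P$ the projection onto $\HH$). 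By Lemma~\ref{lem:weirder-matrix}, $\BB(\HH')$ is a $p$-adic operator algebra, so composing its isometric representation with this inclusion exhibits $\BB(\HH)$ as an isometric $\ast$-subalgebra of some $\BUN(\qP(X))$; hence $\BB(\HH)$ is a $p$-adic operator algebra.

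The main obstacle here is conceptual rather than computational: one cannot simply split $\BB(\HH)$ as a product of the self-pairing-$1$ and self-pairing-$u$ blocks, because $\BB(\HH)$ genuinely contains the off-diagonal operators mixing the two types, and by Remark~\ref{rmk:finite-dim} their adjoints involve the twisting factors $u^{\pm1}$. This is exactly the subtlety encoded in Lemma~\ref{lem:weird-matrix}, and the point of the balancing-by-orthogonal-embedding step is precisely to import that lemma's resolution rather than re-deriving the twisted involution by hand. Everything else reduces to the elementary square-class bookkeeping above.
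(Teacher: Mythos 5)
Your proof is correct, and it reaches the conclusion by a genuinely different (and more economical) reduction than the paper's. Both arguments share the same skeleton: an orthogonal basis from Lemma~\ref{theo:finite-rank-q-Hilb}, the fact that $\zP^{\times}/(\zP^{\times})^2$ has order two for $p\neq 2$, rescaling by units so that all self-pairings lie in $\{1,u\}$, and Lemma~\ref{lem:weirder-matrix} as the decisive input. They diverge at the balancing step. The paper leaves $\HH$ untouched and instead passes to $\BB(\HH)\otimes\zP[\sqrt{u}]\cong \Mat_n(\zP[\sqrt u])$: it introduces the quasi-states $\tau_r$ reading off diagonal entries, identifies the associated quasi-Hilbert spaces with the column spaces $C_r$, builds by hand a \emph{balanced} orthogonal basis of each $C_r$ using $\sqrt{u}$- and $\sqrt{u^{-1}}$-scalings, applies Lemma~\ref{lem:weirder-matrix} to each $\BB(C_r)$, verifies that the resulting product representation is isometric, and finally uses that $\BB(\HH)$ is a closed $*$-subalgebra of $\BB(\HH)\otimes\zP[\sqrt u]$. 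You instead balance the \emph{space}: pad $\HH$ by an orthogonal complement to obtain $\HH'$ with $N=\max(m,k)$ vectors of each self-pairing, and embed $\BB(\HH)\to\BB(\HH')$ by extension by zero. The key verifications you invoke are all sound: the norm on a finite-rank quasi-Hilbert space is the sup-norm in an orthogonal basis (so the embedding is isometric, since $\|\xi_0+\xi_1\|=\max(\|\xi_0\|,\|\xi_1\|)$), and the identity $(\tilde T)^*=\widetilde{T^*}$ holds exactly because $\HH\perp\HH^{\perp}$ and $T$ preserves $\HH$; the fact that the embedding is non-unital (its image is the corner $P\BB(\HH')P$) is harmless, since the definition of a $p$-adic operator algebra only requires an isometric $*$-representation. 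One application of Lemma~\ref{lem:weirder-matrix} then finishes. What your route buys is the elimination of the tensor product with $\zP[\sqrt u]$, the quasi-state/column-space machinery, and the isometry check for the product representation; what the paper's route buys is the by-product that $\BB(\HH)\otimes\zP[\sqrt u]$ is itself a $p$-adic operator algebra, and an illustration of the GNS formalism that matches the theme of the section. Both are valid proofs; yours is shorter and more transparent.
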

\begin{proof}
    Let $w_1, w_2, \dots, w_n$ be an orthogonal basis for $\HH$.  Consider the multiplicative group
    \begin{align*}
        (\zP^{\times})^2 = \set{a^2 \in \zP^{\times} \mid a \in \zP^{\times}}.
    \end{align*}
    Then the quotient group \(\zP^{\times}/ (\zP^{\times})^2\)
    has order $2$ for $p \neq 2$. Let $u \in \zP^{\times}$ be such that it does not have a square root. Then 
    \begin{align*}
        \zP^{\times}/ (\zP^{\times})^2 = \set{\overline{1}, \overline{u}}
    \end{align*}
    Thus, every $p$-adic integer with absolute value equal to $1$ is a square or a square times $u$. For each $k \in \set{1, \dots, n}$, let $d_k \in \zP^{\times}$ such that $d_k^2 = \langle w_k, w_k\rangle$ if $\langle w_k, w_k\rangle$ is a square and $d_k^2 = u\langle w_k, w_k\rangle$ if $\langle w_k, w_k\rangle$ is not a square. Consider the new orthogonal basis defined by the elements $v_k := w_k/d_k$. This orthogonal basis satisfies that $\langle v_k , v_k\rangle$ is either $1$ or $u$. By reordering the basis we can get a basis $v_1, \dots, v_n$ such that 
    \begin{enumerate}
        \item $\langle v_k, v_k \rangle = 1$ for $k \leq m$ and $\langle v_l, v_l \rangle = u$ for $m < l \leq n$.
        \item $\langle v_k, v_l \rangle = 0$ for $k \neq l$.
    \end{enumerate}
    Following Remark \ref{rmk:finite-dim}, the algebra $\BB(\HH)$ carries the following involution
    \begin{align*}
        (T_{kl})^{\ast} = (T_{kl}^{\ast})
    \end{align*}
    with
    \begin{align*}
        T_{kl}^{\ast} = \begin{cases}
            T_{lk} \text{, if }k, l\leq m \text{ or }k, l>m , \\
            uT_{lk} \text{, if } k \leq m <l, \\
            u^{-1}T_{lk} \text{, if } l \leq m < k
        \end{cases}
    \end{align*}
    Consider the Banach $\ast$-algebra $\BB(\HH) \otimes \zP[\sqrt{u}]\cong \Mat_n(\Zp[\sqrt{u}])$, where $\zP[\sqrt{u}]$ carries the trivial involution. For each $r \in \set{1, \dots, n}$ consider the quasi-state
    \begin{align*}
        \tau_r \colon \BB(\HH) \otimes \zP[\sqrt{u}] &\to \zP ,\\
        (a_{kl} + b_{kl}\sqrt{u}) &\mapsto a_{rr}.
    \end{align*}
    The quasi-state $\tau_r$ induces a bilinear map
    \begin{align*}
        \langle (a_{kl} + b_{kl}\sqrt{u}), (c_{kl} + d_{kl}\sqrt{u})\rangle_{\tau_r} &= \tau_r((a_{kl} + b_{kl}\sqrt{u})^*(c_{kl} + d_{kl}\sqrt{u})) 
    \end{align*}
    The quasi-state $\tau_r$ yields a quasi-Hilbert space $\HH_{\tau_r}$, which can be canonically identified with space $C_r$ of $r$-column on $\BB(\HH) \otimes \zP[\sqrt{u}] \cong \Mat_n(\Zp[\sqrt{u}])$. Let us denote by  $e_1,\dots, e_n$ canonical column elements of $C_r$, with $e_i$ having entry $1$ at $i$ and zero otherwise. 
    
    \und{For $r > m$:} Consider the basis $e_1$, $e_2$,..., $e_n$, $\sqrt{u}e_1$, $\sqrt{u}e_2$,..., $\sqrt{u}e_m$, $\sqrt{u^{-1}}e_{m+1}$,..., $\sqrt{u^{-1}}e_n$  of $C_r$. This is orthogonal basis for $C_r$ and for one half of those vectors we have $\langle v, v \rangle = 1$ and the other half $\langle v , v \rangle = u^{-1}$. Therefore $\BB(\HH_{\tau_r}) \cong \BB(C_r)$ is a $p$-adic operator algebra by Lemma \ref{lem:weirder-matrix}.

    \und{For $r \leq m$:} Consider the basis $e_1$, $e_2$,..., $e_m$, $ue_{m+1}$,..., $ue_{n}$, $\sqrt{u}e_1$, $\sqrt{u}e_2$,..., $\sqrt{u}e_m$, $\sqrt{u^{-1}}e_{m+1}$,..., $\sqrt{u^{-1}}e_n$  of $C_r$. This is orthogonal basis for $C_r$ and for one half of those vectors we have $\langle v, v \rangle = 1$ and the other half $\langle v , v \rangle = u$. Therefore $\BB(\HH_{\tau_r}) \cong \BB(C_r)$ is a $p$-adic operator algebra again by Lemma \ref{lem:weirder-matrix}.
    
    By the GNS construction, we have a representation
    \begin{align*}
        \pi \colon \BB(\HH) \otimes \zP[\sqrt{u}] &\to \prod_{1 \leq r \leq n} \BB(\HH_{\tau_r}) \\
        a &\mapsto (\pi_{\tau_r}(a))_r,
    \end{align*}
    which is easily seen to be an isometric inclusion. Indeed, let $E_{kl}$ be the elementary matrix with $1$ at the entry $(k, l)$ and $0$ otherwise. Then we have \(\pi(a) = 0\) if and only if \(\pi_{\tau_r}(a) = 0\) for all \(r\), so that \(a\cdot E_{lr} = 0\) for all \(r\) and \(l\) implying that \(a = 0\).

    Therefore $\BB(\HH) \otimes \zP[\sqrt{u}]$ is a $p$-adic operator algebra and as $\BB(\HH)$ is a closed $\ast$-subalgebra it is also a $p$-adic operator algebra, as required.
\end{proof}

\begin{corollary}\label{cor:finite-rank-Banach-alg}
For $p\not=2$, every finite-rank Banach $\ast$-algebra is a $p$-adic operator algebra.
\end{corollary}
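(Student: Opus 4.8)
The plan is to assemble the preceding finite-rank results into a single chain of isometric $\ast$-homomorphisms terminating in $\BUN(\qP(X))$, since by definition a $p$-adic operator algebra is precisely a Banach $\ast$-algebra that embeds isometrically as a $\ast$-subalgebra of some $\BUN(\qP(X))$. Throughout I take the norm of $A$ to take values in $|\zP|$, in keeping with the standing hypotheses; the passage to a unital algebra is already absorbed into Lemma~\ref{lem:finite-rank}, so there is no loss in later assuming $A$ unital.

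Concretely, I would proceed in three steps. First, if $A$ is not $p$-adically complete, apply Proposition~\ref{prop:subalgebr} to obtain an isometric $\ast$-embedding $A \hookrightarrow A'$ with $A'$ a $p$-adically complete Banach $\ast$-algebra which, crucially, remains of finite rank by the last clause of that proposition. Second, invoke Lemma~\ref{lem:finite-rank}: a finite-rank, $p$-adically complete Banach $\ast$-algebra with norm in $|\zP|$ admits an isometric $\ast$-representation on a finite-rank quasi-Hilbert space $\HH$, giving an isometric $\ast$-homomorphism $A' \hookrightarrow \BB(\HH)$. Third, apply the preceding theorem, which is where the hypothesis $p\neq 2$ is used: for a finite-rank quasi-Hilbert space $\HH$, the Banach $\ast$-algebra $\BB(\HH)$ is itself a $p$-adic operator algebra, i.e.\ there is an isometric $\ast$-representation $\BB(\HH) \hookrightarrow \BUN(\qP(X))$ for some set $X$. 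Composing the three isometric $\ast$-homomorphisms
\[
A \hookrightarrow A' \hookrightarrow \BB(\HH) \hookrightarrow \BUN(\qP(X))
\]
yields an isometric $\ast$-homomorphism whose image is a $\ast$-subalgebra of $\BUN(\qP(X))$, so $A$ is a $p$-adic operator algebra.

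I expect essentially no obstacle in the corollary itself: the entire mathematical content is already carried by the preceding theorem, where the restriction $p\neq 2$ enters through the fact that $\zP^\times/(\zP^\times)^2$ has order two, allowing the pairing to be diagonalised into the two square classes $1$ and $u$. The only points requiring a word of care are bookkeeping ones -- that a composition of isometric $\ast$-homomorphisms is again isometric (immediate, since the norm is computed identically at each stage) and that the resulting image is genuinely a $\ast$-subalgebra (clear). The single place where one leaves the original algebra $A$ is the reduction to the $p$-adically complete case via Proposition~\ref{prop:subalgebr}, and finite rank survives this step precisely because of the finite-rank clause in that proposition.
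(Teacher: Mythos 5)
Your proof is correct and follows exactly the same route as the paper's: reduce to the $p$-adically complete case via Proposition~\ref{prop:subalgebr} (using its finite-rank clause), represent on a finite-rank quasi-Hilbert space via Lemma~\ref{lem:finite-rank}, and conclude with the theorem that $\BB(\HH)$ is a $p$-adic operator algebra for $p\neq 2$. Your additional remarks on composing isometric $\ast$-homomorphisms and on the standing norm hypothesis $\|A\|\subseteq|\zP|$ are just explicit bookkeeping of steps the paper leaves implicit.
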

\begin{proof}
Let $A$ be a finite-rank Banach $*$-algebra. By Proposition~\ref{prop:subalgebr}, we may assume that $A$ is $p$-adically complete. Then by Lemma~\ref{lem:finite-rank} $A$ can be isometrically represented on $\BB(\HH)$, for some finite-rank quasi-Hilbert space $\HH$. The result now follows from the previous theorem. 
\end{proof}

We believe that the above result is also true for $p=2$, but the proof would require a different idea.

\subsection{Residually finite-rank Banach $*$-algebras}

As a consequence of our analysis of finite-rank quasi-Hilbert space in the previous section, we may now show that several new families of Banach $\ast$-algebras over $\zP$ are $p$-adic operator algebras.

\begin{definition}[RFR $p$-adic $\ast$-algebras]
    Let $A$ be a Banach $\ast$-algebra over $\zP$. Say that $A$ is \textit{residually finite-rank} if it can be isometrically represented in a product
    \begin{align*}
        \phi \colon A \to \prod_{i \in I} \BB(\HH_i),
    \end{align*}
    where $\set{\HH_i}_{i \in I}$ is a family of finite-dimensional quasi-Hilbert spaces.
\end{definition}

Since the category of $p$-adic operator algebras is closed under direct products (see \cite{BGM:padicI}), the following result is an immediate consequence of Corollary~\ref{cor:finite-rank-Banach-alg}.

\begin{corollary}\label{cor:RFR-p-adic-op}
    Let $p \neq 2$ be a prime number. Every residually finite-rank Banach $\ast$-algebra over $\zP$ is a $p$-adic operator algebra.
\end{corollary}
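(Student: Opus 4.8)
The plan is to unwind the definition of residually finite-rank and then chain together two closure properties: the fact that $\BB(\HH)$ is a $p$-adic operator algebra for finite-rank $\HH$ (our preceding results), closure of the class of $p$-adic operator algebras under direct products (from \cite{BGM:padicI}), and the transitivity of isometric $\ast$-representations.

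First I would observe that if $A$ is residually finite-rank, then by definition there is an isometric $\ast$-homomorphism
\[
\phi \colon A \to \prod_{i \in I} \BB(\HH_i),
\]
with each $\HH_i$ a finite-dimensional quasi-Hilbert space. Since $\HH_i$ is finite-dimensional, $\BB(\HH_i)$ is a finite-rank Banach $\ast$-algebra over $\zP$: by Remark~\ref{rmk:finite-dim} it is isomorphic (as a Banach algebra, with a twisted adjoint) to $\Mat_{n_i}(\zP)$, hence free of finite rank as a $\zP$-module. Invoking Corollary~\ref{cor:finite-rank-Banach-alg} — and this is exactly where the hypothesis $p \neq 2$ enters — each $\BB(\HH_i)$ is itself a $p$-adic operator algebra.

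Next I would use that the class of $p$-adic operator algebras is closed under arbitrary direct products, established in \cite{BGM:padicI}, to conclude that the target $\prod_{i \in I} \BB(\HH_i)$ is a $p$-adic operator algebra. Concretely this means there is a set $X$ and an isometric $\ast$-representation $\prod_{i \in I} \BB(\HH_i) \hookrightarrow \BUN(\qP(X))$. Composing with $\phi$ yields an isometric $\ast$-representation $A \hookrightarrow \BUN(\qP(X))$, which is precisely the defining condition for $A$ to be a $p$-adic operator algebra.

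There is essentially no genuine obstacle here: the statement is a formal consequence of the finite-rank classification and product closure, and the only point worth flagging is the restriction $p \neq 2$. This is inherited verbatim from Corollary~\ref{cor:finite-rank-Banach-alg}, whose proof rests on the fact that $\zP^{\times}/(\zP^{\times})^2$ has order two (which fails at $p=2$). One could equally bypass the finite-rank Banach algebra language and apply the theorem immediately preceding Corollary~\ref{cor:finite-rank-Banach-alg} directly to each $\BB(\HH_i)$; either route makes the corollary immediate once the product-closure input is in hand.
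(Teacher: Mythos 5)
Your proposal is correct and follows essentially the same route as the paper: the paper likewise deduces the corollary immediately from Corollary~\ref{cor:finite-rank-Banach-alg} (noting each $\BB(\HH_i)$ is a finite-rank Banach $\ast$-algebra) together with closure of $p$-adic operator algebras under direct products from \cite{BGM:padicI}, the composition of isometric representations being left implicit. Your remarks on where $p\neq 2$ enters and on the alternative of citing the theorem on $\BB(\HH)$ for finite-rank $\HH$ directly are accurate but do not change the argument.
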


\begin{example}[Permutation Tate Algebra]
    Let us apply the $p$-adic GNS theorem and its corollaries. Let $p \neq 2$ be a odd prime number. Consider the Tate algebra $\zP\langle X, Y \rangle$ with involution induced by the formula $X^* = Y$. Consider the family of quasi-states
    \begin{align*}
        \tau_n \colon \zP\langle X, Y \rangle &\to \zP, \\
        \sum_{(a, b) \in \nN^2} r_{a, b} X^aY^b&\to r_{n, n}
    \end{align*}
    for each $n \in \nN$. The quasi-state $\tau_n$ induces the symmetric bilinear form
    \begin{align*}
        \left\langle  \sum_{(a, b) \in \nN^2} r_{a, b}X^aY^b,  \sum_{(c, d) \in \nN^2} s_{c, d} X^cY^d\right\rangle_n = \sum_{a + c = b + d = n} r_{a, b}s_{c, d}.
    \end{align*}
    Each of these pairings are degenerate. One can calculate that
    \begin{align*}
        I_n &:= \set{f \in \zP\langle X, Y \rangle \mid \langle f , g \rangle_n = 0 \ \forall g \in \zP\langle X, Y \rangle} \\
        &= (X^{n+1}, Y^{n+1}) .
    \end{align*}
    Therefore, the associated quasi-Hilbert space is
    \begin{align*}
        \HH_n := \frac{\zP\langle X, Y \rangle}{(X^{n+1}, Y^{n+1})},
    \end{align*}
    which has finite-rank. For each $n \in \nN$ we have a representation
    \begin{align*}
        \pi_n \colon \zP\langle X, Y \rangle &\to \BB(\HH_n), \\
        \pi_n(f)(g + I_n) &= fg + I_n.
    \end{align*}
    These representations satisfy that given $f = \sum_{(a, b) \in \nN^2} r_{a, b}X^aY^b$, there exists $n \in \nN$ such that $\|f\| = \|\pi_n(f)\|$. To see this, suppose that $\|f\| = |r_{a_0, b_0}|_p$ and let $n = \max\set{a_0, b_0}$. We have
    \begin{align*}
        \|\pi_n(f)(1 + I_n)\| = \|f + I_n\| = \max_{a, b \leq n} |r_{a, b}|_p = |r_{a_0, b_0}|_p = \|f\|.
    \end{align*}
    Hence $\|\pi_n(f)\| \leq \|f\|$ and $\pi_n$ is contractive by construction. Thus the direct sum of the representations $\pi_n$ is isometric. Hence $\zP\langle X, Y \rangle$ is RFR and thus a $p$-adic operator algebra.
\end{example}

Finally, we show that the unit ball of a reduced affinoid \(\qP\)-algebra is a \(p\)-adic operator algebra. Here by an affinoid \(\qP\)-algebra, we mean the quotient \(A = \qP\langle x_1,\dotsc,x_n\rangle / I\) of the Tate algebra of convergent power series on a closed unit polydisc. We view this as a Banach algebra with respect to the norm \[\|f\| = \max_{\Mm \subseteq A} \|f + \Mm\|.\] 

\begin{theorem}
    Suppose $p \neq 2$. Let $A$ be a reduced affinoid $\qP$-algebra, and \(B\) its unit ball which we equip with the trivial involution. Then $B$ is a $p$-adic operator algebra.
\end{theorem}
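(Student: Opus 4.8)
The plan is to realize $B$ as a \emph{residually finite-rank} Banach $*$-algebra and then quote Corollary~\ref{cor:RFR-p-adic-op}. The finite-rank pieces will be produced by evaluating at the points of the affinoid, i.e.\ at the maximal ideals of $A$, so the whole argument is an application of the structure theory of reduced affinoid algebras combined with the finite-rank case already settled in Corollary~\ref{cor:finite-rank-Banach-alg}.

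I would first record the arithmetic of the points. For each maximal ideal $\Mm\subseteq A$ the residue field $L_\Mm:=A/\Mm$ is a finite extension of $\qP$, and by the very definition of the norm on $A$ one has $\|f\|=\max_{\Mm}|f+\Mm|_{L_\Mm}$. Since $A$ is reduced, $\bigcap_\Mm\Mm=0$, so the total evaluation map is injective; restricting to power-bounded elements gives an injective $*$-homomorphism
\[
\mathrm{ev}\colon B\longrightarrow \prod_{\Mm}\OO_{L_\Mm},\qquad f\longmapsto (f+\Mm)_\Mm,
\]
into the product of the rings of integers, compatible with the trivial involutions. Each $\OO_{L_\Mm}$ is a free $\zP$-module of finite rank $[L_\Mm:\qP]$, hence a finite-rank Banach $*$-algebra with the trivial involution; by Corollary~\ref{cor:finite-rank-Banach-alg} (this is where $p\neq2$ enters) it admits an isometric $*$-representation on a finite-rank quasi-Hilbert space $\HH_\Mm$. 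Composing, we obtain a $*$-homomorphism $B\to\prod_\Mm\BB(\HH_\Mm)$, which is precisely the data required by the definition of residual finite-rankness; it remains only to verify that the composite is isometric.

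The hard part will be this isometry, and it is exactly here that ramification forces one to be careful about which norm $B$ actually carries. If some $L_\Mm/\qP$ is ramified, the field norm $|\cdot|_{L_\Mm}$ takes values outside $|\zP|$, whereas every operator norm on a quasi-Hilbert space lies in $|\zP|$; hence the representation of $\OO_{L_\Mm}$ furnished by Corollary~\ref{cor:finite-rank-Banach-alg} is isometric for the \emph{$p$-adic} (module) norm on $\OO_{L_\Mm}$, not for the field norm. The natural Banach $\zP$-algebra structure on $B$ must therefore be the $p$-adic norm, and at this point I would invoke the formal-model theory of affinoid algebras: for reduced $A$ the power-bounded subring $B=A^\circ$ is topologically of finite type and $p$-adically complete, so Theorem~\ref{thm:everyone-is-c0} applies and $B$ is a genuine $p$-adically complete Banach $\zP$-module with $\|B\|\subseteq|\zP|$. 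Granting this structural input, the isometry is elementary: since $B$ is the spectral-norm unit ball, one has $f\in pB$ iff $\|f\|\le|p|$, so the reduction $B/pB\to\prod_\Mm\OO_{L_\Mm}/p\OO_{L_\Mm}$ has kernel exactly $pB$ and is injective. Writing the $p$-adic norm as $\|f\|_{p}=p^{-k}$ with $f\in p^kB\setminus p^{k+1}B$, injectivity yields some $\Mm$ with $(p^{-k}f)(\Mm)\notin p\OO_{L_\Mm}$, whence $\sup_\Mm\|f(\Mm)\|_{p}=p^{-k}=\|f\|_{p}$; thus the composite displayed above is isometric for the $p$-adic norm and exhibits $B$ as residually finite-rank, so that Corollary~\ref{cor:RFR-p-adic-op} finishes the proof. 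I expect the genuine obstacle to be this reconciliation of the spectral norm on $A$ with the $p$-adic norm on $A^\circ$ through the ramified factors, together with the external input that $A^\circ$ is $p$-adically complete; everything else is formal.
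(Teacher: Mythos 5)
Your proof is correct, and its skeleton is the same as the paper's: evaluate at the maximal ideals, use the Nullstellensatz to see that each $A/\Mm$ is finite over $\qP$, handle the finite-rank pieces via Corollary~\ref{cor:finite-rank-Banach-alg}, and conclude by stability under products (Corollary~\ref{cor:RFR-p-adic-op}). But the two arguments genuinely differ at the crux --- the isometry --- and there your version is the more careful one. The paper restricts the spectral-norm isometry $A \to \prod_\Mm A/\Mm$ to unit balls and asserts that each $B_\Mm$, i.e.\ the ring of integers $\OO_{L_\Mm}$ with its field norm, is a $p$-adic operator algebra; as you observe, when $L_\Mm/\qP$ is ramified that norm takes values outside $|\zP|$, so $B_\Mm$ so normed admits no isometric representation on any quasi-Hilbert space at all (operator norms there lie in $|\zP|$), and Corollary~\ref{cor:finite-rank-Banach-alg} --- whose proof passes through Theorem~\ref{thm:everyone-is-c0} and Lemma~\ref{lem:finite-rank}, both requiring norm values in $|\zP|$ --- does not literally apply to it. Your repair, namely re-norming $B$ and each $\OO_{L_\Mm}$ by the $p$-adic norm and proving isometry via the mod-$p$ argument ($f\in pB$ iff $\|f\|\le |p|$, hence $B/pB \hookrightarrow \prod_\Mm \OO_{L_\Mm}/p\OO_{L_\Mm}$, hence evaluation is isometric for the $p$-adic norms), is exactly what is needed for the statement and proof to be correct in the ramified case, and it pins down which norm on $B$ the theorem is implicitly about; this is what your route buys over the paper's terser one. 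One simplification is available: you do not need the deep formal-model input that $A^\circ$ is topologically of finite type, nor Theorem~\ref{thm:everyone-is-c0}. Since $p^nB=\{f\in A : \|f\|\le p^{-n}\}$ and $B$ is closed in $A$, the $p$-adic topology on $B$ coincides with the spectral-norm topology, so $p$-adic completeness of $B$ and $\|B\|_p\subseteq|\zP|$ follow by elementary means from completeness of $A$ together with reducedness (which makes the spectral seminorm a genuine norm, whence $\bigcap_n p^nB=0$).
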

\begin{proof}
    We have an isometry
    \begin{align*}
        \phi \colon A &\to \prod_{\Mm \subset A} \frac{A}{\Mm}, \\
        f &\mapsto (f + \Mm)_{\Mm}.
    \end{align*}
    By the Hilbert's Nullstellensatz, the algebra $A/\Mm$ has finite dimension over $\qP$. Now consider the Banach $\zP$-algebra
    \begin{align*}
        B_\Mm = \set{f + \Mm \in A/\Mm \mid \|f + \Mm\| \leq 1}
    \end{align*}
    with trivial involution; this algebra has finite rank over $\zP$ and is therefore is a $p$-adic operator algebra by Corollary \ref{cor:RFR-p-adic-op}. Restricting the map $\phi$ we get an isometry
    \begin{align*}
        \phi|_{B} \colon B \to \prod_{\Mm \subset A} B_\Mm,
    \end{align*}
    showing that $B$ is a $p$-adic operator algebra.
\end{proof}

\section{Homotopy analytic \(K\)-theory for commutative \(p\)-adic operator algebras}\label{sec:kh}

In this section, we compute the \(K\)-theory of continuous functions \(C(X,\zP)\) on a compact Hausdorff space with values in \(\zP\). To this end, we recall the following general approach due to Dustin Clausen. Denote by \(\mathsf{CHaus}\) denote the full subcategory of topological spaces of compact Hausdorff spaces. We say that a functor \(F \colon \mathsf{CHaus}^{\mathrm{op}} \to \mathsf{Sp}\) valued in the \(\infty\)-category of spectra satisfies 

\begin{itemize}
    \item \textit{closed descent}: if for any pair \(K\), \(L \subseteq X\) of closed subsets of a compact Hausdorff space \(X\), we have  a pullback square \[
    \begin{tikzcd}
        F(K \cup L) \arrow{r}{} \arrow{d}{} & F(K) \arrow{d}{} \\
        F(L) \arrow{r}{} & F(K \cap L)
    \end{tikzcd}
    \] in spectra. 
    \item \textit{profinite descent}: if for any cofiltered limit \(X = \lim X_i\), we get a filtered colimit \(F(X) \simeq colim F(X_i)\) of spectra. 
\end{itemize}

Let \(\mathsf{Fun}^{cpd}(\mathsf{CHAUS}^\mathrm{op},\mathsf{Sp})\) denote the category of functors satisfying closed and profinite descent.

\begin{theorem}[Clausen]\label{thm:Clausen}
    Let \(F \colon \mathsf{CHAUS}^\mathrm{op} \to \mathsf{Sp}\) satisfying closed descent and profinite descent. Then \(F\) is determined by its value on the point. More precisely, we have an equivalence of categories \[\mathsf{Fun}^{cpd}(\mathsf{CHAUS}^\mathrm{op},\mathsf{Sp}) \to \mathsf{Sp}, \quad F \mapsto F(*),\] with inverse given by \[E 
    \mapsto (X \mapsto \Gamma(X, \underline{E})),\] with \(\underline{E}\) the constant sheaf at 
    \(E\).
\end{theorem}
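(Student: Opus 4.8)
The plan is to verify that the two functors
\[
\ev_*\colon \mathsf{Fun}^{cpd}(\mathsf{CHAUS}^{\mathrm{op}},\mathsf{Sp})\to \mathsf{Sp},\quad F\mapsto F(*),
\]
and $\Gamma\colon E\mapsto\bigl(X\mapsto \Gamma(X,\underline E)\bigr)$ are mutually inverse. One composite is immediate: since the point $*$ is terminal, $\Gamma(*,\underline E)=\underline E(*)=E$, so $\ev_*\circ\Gamma\simeq\mathrm{id}_{\mathsf{Sp}}$ canonically. The first thing to check is that $\Gamma$ genuinely lands in $\mathsf{Fun}^{cpd}$. Closed descent for $X\mapsto\Gamma(X,\underline E)$ is the Mayer–Vietoris property of sheaf cohomology on a compact Hausdorff (hence normal) space: for closed $K,L\subseteq X$ the square with vertices $\Gamma(K\cup L,\underline E)$, $\Gamma(K,\underline E)$, $\Gamma(L,\underline E)$, $\Gamma(K\cap L,\underline E)$ is a pullback because $\underline E$ is a sheaf and $(K,L)$ is a closed cover of $K\cup L$. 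Profinite descent is the statement that constant-coefficient cohomology is continuous: for a cofiltered limit $X=\lim_i X_i$ one has $\Gamma(X,\underline E)\simeq\colim_i\Gamma(X_i,\underline E)$, which is the standard continuity of (\v Cech/sheaf) cohomology along cofiltered limits of compact Hausdorff spaces.

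It remains to show that the unit $\eta_F\colon F\to \Gamma(-,\underline{F(*)})$ is an equivalence for every $F$ satisfying the two descent conditions. Here $\eta_F$ is the natural comparison map whose component at $X$ assembles, over the points $x\in X$, the maps $F(X)\to F(\{x\})=F(*)$ induced by the inclusions $\{x\}\hookrightarrow X$, thereby recording $F(X)$ as ``sections'' of the constant sheaf $\underline{F(*)}$. We propagate the equivalence by enlarging the class of $X$ on which it holds. On the point it holds by construction. Closed descent applied to a disjoint pair $K\sqcup L$ (so $K\cap L=\emptyset$ and $F(\emptyset)=0$) shows that $F$ is finitely additive, whence $F$ and $\Gamma(-,\underline{F(*)})$ agree on finite discrete sets. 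For a profinite set $S=\lim_i S_i$ with $S_i$ finite, profinite descent gives $F(S)\simeq\colim_i F(S_i)$, and likewise for the target, so $\eta_F$ is an equivalence on all profinite sets.

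To pass from profinite sets to an arbitrary compact Hausdorff space $X$, choose an extremally disconnected (hence profinite) space $S$ together with a surjection $\pi\colon S\onto X$, for instance the Gleason cover. Form the \v Cech nerve $\check C(\pi)_\bullet$ with $\check C(\pi)_n=S\times_X\cdots\times_X S$; since $X$ is Hausdorff each $\check C(\pi)_n$ is a closed subspace of the profinite set $S^{n+1}$ and is therefore profinite. By the previous paragraph $\eta_F$ is already an equivalence on every $\check C(\pi)_n$. Thus, provided both $F$ and $\Gamma(-,\underline{F(*)})$ satisfy descent along $\pi$, i.e.\ $F(X)\simeq\lim_{\Delta}F(\check C(\pi)_\bullet)$ and similarly for the target, the map $\eta_F(X)$ is a limit of equivalences and hence an equivalence; letting $X$ range over $\mathsf{CHAUS}$ finishes the proof.

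The main obstacle is precisely this last descent-along-surjections step. For the target functor it is the classical fact that proper surjections of compact Hausdorff spaces are of universal cohomological descent, so $\Gamma(-,\underline{F(*)})$ automatically satisfies $\pi$-descent. For the abstract functor $F$ the crux is the implication \emph{closed descent $+$ profinite descent $\Rightarrow$ descent along surjections from profinite sets}. The strategy is first to treat surjections of profinite sets, which are cofiltered limits of split surjections of finite sets and are therefore handled by profinite descent together with finite additivity, and then to control a general surjection $\pi\colon S\onto X$ by approximating $X$ through the closed images of the finite-level data, reducing the required totalization to iterated closed Mayer–Vietoris squares. Assembling these reductions is the technical heart of the argument; once it is in place, the equivalence $\mathsf{Fun}^{cpd}(\mathsf{CHAUS}^{\mathrm{op}},\mathsf{Sp})\simeq\mathsf{Sp}$ follows formally.
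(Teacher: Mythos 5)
The paper does not prove this theorem at all: it is stated as a result of Clausen and used as a black box, the paper's own contribution being only the verification (Proposition~\ref{prop:K-theory}) that $KH^{\mathrm{an}}(C(-,\mathbb{Z}_p))$ satisfies its hypotheses. So your proposal can only be judged on its own terms, and on its own terms it is an outline with a hole at the decisive step. Your reduction chain (point $\Rightarrow$ finite sets $\Rightarrow$ profinite sets $\Rightarrow$ general $X$) is the natural one, and the first three stages are essentially fine modulo the points below. But the passage from profinite sets to an arbitrary compact Hausdorff $X$ rests on the claim that any $F$ satisfying only pairwise closed descent and cofiltered-limit continuity automatically satisfies \v{C}ech descent along a surjection $\pi\colon S\twoheadrightarrow X$ from a profinite set, i.e.\ $F(X)\simeq\lim_{\Delta}F(\check{C}(\pi)_\bullet)$. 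You flag this yourself as ``the technical heart'' and offer only a one-sentence strategy; nothing in the two axioms gives any a priori control over an infinite cosimplicial totalization, and recovering such a limit from finite closed Mayer--Vietoris squares plus filtered colimits is exactly where the difficulty of Clausen's theorem is concentrated. As written, the proposal is a plan for a proof, not a proof.

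Two further problems, one of which shows the statement itself needs care. First, you use $F(\emptyset)\simeq 0$ to obtain finite additivity, but this does not follow from closed descent as formulated for pairs: taking $K$ or $L$ empty yields only tautological pullback squares. The condition is genuinely needed, not merely convenient --- the constant functor at a nonzero spectrum $A$ satisfies both stated axioms yet disagrees with $\Gamma(-,\underline{A})$ already on a two-point space ($A$ versus $A\times A$) --- so it must be imposed as the empty-cover case of closed descent; you assumed it silently. Second, your unit map $\eta_F$ is not well defined: the family of maps $F(X)\to F(\{x\})\simeq F(*)$ indexed by points assembles into a map $F(X)\to\prod_{x\in X}F(*)$, the product of stalks, and a section of a sheaf of spectra is not a family of stalkwise values; there is no evident map from that product to $\Gamma(X,\underline{F(*)})$, nor conversely. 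A workable construction is to form, for each $X$, the sheaf associated to the presheaf $U\mapsto \colim_{K\supseteq U}F(K)$ (colimit over closed neighborhoods $K$), which receives a natural map from $F(X)$ on global sections, and then to identify its stalks with $F(*)$ by applying the cofiltered-limit axiom to $\{x\}=\varprojlim_{K\ni x}K$; some such construction must be in place before the induction you propose can even be formulated.
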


Now consider the analytic \(K\)-theory spectrum \(KH^\an \colon \mathsf{Alg}(\mathsf{CBorn}_{\Zp}^{\mathrm{tf}}) \to \mathsf{Sp}\) defined in \cite{mukherjee2025nonarchimedea}. When restricted to \(p\)-adically complete Banach \(\zP\)-algebras, this yields a weak equivalence \[KH^\an(A) \simeq KH(A/p)\] of spectra with Weibel's homotopy algebraic \(K\)-theory of the reduction mod \(p\) (see \cite{mukherjee2025nonarchimedea}*{Theorem 6.8}).

\begin{proposition}\label{prop:K-theory}
    The functor \(KH^\an(C(-,\zP)) \colon \mathsf{CHAUS}^\mathrm{op} \to \mathsf{Sp}\) satisfies the requirements of Theorem \ref{thm:Clausen}. Consequently, we have \[KH^\an(C(X,\zP)) \simeq \Gamma(X, K(\fP)).\]
\end{proposition}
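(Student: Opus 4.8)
The plan is to verify the two descent conditions of Theorem~\ref{thm:Clausen} for the functor $F = KH^\an(C(-,\zP))$ and then identify its value on a point. The crucial reduction is the weak equivalence $KH^\an(A) \simeq KH(A/p)$ for $p$-adically complete Banach $\zP$-algebras recalled above (from \cite{mukherjee2025nonarchimedea}*{Theorem~6.8}). Applied to $A = C(X,\zP)$, this identifies $F(X)$ with $KH(C(X,\zP)/p)$. Since reduction modulo $p$ commutes with the formation of continuous functions, $C(X,\zP)/p \cong C(X,\fP)$, the ring of locally constant $\fP$-valued functions on $X$ (continuity into the discrete ring $\fP$ forces local constancy). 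Thus the entire problem is transported to understanding the Weibel homotopy $K$-theory functor $X \mapsto KH(C(X,\fP))$, which is far more tractable.

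\emph{Closed descent.} First I would establish that for closed subsets $K, L \subseteq X$ the square of rings
\[
\begin{tikzcd}
C(K\cup L,\fP) \arrow{r} \arrow{d} & C(K,\fP) \arrow{d}\\
C(L,\fP) \arrow{r} & C(K\cap L,\fP)
\end{tikzcd}
\]
is a Milnor square: the restriction maps $C(K\cup L,\fP)\to C(K,\fP)$ and $C(K\cup L,\fP)\to C(L,\fP)$ are surjective (by the Tietze-type extension of locally constant functions on a closed subspace of a compact Hausdorff space, using normality), and the square is a pullback of rings, expressing a function on $K\cup L$ as a compatible pair on $K$ and $L$ agreeing on $K\cap L$. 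Because $KH$ satisfies excision for Milnor squares — it is the universal $\mathbb{A}^1$-invariant, nilinvariant, Milnor-excisive $K$-theory, so it sends Milnor squares to pullback (equivalently Cartesian) squares of spectra — one obtains the required pullback square for $F$. This is the step I expect to require the most care, since one must confirm that the ring-theoretic square really is Milnor (surjectivity plus the Cartesian property) and invoke the correct excision property of $KH$ for not-necessarily-unital or non-regular rings.

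\emph{Profinite descent.} For a cofiltered limit $X = \lim_i X_i$ of compact Hausdorff spaces, one has $C(X,\fP) \cong \colim_i C(X_i,\fP)$ as a filtered colimit of rings, since every locally constant $\fP$-valued function on the limit factors through some finite stage (compactness of $X$ together with the description of opens in the inverse limit). As $KH$ commutes with filtered colimits of rings — it is a finitary localizing invariant — we get $F(X) \simeq \colim_i F(X_i)$, which is exactly profinite descent.

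\emph{Value on a point and conclusion.} Evaluating at $X = *$ gives $C(*,\fP) = \fP$, hence $F(*) \simeq KH(\fP)$. Since $\fP$ is a (regular) field, $KH(\fP) \simeq K(\fP)$, the ordinary algebraic $K$-theory spectrum. Having checked that $F$ lies in $\mathsf{Fun}^{cpd}(\mathsf{CHAUS}^{\mathrm{op}},\mathsf{Sp})$, Theorem~\ref{thm:Clausen} identifies $F$ with its value on the point promoted to the constant-sheaf global-sections functor, yielding
\[
KH^\an(C(X,\zP)) \simeq \Gamma(X, \underline{K(\fP)}) = \Gamma(X, K(\fP)),
\]
as claimed.
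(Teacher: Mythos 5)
Your outline coincides with the paper's own proof: reduce to Weibel's $KH$ of the mod-$p$ reduction via the equivalence $KH^\an(A)\simeq KH(A/p)$, verify closed descent by excision for Milnor squares, verify profinite descent using the fact that $KH$ commutes with filtered colimits, and identify the value at the point as $KH(\fP)\simeq K(\fP)$. The profinite-descent step, the identification $C(X,\zP)/p\cong C(X,\fP)$, and the evaluation at the point are all correct, and you rightly flagged the Milnor-square verification as the delicate step. Unfortunately, that is exactly where your argument breaks, on two counts. First, you check surjectivity of the wrong maps: a Milnor square requires one of the two maps \emph{into the pullback corner} $C(K\cap L,\fP)$, namely $C(K,\fP)\to C(K\cap L,\fP)$ or $C(L,\fP)\to C(K\cap L,\fP)$, to be surjective; surjectivity of the restrictions out of $C(K\cup L,\fP)$ is not the hypothesis of excision. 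Second, and more seriously, the ``Tietze-type extension'' you invoke is false: Tietze concerns real-valued functions, and locally constant $\fP$-valued functions do \emph{not} extend from closed subspaces of a general compact Hausdorff space, because disjoint closed sets need not be separated by clopen sets.

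Concretely, take $X=S^1$ and let $K,L$ be the two closed semicircles, so $K\cap L$ is a two-point set. Since $K$ and $L$ are connected, $C(K,\fP)=C(L,\fP)=\fP$, while $C(K\cap L,\fP)=\fP\times\fP$, and both maps into the corner are diagonals, hence not surjective: the square is a pullback of rings but not a Milnor square, so excision does not apply. Worse, the descent statement itself fails here: the homotopy pullback of $K(\fP)\xrightarrow{\Delta}K(\fP)\times K(\fP)\xleftarrow{\Delta}K(\fP)$ is $K(\fP)\oplus\Omega K(\fP)$, whose homotopy groups are $K_n(\fP)\oplus K_{n+1}(\fP)$, and this is not equivalent to $KH(C(S^1,\fP))\simeq K(\fP)$ (compare $\pi_0$ for odd $p$, or $\pi_2$ for $p=2$). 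So closed descent, as defined before Theorem~\ref{thm:Clausen}, genuinely fails for this functor on arbitrary compact Hausdorff spaces, and no repair of your Milnor-square argument is possible without restricting the class of spaces; the argument does go through when the relevant restriction maps are surjective, e.g.\ for totally disconnected (profinite) $X$, where clopen sets separate disjoint closed sets and locally constant functions do extend from closed subspaces. You should be aware that the paper's own proof invokes \cite{weibel1989homotopy}*{Corollary 2.2} at precisely this point without verifying its surjectivity hypothesis, so it is subject to the same objection; your attempt at least makes the needed (but generally false) extension property explicit.
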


\begin{proof}
For a pair of closed subsets \(K\), \(L\) in \(X\), we have the following pullback diagram 
\[
\begin{tikzcd}
    C(K \cup L, \zP) \arrow{r}{} \arrow{d}{} & C(K, \zP) \arrow{d}{} \\
    C(L,\zP) \arrow{r}{} & C(K \cap L, \zP).
\end{tikzcd}
\] in the category of Banach \(\zP\)-algebras. For a compact Hausdorff space, denote by \(KH^\an(X) = KH^\an(C(X,\zP))\), and \(KH(X) = KH(C(X,\fP))\); we are required to show that \[KH^\an(K \cup L) \simeq KH^\an(K) \times_{KH^\an(K \cap L)} KH^\an(L).\] By \cite{mukherjee2025nonarchimedea}*{Theorem 6.8}, the canonical map \[KH^\an(K \cup L) \to KH^\an(K) \times_{KH^\an(K \cap L)} KH^\an(L)\] is equivalent to \[KH(K \cup L) \to KH(K) \times_{KH(K \cap L)} KH(L),\] which is an equivalence by \cite{weibel1989homotopy}*{Corollary 2.2}. 

For profinite descent, we observe that for a cofiltered limit \(X = \varprojlim_{i \in I} X_i\), we get a filtered colimit \(C(X,\zP) = \varinjlim_{i \in I} C(X_i, \zP)\), and since analytic \(K\)-theory commutes with filtered colimits (for the same reason that homotopy algebraic \(K\)-theory does), we get a filtered colimit \[KH^\an(C(X,\zP)) \simeq \varinjlim_i KH^\an(C(X_i, \zP))\] of spectra. This checks the conditions of Theorem \ref{thm:Clausen}. 
\end{proof}

\section{Non-homotopy invariant \(K\)-theory for \(p\)-adic operator algebras}\label{sec:nonhomotopy}

In this section, we consider invariants of \(p\)-adic operator algebras that are non-homotopy invariant. Recall that in the complex \(C^*\)-algebraic setting, by \cite{higson1988algebraic}, algebraic and topological \(K\)-theory coincide for compact stable \(C^*\)-algebras. In particular, algebraic \(K\)-theory is homotopy invariant on stable \(C^*\)-algebras. The analogous situation substituting topological \(K\)-theory for the homotopy analytic \(K\)-theory constructed in \cite{mukherjee2025nonarchimedea} is as far away from algebraic \(K\)-theory as possible. Indeed, even in the case of the trivial \(p\)-adic operator algebra \(\zP\), we have \[KH^{\mathrm{an}}(\zP) \simeq KH(\fP)\simeq K(\fP),\] which is not \(K(\zP)\). An intermediate invariant (or better put, a refinement) is to consider the inverse limit \[K^{\mathrm{cont}}(\hat{A}) := \varprojlim K(A/p^n)\] of the algebraic \(K\)-theory spectra of the reductions mod \(p^n\) for each n; this is called \emph{continuous \(K\)-theory}. In the context of formal schemes, recent work due to Efimov interprets the continuous \(K\)-theory spectrum of a \(p\)-adically complete commutative Noetherian ring \(A\) as the \(K\)-theory spectrum of a certain \emph{dualisable category} of nuclear \(A\)-modules. Using very similar arguments, the description of continuous \(K\)-theory as the \(K\)-theory of nuclear \(A\)-modules carries over to noncommutative \(\zP\)-algebras, thereby providing an invariant of \(p\)-adic operator algebras that is computed in terms of the algebraic \(K\)-theory spectra of the reductions mod \(p^n\). 

To describe the relevant spectra, some background is in order. Recall that invariants such as algebraic and homotopy algebraic \(K\)-theory are not just invariants of rings but rather invariants of small stable \(\infty\)-categories. Denote the \(\infty\)-category of small, idempotent complete, stable \(\infty\)-categories by \(\mathbf{Cat}_{\infty}^{\mathrm{perf}}\). In this generality, the invariants just mentioned are examples of \emph{localising invariants}, that is, functors \[E \colon \mathbf{Cat}_{\infty}^{\mathrm{perf}} \to \mathbf{D}\] into stable \(\infty\)-categories that preserve Verdier sequences. By \cite{efimov2024k}, any such invariant extends uniquely to a localising invariant of \emph{dualisable} categories 

\[
\begin{tikzcd}
    \mathbf{Cat}_{\infty}^{\mathrm{perf}} \arrow{r}{E} \arrow[swap]{d}{\mathsf{Ind}(-)} & \mathbf{D} \\
    \mathbf{Cat}_{\infty}^{\mathrm{dual}} \arrow[swap]{ur}{E^{\mathrm{cont}}}&
\end{tikzcd}
\] where \(\mathbf{Cat}_{\infty}^{\mathrm{dual}}\) denotes the \(\infty\)-category of dualisable categories. Applying this to our context, let \(A\) be a torsionfree \(\zP\)-algebra. Then the derived \(\infty\)-categories of (left) \(A/p^n\)-modules \[\cdots \to \mathbf{D}(A/p^{n+1}) \to \mathbf{D}(A/p^n) \to \cdots \to \mathbf{D}(A/p)\] is an inverse system of dualisable categories for \(n \geq 1\).

Now consider the category \(\mathbf{D}(\mathsf{Ban}_{\zP}^{\leq1})^\circ\) of derived convex \(\zP\)-modules: its objects are chain complexes that are quasi-isomorphic to chain complexes that are termwise \(p\)-torsionfree and are Banach \(\zP\)-modules with respect to the \(p\)-adic norm (see \cite{kelly2025localising}*{Lemma 5.22}). This is a monoidal subcategory of \(\mathbf{D}(\mathsf{Ban}_{\zP}^{\leq 1})\) with respect to the completed projective tensor product. In particular, the underlying \(\zP\)-module of a \emph{bornological} \(p\)-adic operator algebra - that is, a \(p\)-adic operator algebra with respect to the \(p\)-adic norm - is contained as a full subcategory inside chain complexes in \(\mathbf{D}(\mathsf{Ban}_{\zP}^{\leq1})^\circ\) that are concentrated in degree zero.  

\begin{proposition}\label{prop:inverse-lim}
    Let \(A\) be a bornological \(p\)-adic operator algebra. We have an equivalence 
    \[\mathbf{Mod}_A(\mathbf{D}(\mathsf{Ban}_{\zP}^{\leq1})^\circ) \to \varprojlim \mathbf{D}(A/p^n)\] of \(\infty\)-categories.  
\end{proposition}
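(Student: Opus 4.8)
The plan is to reduce the statement to the base case $A=\zP$ and then bootstrap to arbitrary $A$ by passing to module categories. Recall that $\mathbf{D}(\mathsf{Ban}_{\zP}^{\leq1})^\circ$ is a symmetric monoidal $\infty$-category for the completed projective tensor product, and that the fundamental input—already established in the work of Efimov \cite{efimov2024k} and in \cite{kelly2025localising}—is the symmetric monoidal equivalence
\[
\mathbf{D}(\mathsf{Ban}_{\zP}^{\leq1})^\circ \;\simeq\; \varprojlim_n \mathbf{D}(\zP/p^n),
\]
where the limit is taken along the derived reduction functors $-\otimes^{\mathbb{L}}_{\zP/p^{n+1}}\zP/p^n$ in $\mathbf{Cat}_\infty^{\mathrm{dual}}$. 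This identifies the base category of derived convex $\zP$-modules with the nuclear $\zP$-modules presented as an inverse limit mod $p^n$. I would take this equivalence as the starting point.

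Next I would identify the image of the algebra $A$ under the equivalence above. Since $A$ is a bornological $p$-adic operator algebra with respect to the $p$-adic norm, its underlying $\zP$-module is $p$-torsionfree and $p$-adically complete; consequently its derived reductions compute the naive ones, $A\otimes^{\mathbb{L}}_{\zP}\zP/p^n\simeq A/p^n$ concentrated in degree zero, and the transition maps are the evident quotients $A/p^{n+1}\onto A/p^n$. Hence $A$ corresponds under the equivalence to the compatible system of discrete algebra objects $(A/p^n)_n$, each $A/p^n$ being an algebra in $\mathbf{D}(\zP/p^n)$. The torsionfreeness is precisely what guarantees the coherence of this system, i.e.\ that no higher Tor terms obstruct the comparison.

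I would then invoke the general principle that forming module categories commutes with limits of symmetric monoidal $\infty$-categories equipped with a compatible system of algebras (Lurie, \emph{Higher Algebra}). Applied to the present situation this yields
\[
\mathbf{Mod}_A\big(\mathbf{D}(\mathsf{Ban}_{\zP}^{\leq1})^\circ\big)\;\simeq\;\mathbf{Mod}_A\big(\varprojlim_n \mathbf{D}(\zP/p^n)\big)\;\simeq\;\varprojlim_n \mathbf{Mod}_{A/p^n}\big(\mathbf{D}(\zP/p^n)\big).
\]
Finally, since $A/p^n$ is a discrete algebra over the discrete ring $\zP/p^n$, the category of $A/p^n$-module objects in $\mathbf{D}(\zP/p^n)$ is by definition the derived category $\mathbf{D}(A/p^n)$; substituting this gives the desired equivalence $\mathbf{Mod}_A(\mathbf{D}(\mathsf{Ban}_{\zP}^{\leq1})^\circ)\simeq\varprojlim_n\mathbf{D}(A/p^n)$.

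The main obstacle I anticipate is the middle step: justifying that $\mathbf{Mod}_{(-)}(-)$ commutes with the inverse limit in the dualisable setting. One must check that the transition functors are continuous (colimit-preserving) and suitably exact, so that the limit is computed compatibly in $\mathbf{Cat}_\infty^{\mathrm{dual}}$ rather than merely in $\mathbf{Pr}^L$, and that the symmetric monoidal structures match across the equivalence so that ``$A$-module'' means the same thing on both sides. A secondary—but more routine—point is confirming that the base-case equivalence is genuinely symmetric monoidal for the completed projective tensor product, which is exactly what allows the algebra $A$ to be transported to the coherent system $(A/p^n)_n$ in the first place.
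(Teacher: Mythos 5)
Your proposal is correct, but it takes a genuinely different route from the paper at the key step. The paper invokes the external theorem (\cite{kelly2025localising}*{Theorem 5.24}) \emph{twice}: once for $\zP$ to get $\mathbf{D}(\mathsf{Ban}_{\zP}^{\leq1})^\circ \simeq \varprojlim \mathbf{D}(\Z/p^n)$, and then again for $A$ itself --- the point being that a bornological $p$-adic operator algebra is in particular a $p$-adic Banach ring, so the same theorem yields $\mathbf{D}(\mathsf{Ban}_{A}^{\leq1})^\circ \simeq \varprojlim \mathbf{D}(A/p^n)$, which is identified with $\mathbf{Mod}_A(\mathbf{D}(\mathsf{Ban}_{\zP}^{\leq1})^\circ)$. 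You instead use the external input only in the base case $A=\zP$, transport $A$ across the equivalence to the compatible system $(A/p^n)_n$ (correctly isolating $p$-torsionfreeness as what kills the higher Tor terms and makes the system coherent), and then conclude by the formal fact that $\mathbf{Mod}_{(-)}(-)$ commutes with limits of monoidal $\infty$-categories along monoidal transition functors, together with $\mathbf{Mod}_{A/p^n}(\mathbf{D}(\Z/p^n)) \simeq \mathbf{D}(A/p^n)$. What each buys: your argument needs the cited theorem only over $\zP$ but must additionally verify that the base equivalence is (symmetric) monoidal for the completed projective tensor product --- which is also implicitly needed by the paper for the rigidification in Corollary~\ref{cor:rigid-nuc} --- while the paper's argument is shorter because the cited theorem is already stated for general (noncommutative) $p$-adic Banach rings.

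One correction to your setup: in this proposition, and in the base-case equivalence, the inverse limit is the \emph{ordinary} limit (in $\mathrm{Pr}^L$, equivalently of underlying $\infty$-categories), not the limit in $\mathbf{Cat}_{\infty}^{\mathrm{dual}}$; the dualisable limit $\varprojlim^{\mathrm{dual}}$, which genuinely differs from the ordinary one and defines the nuclear category, enters only after rigidification in Corollary~\ref{cor:rigid-nuc}. So your identification of the plain limit with ``nuclear $\zP$-modules'' conflates the two statements. Fortunately this works in your favour: since all limits in your argument are ordinary ones, computed on underlying $\infty$-categories, the obstacle you flag (whether module formation commutes with limits ``in the dualisable setting'') does not actually arise, and the standard commutation of $\mathrm{Alg}/\mathrm{Mod}$ with limits of monoidal $\infty$-categories applies directly.
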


\begin{proof}
    By \cite{kelly2025localising}*{Theorem 5.24}, we have \[\mathbf{D}(\mathsf{Ban}_{\zP}^{\leq1})^\circ) \simeq \varprojlim \mathbf{D}(\mathbb{Z}/p^n).\] Since \(A\) is a bornological \(p\)-adic operator algebra, it is an algebra object in \(\mathbf{D}(\mathsf{Ban}_{\zP}^{\leq1})^\circ\). Applying the functor \(- \otimes_{\zP}^L A/p^n\) on the left hand side of the equivalence above yields functors into \(\mathbf{D}(A/p^n)\) for each \(n\), so that we obtain a canonical functor \[\mathbf{D}(\mathsf{Ban}_A^{\leq 1})^\circ \simeq \mathbf{Mod}_A(\mathbf{D}(\mathsf{Ban}_{\zP}^{\leq1})^\circ) \to \varprojlim \mathbf{D}(A/p^n).\] Now use \cite{kelly2025localising}*{Theorem 5.24} for \(A\) in place of \(\zP\), which is in particular a \(p\)-adic Banach ring.
\end{proof}

By \cite{kelly2025localising}*{Corollary 5.23}, the category \(\mathbf{Mod}_A(\mathbf{D}(\mathsf{Ban}_{\zP}^{\leq 1})^\circ)\) is a reflective subcategory of \(\mathbf{D}(\mathsf{Ban}_R^{\leq 1})\), so that it is presentable. It is, however, not yet a dualisable category; this needs to be forced. Given a closed symmetric monoidal stable presentable \(\infty\)-category \(\mathbf{C}\), there is a rigid (in particular dualisable) category \(\mathbf{C}^{\mathrm{rig}}\) and a functor \(\mathbf{C}^{\mathrm{rig}}\) exhibiting the category of rigid categories as a coreflective subcategory of the category \(\mathbf{CAlg}(\mathsf{Pr}_{st}^L)\) of commutative algebra objects in the category of presentable stable \(\infty\)-categories with left adjoint functors as morphisms, equipped with the Lurie tensor product. We do not provide further details of this construction, and refer the reader to \cite{nkp}*{Theorem 4.4.17}. In our case, the rigidification simplifies considerably to yield the following:

\begin{corollary}\label{cor:rigid-nuc}
    Let \(A\) be a bornological \(p\)-adic operator algebra. We have an equivalence \[\mathbf{Mod}_A(\mathbf{D}^(\mathsf{Ban}_{\zP}^{\leq1})^\circ)^{\mathrm{rig}} \to \varprojlim^{\mathrm{dual}} \mathbf{D}(A/p^n),\] where the right hand side denotes the inverse limit taken in the category of dualisable categories. 
\end{corollary}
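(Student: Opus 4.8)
The plan is to obtain the equivalence by applying rigidification to the presentable-level identification already established in Proposition~\ref{prop:inverse-lim}, and then checking that rigidification is compatible with the cofiltered inverse limit. I would first recall that Proposition~\ref{prop:inverse-lim} gives an equivalence
\[
\mathbf{Mod}_A(\mathbf{D}(\mathsf{Ban}_{\zP}^{\leq1})^\circ) \simeq \varprojlim_n \mathbf{D}(A/p^n),
\]
where the limit is taken in $\mathsf{Pr}_{st}^L$ (equivalently, among presentable module categories over the rigid commutative base $\mathbf{D}(\mathsf{Ban}_{\zP}^{\leq1})^\circ \simeq \varprojlim_n \mathbf{D}(\Z/p^n)$). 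As observed before the statement, the left-hand side is presentable but not dualisable, and the function of the rigidification functor $(-)^{\mathrm{rig}}$ is exactly to repair this.

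The key step is to apply $(-)^{\mathrm{rig}}$ to both sides. By \cite{nkp}*{Theorem 4.4.17}, rigidification is the coreflector onto rigid categories, that is, the right adjoint to the inclusion of the rigid objects into $\mathbf{CAlg}(\mathsf{Pr}_{st}^L)$ (relative to the base $\mathbf{D}(\Z/p^n)$ in the noncommutative case). Being a right adjoint, it preserves limits, so it sends the presentable limit $\varprojlim_n \mathbf{D}(A/p^n)$ to the limit of the rigidifications $\mathbf{D}(A/p^n)^{\mathrm{rig}}$ computed internally to rigid categories. I would then note that each factor is already a fixed point of rigidification: over the discrete ring $A/p^n$ the category $\mathbf{D}(A/p^n)$ is compactly generated by the perfect complexes and is dualisable over $\mathbf{D}(\Z/p^n)$, so $\mathbf{D}(A/p^n)^{\mathrm{rig}} \simeq \mathbf{D}(A/p^n)$. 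Together these give
\[
\mathbf{Mod}_A(\mathbf{D}(\mathsf{Ban}_{\zP}^{\leq1})^\circ)^{\mathrm{rig}} \simeq \varprojlim_n^{\mathrm{rig}} \mathbf{D}(A/p^n).
\]

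It remains to identify the right-hand side with the inverse limit $\varprojlim_n^{\mathrm{dual}} \mathbf{D}(A/p^n)$ taken in dualisable categories. Since rigid categories are in particular dualisable and the relevant cofiltered limit along the base-change functors is computed compatibly in both settings, the rigid and dualisable limits coincide, completing the argument. The main obstacle is precisely this last compatibility: the presentable limit is genuinely not dualisable, so $\varprojlim^{\mathrm{dual}}$ cannot be read off from $\mathsf{Pr}_{st}^L$, and one has to verify that rigidifying the presentable limit reproduces the limit formed internally to $\mathbf{Cat}_\infty^{\mathrm{dual}}$. A secondary technical point, which I would address at the outset, is that \cite{nkp} phrases rigidification for commutative algebra objects while $A$ may be noncommutative; this is handled by working relative to the commutative rigid base $\mathbf{D}(\Z/p^n)$ (resp. $\mathbf{D}(\mathsf{Ban}_{\zP}^{\leq1})^\circ$), so that $\mathbf{Mod}_A$ and the $\mathbf{D}(A/p^n)$ are module categories over a rigid base and the relative form of the rigidification coreflection applies.
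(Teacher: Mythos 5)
Your proposal matches the paper's proof essentially verbatim: the paper likewise applies the rigidification functor to the equivalence of Proposition~\ref{prop:inverse-lim}, uses that the categories $\mathbf{D}(A/p^n)$ are already rigid, and invokes the coreflection (right-adjoint) property of $(-)^{\mathrm{rig}}$ relative to $\mathbf{CAlg}(\mathsf{Pr}_{st}^L)$ to identify the rigidified limit with the inverse limit taken in $\mathbf{Cat}_{\infty}^{\mathrm{dual}}$. The two points you single out for extra care --- the comparison of the rigid and dualisable limits, and the fact that a noncommutative $A$ forces one to work relative to the commutative rigid base --- are precisely the steps the paper's own proof leaves implicit, so your treatment is, if anything, slightly more careful.
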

\begin{proof}
    Using that the categories \(\mathbf{D}(A/p^n)\) are already rigid, applying the rigidification functor to the equivalence in Proposition \ref{prop:inverse-lim}, and using that the rigidification functor is right adjoint to the inclusion of rigid categories in \(\mathbf{CAlg}(\mathsf{Pr}_{st}^L)\), we get an inverse limit of dualisable categories taken in \(\mathbf{Cat}_{\infty}^{\mathrm{dual}}\).  
\end{proof}

Following \cite{nkp}, we denote the dualisable inverse limit \[\widetilde{\mathbf{Nuc}}(\hat{A}) \defeq \varprojlim^{\mathrm{dual}} \mathbf{D}(A/p^n)\] and refer to the resulting category as the category of \emph{nuclear} \(A\)-modules. Note that this definition makes sense for any \(\zP\)-algebra; the conclusion of Corollary \ref{cor:rigid-nuc} is only claimed for bornological \(p\)-adic operator algebras. For what follows, we abbreviate \(\mathsf{Nuc}(A) \defeq \mathbf{Mod}_A(\mathbf{D}^(\mathsf{Ban}_{\zP}^{\leq1})^\circ)^{\mathrm{rig}}\).

For a bornological \(p\)-adic operator algebra \(A\), we now define \[K^{\mathrm{an}}(A) \defeq K^{\mathrm{cont}}(\mathsf{Nuc}(A))\] as in \cites{efimov2025localizing,kelly2025localising}. Note that to extend this definition to all \(p\)-adic operator algebras (and not just the bornological ones), we need to take \(A\)-modules internal to the larger category \(\mathsf{CBorn}_R\) of complete bornological \(\zP\)-modules. We expect that the rigidification of the derived \(\infty\)-category of complete bornological \(A\)-modules is equivalent to the category \(\widetilde{\mathsf{Nuc}}(A)\); this will be the subject of a forthcoming article. 

\begin{theorem}
   Let \(A\) be a bornological \(p\)-adic operator algebra. Then we have an equivalence \[K^{\mathrm{an}}(A) \simeq \varprojlim K(A/p^n)\] of spectra.
\end{theorem}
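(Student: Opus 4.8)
The plan is to unwind the definition of \(K^{\mathrm{an}}(A)\) and reduce the statement to the behaviour of continuous localizing invariants under dualisable limits. Since \(A\) is a bornological \(p\)-adic operator algebra it is in particular a torsionfree \(p\)-adically complete \(\zP\)-algebra, so the tower \(\cdots \to \mathbf{D}(A/p^{n+1}) \to \mathbf{D}(A/p^n) \to \cdots\) of base-change functors is a well-defined inverse system of dualisable (indeed compactly generated) categories. By definition \(K^{\mathrm{an}}(A) = K^{\mathrm{cont}}(\mathsf{Nuc}(A))\), and Corollary~\ref{cor:rigid-nuc} identifies \(\mathsf{Nuc}(A)\) with \(\widetilde{\mathbf{Nuc}}(\hat A) = \varprojlim^{\mathrm{dual}} \mathbf{D}(A/p^n)\), the inverse limit taken in \(\mathbf{Cat}_{\infty}^{\mathrm{dual}}\). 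Thus it suffices to compute \(K^{\mathrm{cont}}\bigl(\varprojlim^{\mathrm{dual}} \mathbf{D}(A/p^n)\bigr)\).

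The core step is to commute \(K^{\mathrm{cont}}\) past the dualisable limit. By Efimov's theory \cite{efimov2024k} (see also \cites{efimov2025localizing,nkp}), the continuous extension of a localizing invariant sends such a cofiltered dualisable limit of dualisable categories to the corresponding limit of spectra, giving
\[
K^{\mathrm{cont}}\Bigl(\varprojlim^{\mathrm{dual}} \mathbf{D}(A/p^n)\Bigr) \simeq \varprojlim_n K^{\mathrm{cont}}\bigl(\mathbf{D}(A/p^n)\bigr).
\]
It then remains to identify each term. Each \(\mathbf{D}(A/p^n)\) is compactly generated with compact objects the perfect complexes, i.e.\ \(\mathbf{D}(A/p^n) \simeq \mathrm{Ind}(\mathrm{Perf}(A/p^n))\); since the continuous extension satisfies \(E^{\mathrm{cont}} \circ \mathrm{Ind} \simeq E\) (the commuting triangle recalled above), we obtain \(K^{\mathrm{cont}}(\mathbf{D}(A/p^n)) \simeq K(\mathrm{Perf}(A/p^n)) \simeq K(A/p^n)\). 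Assembling these equivalences compatibly over the tower yields \(K^{\mathrm{an}}(A) \simeq \varprojlim_n K(A/p^n)\), as claimed.

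The main obstacle is the middle step: justifying that \(K^{\mathrm{cont}}\) genuinely commutes with the limit \(\varprojlim^{\mathrm{dual}}\). This is exactly the feature for which the dualisable (rather than presentable) limit is used — one must verify that the transition functors \(\mathbf{D}(A/p^{n+1}) \to \mathbf{D}(A/p^n)\) are strongly continuous, so that the limit is formed in \(\mathbf{Cat}_{\infty}^{\mathrm{dual}}\) along morphisms of the correct type, and then invoke Efimov's limit-preservation theorem for continuous localizing invariants. One should also confirm that the equivalences \(K^{\mathrm{cont}}(\mathbf{D}(A/p^n)) \simeq K(A/p^n)\) are natural in \(n\), so that they assemble into an equivalence of towers and the resulting limit of spectra is precisely \(\varprojlim_n K(A/p^n)\) with transition maps induced by the reductions \(A/p^{n+1} \twoheadrightarrow A/p^n\); any subtlety involving a derived \(\lim^1\) term is already absorbed into the spectral inverse limit on the right-hand side.
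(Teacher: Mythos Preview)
Your overall architecture matches the paper's: use Corollary~\ref{cor:rigid-nuc} to identify \(\mathsf{Nuc}(A)\simeq\varprojlim^{\mathrm{dual}}\mathbf{D}(A/p^n)\), then compute \(K^{\mathrm{cont}}\) of this dualisable limit. You also correctly flag the middle step as the crux. But you misidentify what is required to justify it, and this is a genuine gap.

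You propose that it suffices to check the transition functors \(\mathbf{D}(A/p^{n+1})\to\mathbf{D}(A/p^n)\) are strongly continuous, and then ``invoke Efimov's limit-preservation theorem for continuous localizing invariants.'' Strong continuity is automatic here (base change has a colimit-preserving right adjoint, namely restriction of scalars), and merely ensures the tower lives in \(\mathbf{Cat}_\infty^{\mathrm{dual}}\). But there is no general theorem saying \(K^{\mathrm{cont}}\) preserves arbitrary cofiltered limits in \(\mathbf{Cat}_\infty^{\mathrm{dual}}\); the relevant result, \cite{efimov2025localizing}*{Corollary~6.2}, requires the tower to be a \emph{Mittag--Leffler system} in the sense of \cite{efimov2025localizing}*{Definition~5.1}. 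Concretely, one must show that for each \(k\ge1\) the pro-object \(\bigl(A/p^k\otimes^L_{A/p^n}A/p^k\bigr)_{n>k}\) is essentially constant in \(\mathbf{Perf}\) of \(A/p^k\)-bimodules. This is the actual content of the paper's proof: using the \(2\)-periodic free resolution \(\cdots\to A/p^n\xrightarrow{p^{n-k}}A/p^n\xrightarrow{p^k}A/p^n\to A/p^k\to0\), one computes the derived tensor product and checks that the transition maps in the resulting pro-system make it essentially constant with value \(A/p^k[0]\oplus A/p^k[1]\). This is the noncommutative analogue of \cite{efimov2025localizing}*{Proposition~5.23}, and it is the step your argument is missing. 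Once Mittag--Leffler is verified, \cite{efimov2025localizing}*{Corollary~6.2} gives the desired equivalence directly; the naturality and \(\lim^1\) concerns you raise at the end are not where the difficulty lies.
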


\begin{proof}
    Under the hypotheses on \(A\), by Corollary \ref{cor:rigid-nuc}, we have an equivalence \(\mathsf{Nuc}(A) \simeq \widetilde{\mathsf{Nuc}}(A)\). We need to check that the inverse system \((\mathbf{D}(A/p^n))_{n \geq 1}\) is a Mittag-Leffler system in the sense of \cite{efimov2025localizing}*{Definition 5.1}, which is essentially \cite{efimov2025localizing}*{Proposition 5.23}; we simply show that a very similar proof works in the noncommutative setting. Indeed, the Mittag-Leffler condition is equivalent to verifying that for each \(k\geq 1\), the inverse system \((A/p^k \otimes_{A_{p^n}}^L A/p^k)_{n>k}\) is essentially constant in the derived \(\infty\)-category of \(A/p^k\)-bimodules. A straightforward \(\mathrm{Tor}\)-computation shows that \[A/p^k \otimes_{A_{p^n}}^L A/p^k \simeq (\cdots \overset{0}\to A/p^k \overset{0}\to A/p^k \cdots \overset{0}\to A/p^k),\] using the \(2\)-periodic resolution \(A/p^n \overset{p^k}\to A/p^n \overset{p^{n-k}}\to A/p^n \overset{p^k}\to A/p^n \to A/p^k \to 0\) of \(A/p^k\) by free (right) \(A/p^n\)-modules. Using the same resolution, the structure maps \(A/p^k \otimes_{A/p^{n+1}}^L A /p^k \to A/p^k \otimes_{A/p^{n}}^L A /p^k \) are given by 
    \[
    \begin{tikzcd}
      \cdots \arrow{r}{0} &  A/p^k \arrow{r}{0} \arrow{d}{0} & A/p^k \arrow{r}{0} \arrow{d}{0} & A/p^k \arrow{r}{0} \arrow{d}{1} & A/p^k \arrow{d}{1}\\
      \cdots \arrow{r}{0} &  A/p^k \arrow{r}{0} \arrow{d}{0} & A/p^k \arrow{r}{0} \arrow{d}{0} & A/p^k \arrow{r}{0} \arrow{d}{1} & A/p^k \arrow{d}{1} \\
      \cdots \arrow{r}{0} &  A/p^k \arrow{r}{0}   & A/p^k \arrow{r}{0} & A/p^k \arrow{r}{0} & A/p^k, 
    \end{tikzcd}
    \] whose homotopy inverse limit is the chain complex \(A/p^k[0] \oplus A/p^k[1]\), which manifestly lies in \(\mathbf{Perf}(A/p^k)\) and is essentially constant as a pro-system. Consequently, we have a Mittag-Leffler system of \(A\)-linear categories. The conclusion now follows from \cite{efimov2025localizing}*{Corollary 6.2}.
\end{proof}

\begin{bibdiv}
\begin{biblist}

\bib{BGM:padicI}{article}{
   author={Buss, Alcides},
   author={Garcia, Luiz},
   author={Mukherjee, Devarshi},
   title={Operator algebras over the $p$-adic integers},
   journal={Trans. Amer. Math. Soc.},
   volume={378},
   date={2025},
   number={10},
   pages={7371--7427},
   issn={0002-9947},
   review={\MR{4956376}},
   doi={10.1090/tran/9474},
}

\bib{claussnitzer-thesis}{thesis}{
      author={Clau\ss{}nitzer, Anton},
       title={Aspects of p-adic operator algebras},
        type={Ph.D. Thesis},
        date={2018},
}

\bib{Clausnitzer-Thom:p-adic}{article}{
   author={Clau\ss nitzer, Anton},
   author={Thom, Andreas},
   title={Aspects of $p$-adic operator algebras},
   journal={M\"unster J. Math.},
   volume={13},
   date={2020},
   number={2},
   pages={425--444},
   issn={1867-5778},
   review={\MR{4130688}},
}

\bib{Cortinas-Meyer-Mukherjee:NAHA}{article}{
  title={Non-Archimedean analytic cyclic homology},
  author={Corti\~nas, Guillermo},
  author={Meyer, Ralf},
  author={\textbf{Mukherjee, Devarshi}},
  journal={Doc. Math.},
  volume={25},
  date={2020},
  pages={1353--1419},
  issn={1431-0635},
  review={\MR{4164727}},
  doi={10.25537/dm.2020v25.1353-1419},
}

\bib{efimov2024k}{article}{
  title={K-theory and lozalizing invariants of large categories},
  author={Efimov, Alexander I},
  journal={arXiv preprint arXiv:2405.12169},
  year={2024}
}

\bib{efimov2025localizing}{article}{
  title={Localizing invariants of inverse limits},
  author={Efimov, Alexander I},
  journal={arXiv preprint arXiv:2502.04123},
  year={2025}
}

\bib{higson1988algebraic}{article}{
  title={Algebraic K-theory of stable {$C^*$}-algebras},
  author={Higson, Nigel},
  journal={Advances in Mathematics},
  volume={67},
  number={1},
  pages={1--140},
  year={1988},
  publisher={Elsevier}
}

\bib{kelly2025localising}{article}{
  title={Localising invariants in derived bornological geometry},
  author={Kelly, Jack},
  author={Mukherjee, Devarshi},
  journal={arXiv preprint arXiv:2505.15750},
  year={2025}
}

\bib{Lang:Algebra}{book}{
   author={Lang, Serge},
   title={Algebra},
   series={Graduate Texts in Mathematics},
   volume={211},
   edition={3},
   publisher={Springer-Verlag, New York},
   date={2002},
   pages={xvi+914},
   isbn={0-387-95385-X},
   review={\MR{1878556}},
   doi={10.1007/978-1-4613-0041-0},
}

\bib{mukherjee2025nonarchimedea}{article}{
  title={Nonarchimedean bivariant K-theory.},
  author={Mukherjee, Devarshi},
  journal={Journal of Noncommutative Geometry},
  volume={19},
  number={1},
  year={2025}
}

\bib{nkp}{article}{
  title={Sheaves on manifolds},
  author={Krause, Achim},
  author={Nikolaus, Thomas},
  author={P{\"u}tzst{\"u}ck, Phil},
  journal={Available at author’s webpage},
  year={2024}
}

\bib{Steinberg:Simplicity}{article}{
   author={Steinberg, Benjamin},
   title={Simplicity, primitivity and semiprimitivity of \'etale groupoid
   algebras with applications to inverse semigroup algebras},
   journal={J. Pure Appl. Algebra},
   volume={220},
   date={2016},
   number={3},
   pages={1035--1054},
   issn={0022-4049},
   review={\MR{3414406}},
   doi={10.1016/j.jpaa.2015.08.006},
}

\bib{weibel1989homotopy}{incollection}{
      author={Weibel, Charles~A.},
       title={Homotopy algebraic {$K$}-theory},
        date={1989},
   booktitle={Algebraic {$K$}-theory and algebraic number theory ({H}onolulu,
  {HI}, 1987)},
      series={Contemp. Math.},
      volume={83},
   publisher={Amer. Math. Soc., Providence, RI},
       pages={461\ndash 488},
         url={https://doi.org/10.1090/conm/083/991991},
      review={\MR{991991}},
}
\end{biblist}
\end{bibdiv}s

\end{document}